\newcommand{\xycenter}[1]{
	\begin{center}
	\mbox{\xymatrix{#1}}
	\end{center}
	}
\theoremstyle{plain}
\newtheorem{theorem}{Theorem}[section]
\newtheorem{proposition}[theorem]{Proposition}
\newtheorem{lemma}[theorem]{Lemma}
\newtheorem{problem}[theorem]{Problem}
\theoremstyle{definition}
\newtheorem{definition}[theorem]{Definition}
\newtheorem{notation}[theorem]{Notation}
\newtheorem*{maintheorem}{Theorem \ref{tSaturatedPrelogY}}
\theoremstyle{remark}
\newtheorem{remark}[theorem]{Remark}
\newcommand{\ioUpper}[2]{\iota^*_{\{#1\}>\{#2\}}}
\newcommand{\ioLower}[2]{\iota_{\{#1\}>\{#2\}*}}
\newcommand{\sheaf}[1]{\mathscr{#1}}
\newcommand{\LL}{\sheaf{L}}
\newcommand{\OO}{\sheaf{O}}
\newcommand{\QQ}{\sheaf{Q}}
\newcommand{\MM}{\sheaf{M}}
\newcommand{\EE}{\sheaf{E}}
\newcommand{\NN}{\sheaf{N}}
\newcommand{\VV}{\sheaf{V}}
\newcommand{\ZZ}{\sheaf{Z}}
\newcommand{\YY}{\sheaf{Y}}
\newcommand{\XX}{\sheaf{X}}
\newcommand{\TT}{\sheaf{T}}
\newcommand{\Z}{\mathbb Z}
\newcommand{\N}{\mathbb N}
\newcommand{\C}{\mathbb C}
\renewcommand{\P}{\mathbb P}
\newcommand{\Q}{\mathbb Q}
\DeclareMathOperator{\coker}{\mathrm{coker}}
\DeclareMathOperator{\Chow}{\mathrm{CH}}
\DeclareMathOperator{\Num}{\mathrm{Num}}
\DeclareMathOperator{\Chownum}{\mathrm{Num}}
\DeclareMathOperator{\Chowprelognum}{\mathrm{Num}^*_{\mathrm{prelog}}}
\DeclareMathOperator{\Numsatprelog}{\mathrm{Num}_{\mathrm{prelog, sat}}}
\DeclareMathOperator{\weight}{wt}
\newcommand{\CH}{\mathrm{CH}}
\newcommand*{\DashedArrow}[1][]{\mathbin{\tikz [baseline=-0.25ex,-latex, dashed,#1] \draw [#1] (0pt,0.5ex) -- (2.3em,0.5ex);}}
\begin{document}


\title[Prelog Chow groups of self-products of cubic threefolds]{Prelog Chow groups of self-products of degenerations of cubic threefolds}



\author[B\"ohning]{Christian B\"ohning}
\address{Christian B\"ohning, Mathematics Institute, University of Warwick\\
Coventry CV4 7AL, United Kingdom}
\email{C.Boehning@warwick.ac.uk}

\author[von Bothmer]{Hans-Christian Graf von Bothmer}
\address{Hans-Christian Graf von Bothmer, Fachbereich Mathematik der Universit\"at Hamburg\\
Bundesstra\ss e 55\\
20146 Hamburg, Germany}
\email{hans.christian.v.bothmer@uni-hamburg.de}

\author[van Garrel]{Michel van Garrel}
\address{Michel van Garrel, School of Mathematics, University of Birmingham\\
Edgbaston,
Birmingham, B15 2TT,
United Kingdom}
\email{m.vangarrel@bham.ac.uk}

\date{\today}


\begin{abstract}
It is unknown whether smooth cubic threefolds have an (integral Chow-theoretic) decomposition of the diagonal, or whether they are stably rational or not in general. 
As a first step towards making progress on these questions, we compute the (saturated numerical) prelog Chow group of the self-product of a certain degeneration of cubic threefolds. 
\end{abstract}

\maketitle

\section{Introduction}\label{sIntroduction}
A large area within the study of the birational geometry of rationally connected varieties is concerned with varieties that are \emph{close} to projective space. This can for example be made precise as follows. 

\begin{definition}\label{def:rat}
Let $V$ be a variety over $\C$. $V$ is said to be 
\begin{itemize}
\item \emph{rational} if it is birational to a projective space $\P^{n}$,
\item \emph{stably rational} if $V\times\P^{m}$ is rational for some $m\geq0$,
\item \emph{unirational} if there is a dominant rational morphism $\P^{n}\DashedArrow V$ for some $n\geq0$.
\end{itemize}
\end{definition}

Note that $(\rm{rational}) \Rightarrow (\rm{stably \, \, rational}) \Rightarrow (\rm{unirational})$.
The only known method to separate rational varieties from strictly stably rational ones is the one given in  \cite{B-CT-S-S85}.  
There is however a powerful method principally developed by Voisin \cite{Voi15} and Colliot-Th\'{e}l\`{e}ne/Pirutka \cite{CT-P16} that can be employed to show that a rationally connected variety $V$ is not stably rational (i.e.\ is stably irrational). This makes use of the following notion.

\begin{definition}
Let $V$ be a smooth $n$-dimensional rationally connected projective variety over $\C$. 
We say that $V$ admits a \emph{Chow-theoretic decomposition of the diagonal} if one can write
\begin{equation}\label{eq:def:decomp}
[\Delta_V] = [V \times {\rm pt}] + [Z] \text{ in the Chow group } \CH_{n}(V\times V),
\end{equation}
where $\Delta_V$ is the diagonal, ${\rm pt}$ is a point of $V$ and $Z$ is a cycle supported on $D\times V$ for some divisor $D$ in $V$. Less restrictively, $V$ is said to admit a \emph{cohomological decomposition of the diagonal} if \eqref{eq:def:decomp} holds in cohomology with $\Z$-coefficients. 
\end{definition}

It follows right away that projective $n$-space $\P^n$ (or more generally any rational homogeneous variety) has a (Chow-theoretic) decomposition of the diagonal since the diagonal is rationally equivalent to a sum of products of Schubert varieties with their dual Schubert varieties; moreover, the existence of a decomposition of the diagonal is a stable birational invariant of smooth projective varieties. Note that while powerful, this is also a rather subtle invariant. Indeed, if $V$ is a unirational variety with unirational parametrisation $\P^n\DashedArrow V$ of degree $N$, then there is a decomposition
\begin{equation}\label{eq:rat-decomp}
N \, [\Delta_V] = N \, [V \times {\rm pt}] + [Z] \text{ in } \CH_n(V\times V),
\end{equation}
with $Z$ supported on $D\times V$ for $D$ a divisor in $V$. In fact, if one allows the passage to rational coefficients, any rationally connected variety admits such a rational decomposition of the diagonal. 

Voisin's method to prove stable irrationality of $V$ now proceeds by showing that a resolution of singularities of a suitable degeneration of $V$ with some mild class of singularities does not admit a decomposition of the diagonal (the original version required the degeneration to be integral and nodal, but this has subsequently been relaxed in several ways). In fact, the circumstance that in Voisin's method one has to restrict the singularities of the degeneration in some way is the primary obstacle to broadening its range of applications. 

Voisin's original method has been generalised in various ways subsequently, in each case mainly with the goal in mind to allow wider (more singular) classes of degenerations; here we just mention the articles \cite{CT-P16, To16, Schrei19-1, NiOt19, NiShi19} and references therein. However, these methods are so far not capable of shedding any light on the following problem.

\begin{problem}\label{pCubics}
Is a very general cubic hypersurface $V$ of dimension $n\ge 3$ stably irrational? Is a very general such cubic of dimension $n\ge 4$ non-rational? Are there any smooth cubic hypersurfaces of odd dimension $\ge 5$ that are rational?
\end{problem}

The answers to all of these questions are completely unknown; the most common viewpoint is probably that the answers to the first two questions should be yes, and the answer to the last question no. It is known that all smooth cubic hypersurfaces of dimensions $\ge 2$ are unirational and that in even dimensions there are some rational examples (e.g. those containing two disjoint linear subspaces of half their dimension, whence one can use the third intersection point map to get a rational parametrisation). Moreover, Clemens--Griffiths in \cite{CG72} prove that any smooth cubic threefold is non-rational. Denote by $J(V)$ the intermediate Jacobian of $V$ and let $\theta\in{\rm H}^2(J(V),\Z)$ be the class of the theta divisor of $J(V)$. $(J(V),\theta)$ is a principally polarised abelian variety. Then, more precisely, Clemens--Griffiths show that $(J(V),\theta)$ is not a product of Jacobians of curves from which irrationality of $V$ follows in a relatively straightforward way using weak factorisation of birational maps into blowups in smooth centres and the inverses of such maps. 

\medskip

To make progress on the problem whether (all, or very general) smooth cubic threefolds $V$ are stably irrational, Voisin in \cite{Voi13,Voi17} investigated the existence of a decomposition of the diagonal on them. In particular, she showed that some of them do admit a decomposition of the diagonal. Also, in Theorem 1.1 in \cite{Voi17} Voisin proves that $V$ admits a Chow-theoretic decomposition of the diagonal if and only if it admits a cohomological decomposition of the diagonal. However, the cubic threefolds that are known to admit a decomposition of the diagonal form a proper closed subset of the moduli space (by which we mean the quotient of the parameter space of smooth cubic threefolds by the action of projectivities). More precisely: 

\begin{theorem}[Voisin, Theorem 1.7 and Theorem 4.5 in \cite{Voi17}]
A smooth cubic threefold $V$ admits a decomposition of the diagonal if and only if the class $\theta^4/4!$ on $J(V)$ is algebraic. On the moduli space of smooth cubic threefolds, this algebraicity is satisfied (at least) on a countable union of proper closed subvarieties of codimension $\leq 3$.
\end{theorem}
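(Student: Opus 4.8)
The plan is to derive both assertions from Voisin's general criterion (from her paper on the Abel--Jacobi map, integral Hodge classes and decomposition of the diagonal) relating decompositions of the diagonal on a rationally connected threefold to integral Hodge classes on its intermediate Jacobian, and then to specialise the resulting condition to cubic threefolds. Recall that criterion: for a smooth projective threefold $X$ with $H^1(X,\OO_X)=H^2(X,\OO_X)=0$, $X$ admits a cohomological decomposition of the diagonal if and only if (i) $H^*(X,\Z)$ is torsion-free; (ii) $H^4(X,\Z)$ is generated by classes of algebraic curves; (iii) there is a \emph{universal} codimension-$2$ cycle $Z\in\CH^2(J(X)\times X)$, i.e.\ one inducing the identity on $J(X)=J^3(X)$ via the Abel--Jacobi map; and (iv) the minimal class $\theta^{g-1}/(g-1)!$ is algebraic on $J(X)$, where $g=\dim J(X)$. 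For cubic threefolds a cohomological decomposition of the diagonal moreover upgrades to a Chow-theoretic one (Voisin), so it suffices to treat the cohomological version.

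\emph{First part.} I would check that for \emph{every} smooth cubic threefold $V$ the first three conditions hold, so that the criterion collapses to (iv). Condition (i) is classical; (ii) holds because $H^4(V,\Z)\cong\Z$ is generated by the class of a line on $V$. Condition (iii) is the one substantive point: starting from the Fano surface of lines $F=F(V)$ and its universal line $P\subset F\times V$, one uses the Clemens--Griffiths identification $J(V)\cong\mathrm{Alb}(F)$ together with the fact that only finitely many lines of $V$ pass through a general point of $V$ (which yields a rational multisection of the Abel--Jacobi parametrisation of $1$-cycles by $J(V)$) to spread $P$ out to an honest cycle on $J(V)\times V$ and then correct it by a constant cycle so that it induces the identity on $J(V)$. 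Since $\dim J(V)=h^{2,1}(V)=5$ for a cubic threefold, the surviving condition (iv) is exactly the algebraicity of $\theta^4/4!$ on $J(V)$, which is the claimed equivalence.

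\emph{Second part.} The class $\theta^4/4!\in H^8(J(V),\Z)$ is of Hodge type $(4,4)$ for every $V$, so the content is the jump of algebraicity. That the locus $\mathcal{S}\subset\mathcal{C}$ (the moduli space of smooth cubic threefolds, $\dim\mathcal{C}=10$) on which $\theta^4/4!$ is algebraic is a countable union of closed subvarieties is a standard Hilbert-scheme argument: passing to a level cover over which the intermediate Jacobians form a family, the relative Chow variety of curves has countably many irreducible components, on each of which the cohomology class of the fibres is constant; the images in $\mathcal{C}$ (closed, by properness of the Chow variety) of those pairs of components whose difference realises $\theta^4/4!$ give closed subvarieties whose union is exactly $\mathcal{S}$. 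To exhibit a component of codimension $\leq 3$ I would use Torelli for cubic threefolds (Clemens--Griffiths, Tyurin), realising $\mathcal{C}$ as a $10$-dimensional immersed subvariety of $\mathcal{A}_5$ ($\dim=15$), and intersect it with the closure of the Jacobian locus $\mathcal{J}_5\subset\mathcal{A}_5$ ($\dim=12$): on the sublocus of $V$ with $J(V)\cong\mathrm{Jac}(C)$ for a smooth genus-$5$ curve $C$, Poincar\'e's formula gives $\theta^4/4!=[W_1(C)]=[C]$, which is algebraic. Since $10+12-15=7$, every non-empty component of this intersection has dimension $\geq 7$, hence codimension $\leq 3$ in $\mathcal{C}$.

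The two places where real work is needed are exactly the ones just flagged. In the first part it is the construction of the universal codimension-$2$ cycle over $J(V)$: the naive cycle coming from $F\times V$ lives over a surface, not over the $5$-dimensional $J(V)$, and making it genuinely universal requires controlling the rational multisection and the correcting constant cycle carefully — this is the heart of the argument. In the second part it is the \emph{non-emptiness} of the codimension-$\leq 3$ stratum: the dimension count only bounds the codimension of whatever components exist, so one still has to show that the cubic-threefold locus actually meets $\mathcal{J}_5$ inside $\mathcal{A}_5$ — for instance by degenerating to a one-nodal cubic threefold, whose generalised intermediate Jacobian is the Jacobian of the associated curve, and propagating this into the smooth locus, or by producing an explicit smooth cubic threefold whose intermediate Jacobian is a Jacobian.
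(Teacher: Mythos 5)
This theorem is not proved in the paper at all: it is quoted verbatim as background, with the proof delegated to Voisin \cite{Voi17}, so your proposal has to be measured against Voisin's original argument rather than anything in this text. Your first part does follow the right template (reduce to the criterion of \cite{Voi15}/\cite{Voi13}: torsion-free integral cohomology, $H^4$ generated by curve classes, existence of a universal codimension-$2$ cycle, algebraicity of the minimal class $\theta^4/4!$, plus the Chow=cohomological upgrade), but the one step you yourself call the heart of the matter is not actually supplied: through a general point of $V$ there pass six lines, so the naive spreading-out of the incidence correspondence $P\subset F(V)\times V$ over $J(V)$ produces a cycle inducing $6\cdot\mathrm{id}$ rather than $\mathrm{id}$ on $J(V)$, and no correction by a constant cycle can remove that multiplicative factor; the existence of a genuinely universal codimension-$2$ cycle on $J(V)\times V$ is a nontrivial theorem of Voisin in the same paper, not a routine consequence of the Fano-surface geometry as sketched.

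The second part contains a genuine error rather than just a gap. The locus on which you evaluate $\theta^4/4!$ by Poincar\'e's formula --- smooth cubic threefolds $V$ with $J(V)\cong\mathrm{Jac}(C)$ for a smooth genus-$5$ curve $C$ --- is empty: by Clemens--Griffiths \cite{CG72} (recalled in the introduction of this very paper), $(J(V),\theta)$ is never a Jacobian, nor even a product of Jacobians of curves, for any smooth cubic threefold; that is precisely their irrationality proof. Consequently the intersection of the $10$-dimensional Torelli-immersed cubic locus with the $12$-dimensional Jacobian locus inside $\mathcal{A}_5$ is empty, the inequality $\dim\geq 10+12-15=7$ applies to nothing, and the repairs you suggest cannot work: there is no explicit smooth cubic threefold whose intermediate Jacobian is a Jacobian, and a one-nodal degeneration leaves the moduli space of smooth cubics, so there is nothing to propagate back into the smooth locus. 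Voisin's proof of the codimension $\leq 3$ statement does not proceed by exhibiting smooth cubics whose intermediate Jacobian lies in the Jacobian locus; so as written your argument establishes neither the non-emptiness nor the codimension bound, and the second assertion remains unproved in your proposal.
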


Notice that this result leaves open the possibility that a very general cubic threefold $V$ does not admit a decomposition of the diagonal since these are proper subvarieties. 

\

In \cite{BBvG19-1} we started to develop a conceptual framework for an extension of Voisin's method that seeks to allow more singular, even reducible degenerations. The basic idea is that if $\pi \colon \VV \to B$ is a degeneration over a base $B$ with distinguished point $0\in B$, which we assume to be strictly semistable, then the existence of a decomposition of the diagonal on a very general fibre of $\VV$ will imply the existence of a \emph{log decomposition of the diagonal} on the central fibre $\VV_0$ of the family. The existence of such a log decomposition is a more stringent condition than the existence of a mere decomposition of the diagonal, hence it gives a more potent obstruction. The underlying intuition is that the notion of a log decomposition of the diagonal for $\VV_0$ should include that certain obstructions for the decomposition to deform in the family are already absent- thus a log decomposition of the diagonal should be something that ``carries within itself the potentiality to deform in the family to a decomposition on smooth fibres outside $0$". To make this idea precise takes some work and requires to assemble quite a bit of vocabulary- we refer the reader to \cite{BBvG19-1} and the recollection in Section \ref{sRecollection} for that. Also we feel that eventually the right theoretical setting should be the language of log structures in the sense of Fontaine-Illusie, Deligne-Faltings, K. Kato et al., but we have not yet succeeded in casting our ideas in this framework: this is the reason why we prefer to talk about prelog Chow rings, prelog Chow groups etc., the added ``pre" suggesting that this theory could be a first approximation to a better more powerful point of view. 

In any event, the upshot is that a prelog decomposition of the diagonal gives, in particular, a relation between cycle classes in a group which we called the saturated prelog Chow group associated to a degeneration in \cite{BBvG19-1}; there is a numerical version of this, which contains a homomorphic image of this saturated prelog Chow group, and which is better suited for concrete computations. There is also a prelog Chow ring (and a numerical version of it) associated to a given degeneration. What is necessary for the purposes of the present paper is recalled in Section \ref{sRecollection}.

In this article, we now consider a degeneration of a very general cubic threefold into the union of a hyperplane and a quadric in $\P^4$. The resulting family is not strictly semistable but can be made so after blowing up some non-Cartier divisors in the total space (that are components of the central fibre) and after potentially shrinking the base of the family. Taking the self-product of this family and birationally modifying it to make it again strictly semistable, we end up with a family $\YY \to B$ to which our theory applies, the very general fibre of which is the self-product of a very general smooth cubic threefold, and whose central fibre is simple normal crossing with four components described more precisely in Section \ref{sProductFamily}; in particular, all of those components are rational, and their numerical Chow groups are straightforward if somewhat tedious to compute. This allows us to eventually compute the saturated numerical prelog Chow group $\Num_{\mathrm{prelog, sat}}^3 (Y)$ in the relevant degree $3$ of the central fibre of this family. Our main result is then

\begin{maintheorem} \label{thm:main}
$\Num_{\mathrm{prelog, sat}}^3 (Y)\simeq \Z^6$ and is generated by the classes in Theorem \ref{tGeneratorsY} and a half of their sum. 
\end{maintheorem}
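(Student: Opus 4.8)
The plan is to compute $\Num_{\mathrm{sat,prelog},3}(Y)$ by first nailing down the prelog Chow group $\Chowprelog$ of $Y$ in the relevant degree, then passing to numerical equivalence, and finally taking the saturation. Since $Y$ is the central fibre of a strictly semistable modification of the product family of a degeneration of cubic threefolds into a hyperplane $\cup$ a quadric, I would begin by describing $Y$ explicitly as a normal crossing scheme: identify its irreducible components $Y_i$, the double (and higher) intersections $Y_{ij} = Y_i \cap Y_j$, and the combinatorial (dual complex) data. The prelog Chow group in degree $3$ is, by definition, the subgroup of $\bigoplus_i \CH_3(Y_i)$ consisting of tuples of cycles whose restrictions to each double locus $Y_{ij}$ agree (up to the appropriate sign/rational equivalence convention fixed in \cite{BBvG19-1}); so the first genuine task is to compute $\CH_3$ of each component and of each double locus, together with the restriction maps between them. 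Here the components are (blow-ups of) self-products and mixed products of $\P^3$, a quadric threefold, and lower-dimensional strata, so their Chow groups are known explicitly by projective bundle / blow-up formulas.

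Next I would organize this as a Mayer–Vietoris / equalizer computation: $\Chowprelog_3(Y) = \ker\bigl(\bigoplus_i \CH_3(Y_i) \to \bigoplus_{i<j} \CH_3(Y_{ij})\bigr)$ (with the correct signs), and then map everything down to numerical equivalence to get $\Chowprelognum_3(Y)$, which will be a finitely generated abelian group because each $\Num_3(Y_i)$ is. The explicit generators should be exactly the classes assembled in Theorem \ref{tGeneratorsY} — these are cycles on the ambient product family that restrict compatibly, e.g. the diagonal, graphs of natural correspondences, products of lines and planes, etc. — and the claim is that these generate a lattice isomorphic to $\Z^6$ once one adjoins the half-sum class. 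The half-sum enters because the sum of two of the listed generators is divisible by $2$ in the saturated group: this is the characteristic feature of saturation, and I would detect it by finding an explicit cycle (or relation in the prelog group) witnessing that divisibility, most likely coming from a $2$-torsion phenomenon in one of the double loci or from the involution exchanging the two factors.

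The computation then splits into: (i) showing the six listed classes plus the half-sum span $\Numsatprelog_3(Y)$, i.e. surjectivity, which amounts to checking that any compatible system of numerical classes on the components is an integer combination of these after allowing the one halving; and (ii) showing they are independent and that the resulting lattice is exactly $\Z^6$ with no further saturation needed, i.e. injectivity plus primitivity. Part (i) is the bookkeeping-heavy direction: one must solve the compatibility linear system over $\Z$ on the full list of strata. Part (ii) requires computing an intersection-type pairing (or the structure of the cokernel of the restriction maps) to certify the rank is $6$ and that no additional divisibilities occur beyond the half-sum.

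The main obstacle I expect is the saturation step — controlling precisely which half-integral combinations of the geometric generators actually lie in $\Numsatprelog_3(Y)$, and conversely proving that no further denominators appear. Unlike the raw prelog group, whose kernel description is mechanical, the saturation is sensitive to torsion in the Chow/numerical groups of the double loci and to the signs in the gluing conditions; a single overlooked $2$-torsion class could change the answer from $\Z^6$ to a group with a different index. So the crucial, delicate part of the argument is the exhaustive analysis of the restriction maps to all the $Y_{ij}$ (and the triple loci) at the level of numerical equivalence, verifying that the only divisibility forced on the lattice generated by the classes of Theorem \ref{tGeneratorsY} is the single halving of their sum.
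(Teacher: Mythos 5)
There is a genuine gap, and it is definitional rather than computational: you describe the prelog Chow group as the subgroup of $\bigoplus_i \Num_3(Y_i)$ consisting of tuples whose restrictions to the double loci agree. That is only the ring of compatible classes $R(Y)$ of Section \ref{sRecollection}, i.e.\ the kernel of the pullback map $\rho$. The group $\Num^3_{\mathrm{prelog}}(Y)$ of Definition \ref{dBothmer} is not this kernel but its \emph{image} in $\coker(\delta)$, where $\delta\colon \bigoplus\Chownum^2(Y_{ij})\to\bigoplus\Chownum^3(Y_i)$ is the signed pushforward from the double loci, and the saturated group $\Num_{\mathrm{sat, prelog}, 3}(Y)$ is by definition the saturation of that image inside the lattice $\coker(\delta)/(\mathrm{torsion})$. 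Omitting this quotient changes the answer drastically: in the case at hand $\rho$ is a map $\Z^{39}\to\Z^{32}$ of rank $22$, so the compatible classes form a lattice of rank $17$, while the prelog group is the image of this $\Z^{17}$ in $\coker(\delta)\simeq\Z^{17}\oplus\Z/2$ under an explicit $17\times 17$ matrix of rank $6$ (Theorem \ref{tCubicThreefoldDegeneration}). Your equalizer/Mayer--Vietoris computation would therefore produce a rank-$17$ group, not $\Z^6$, and the subsequent saturation analysis would be carried out in the wrong lattice.

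Once the correct subquotient is in place, the remainder of your plan is close to what the paper does: the six cycles of Theorem \ref{tGeneratorsY} satisfy the prelog condition and map to a $\Z$-basis of $\Num^3_{\mathrm{prelog}}(Y)$ modulo torsion; expressing them in a basis of $\coker(\delta)/(\mathrm{torsion})\simeq\Z^{17}$ yields a $17\times 6$ integer matrix whose maximal minors have gcd $2$, and whose reduction mod $2$ has kernel generated by the sum of \emph{all six} generators (not of two of them, as you write), so the saturation adjoins exactly one new class, half of the total sum. Two further cautions: the claim that the Chow groups of the components are ``known explicitly by blow-up formulas'' fails for $Y_1=L_C\times L_C$, whose cycle groups involve $C\times C$; one must work modulo numerical equivalence and invoke the very-generality statement of Proposition \ref{pCxC} (simple Jacobian, so $\Num^1(C\times C)$ has rank $3$) to obtain finite-rank lattices at all. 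Finally, both the rank computation and the divisibility-by-$2$ certification are carried out in the paper by explicit integral linear algebra in Macaulay2 \cite{BBG-M2}; your outline would need an equivalent explicit computation, since nothing in the abstract setup rules out further divisibilities a priori.
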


This is clearly just a very first step towards deciding the existence or non-existence of prelog decompositions of the diagonal on the central fibre of the degeneration under consideration. We like to see the main accomplishment of this article in the fact that one can actually do computations in a highly nontrivial case with the theoretical framework and notions introduced in \cite{BBvG19-1}, and secondly, that the numerical prelog Chow ring and saturated numerical prelog Chow group of the central fibre, however naive their definition may seem, do carry a great deal of information about and are a rather accurate reflection of the Chow group of the very general fibre of the family, compare Remark \ref{rVoisin}, Theorem \ref{tGeneratorsY}, Proposition \ref{pSpecialisationCyclesOnVxV} and Theorem \ref{tSaturatedPrelogY}. 

\

We write down explicit generators for both $\Num_{\mathrm{prelog}}^3(Y)/({\rm torsion})$ as well as $\Num_{\mathrm{prelog, sat}}^3 (Y)$ in Theorems \ref{tGeneratorsY} and \ref{tSaturatedPrelogY}. Checking these are indeed generators relies on computations \cite{BBG-M2} that we perform using the algebraic geometry software system Macaulay2 (M2). Additional calculations using M2 involve finding an explicit description for a certain numerical Chow ring in Proposition \ref{pGeneratorsY2}, solving linear equations in Lemma \ref{lPullbackD} and computing the rank of a certain matrix in Theorem \ref{tCubicThreefoldDegeneration}. We describe how these computations work in the course of the proofs of the corresponding results. Our code \cite{BBG-M2} contains extensive comments describing the computations. Instructions on how to install and use M2 are found on \url{www.macaulay2.com}. There is a web interface \url{www.unimelb-macaulay2.cloud.edu.au} where it can be tried out. In addition to \url{https://faculty.math.illinois.edu/Macaulay2/GettingStarted}, an extensive introduction to M2 is provided in the textbook \cite[Part 1]{E-G-S-S01}.

\section*{Acknowledgement}

We thank Lawrence Barrott, Jean-Louis Colliot-Th\'el\`ene, Alessio Corti, Mathieu Florence, Mark Gross, Helge Ruddat, Stefan Schreieder, Bernd Siebert and Claire Voisin for valuable discussions surrounding the ideas presented in this paper. This project has received funding from the European Union's Horizon 2020 research and innovation programme under the Marie Sklodowska-Curie grant agreement No 746554 and has been supported by Dr.\ Max R\"ossler, the Walter Haefner Foundation and the ETH Z\"urich Foundation. The first author would like to acknowledge the stimulating environment and discussions of the 2019 AIM Workshop Rationality Problems in Algebraic Geometry that were helpful for coming to terms with various aspects of the present paper.

\section{Recollection of prelog Chow rings and saturated prelog Chow groups}\label{sRecollection}

We work over the complex numbers $\C$ throughout. In this Section we give a summary of those concepts developed in \cite{BBvG19-1} that are used in this paper.

Let $X$ be a simple normal crossing variety with irreducible components $X_i$, $i\in I$, and normalisation $\nu \colon X^{\nu}\to X$. For a subset $A\subset I$ we put
\[
X_A := \bigcap_{i\in A} X_i. 
\]
To simplify notation, we write 
\[
X_{i_1 i_2\dots i_k} := X_{\{i_1, i_2, \dots , i_k\} } .
\]
In particular, the $X_{ij}=X_i\cap X_j$ are smooth since $X$ is simple normal crossing. We define the following ring:

\small
\begin{gather*}
R (X) := R_{\mathrm{num}} (X) := \Bigl\{ (\alpha_i) \in \bigoplus_i \mathrm{Num}^* (X_i)\, \Bigr| \Bigl. \,\forall\, i,j: \,\alpha_i\mid_{X_{ij}} = \alpha_j\mid_{X_{ij}} \: \mathrm{in}\: \mathrm{Num}^* (X_{ij}) \Bigr\} .
\end{gather*}
\normalsize
Here we simply write a restriction symbol to denote pull-backs to $X_{ij}$. We call the condition $\alpha_i\mid_{X_{ij}} = \alpha_j\mid_{X_{ij}}$ the \emph{prelog condition}, and the above ring the \emph{ring of compatible classes}. We will use the simpler notation $R (X)$ instead of $R_{\mathrm{num}} (X)$ below since numerical equivalence is all we will consider here. 

\medskip

The following condition holds for an snc variety $X$ that is smoothable with smooth total space.

\begin{definition}\label{dFriedman}
Let $X$ be a snc variety with at worst triple intersections. 
We say that $X$ satisfies the \emph{Friedman condition} if for every intersection $X_{ij}$ we have
\[
\NN_{X_{ij}/X_i}\otimes \NN_{X_{ij}/X_j} \otimes \OO (T) = \OO_{X_{ij}}.
\]
Here $T$ is the union of all triple intersections $X_{ijk}$ that are contained in $X_{ij}$.
\end{definition}

We will use this later on as a sanity for some of our computations.

\medskip

We denote by $\iota_{ A > B } \colon X_A \to X_B$ for $B \subset A \subset I$ the inclusion map. When enumerating the elements of the sets $A, B$ below we omit separating commas to simplify notation when there is no risk of confusion. Thus for example we write $\iota_{\{ij\}, \{j\}}$ when $A=\{i, j\}$, $B=\{j\}$.

\begin{definition}\label{dBothmer}
Let $X$ be an snc variety that has at worst triple intersections and satisfies the Friedman condition. Then we define $ \Chowprelognum (X)$ via the following  diagram:
 \xycenter{
  & 0 \ar[d] & 0 \ar[d] & \\
  & R(X) \ar[d] \ar[r] & \Chowprelognum (X) \ar[d] \ar[r] & 0 \\
\bigoplus \Chownum^*(X_{ij}) \ar[d]^-{\rho'} \ar[r]^\delta
&\bigoplus \Chownum^*(X_{i})  \ar[r]^-{} \ar[d]^-\rho
&\coker (\delta ) \ar[r]
&  0 \\
\bigoplus \Chownum^*(X_{ijk}) \ar[r]^-{\delta'}
&\bigoplus \Chownum^*(X_{ij})
 }
 
\newcommand{\II}{\{1,\dots,n\}}

Here the maps $\rho , \rho',\delta , \delta'$ are defined as follows, using the convention $a<b<c$, $i<j<k$:

\begin{align*}
\bigl(\delta(z_{ij})\bigr)_a 	
	&= \left\{ \begin{matrix}  
	     \ioLower{ij}{i}(z_{ij}) & \text{if $a=i$,} \\
	     -\ioLower{ij}{j}(z_{ij}) & \text{if $a=j$,} \\
	     0 & \text{otherwise.}
	\end{matrix} \right.\\
	\bigl(\rho(z_{i})\bigr)_{ab} 	
	&= \left\{ \begin{matrix}  
	     \ioUpper{ab}{i}(z_{i}) & \text{if $i=a$} \\
	     -\ioUpper{ab}{i}(z_{i}) & \text{if $i=b$} \\
	     0 & \text{otherwise}
	\end{matrix} \right. \\
	\bigl(\rho'(z_{ij})\bigr)_{abc} 	
	&= \left\{ \begin{matrix}  
	     \ioUpper{abc}{ij}(z_{ij}) & \text{if $(i,j)=(a,b)$} \\
	     -\ioUpper{abc}{ij}(z_{ij}) & \text{if $(i,j)=(a,c)$} \\
	     \ioUpper{abc}{ij}(z_{ij}) & \text{if $(i,j)=(b,c)$} \\
	     0 & \text{otherwise}
	\end{matrix} \right. \\
	\bigl(\delta'(z_{ijk})\bigr)_{ab} 	
	&= \left\{ \begin{matrix}  
	     -\ioLower{ijk}{ab}(z_{ijk}) & \text{if $(a,b)=(i,j)$} \\
	     \ioLower{ijk}{ab}(z_{ijk}) & \text{if $(a,b)=(i,k)$} \\
	     -\ioLower{ijk}{ab}(z_{ijk}) & \text{if $(a,b)=(j,k)$} \\
	     0 & \text{otherwise}
	\end{matrix} \right. 
\end{align*}
 Notice that being in the kernel of $\rho$ amounts to the prelog condition. The fact that the lower left hand square commutes is proven in \cite[Prop. 2.9]{BBvG19-1}. This is the only place where the Friedman condition is used, and that part of the diagram is used only as a sanity check later. 
  
 We define the \emph{saturated numerical prelog Chow group} $\Numsatprelog^* (X)$ as the saturation of $ \Chowprelognum (X)$ in the lattice $\coker (\delta )/({\rm torsion})$.  \end{definition}
 
 Note that $\bigoplus \Chownum^*(X_{i})$ is naturally an $R(X)$-module ($R(X)$ is a subring), and also $\bigoplus \Chownum^*(X_{ij})$ is an $R(X)$-module: if $\alpha= (\alpha_i)\in R(X)$, and $z_{ij}\in \Chownum^*(X_{ij})$ we define $\alpha \cdot z_{ij} := \iota_{\{i, j\} >\{i\}}^* (\alpha_i).z_{ij} = \iota_{\{i, j\} >\{j\}}^* (\alpha_j).z_{ij}$ (the latter equality holds because of the prelog condition and shows that it does not matter of we restrict $\alpha_i$ or $\alpha_j$ to form the product). Thus every $\Chownum^*(X_{ij})$ is an $R(X)$-module, and thus so is the direct sum. 
 
 \begin{proposition}\label{pComparison}
 The map $\delta$ in the above diagram is an $R(X)$-module homomorphism, hence $\Chowprelognum (X)$ is naturally a quotient ring of $R(X)$. 
  \end{proposition}
 
 \begin{proof}
 It suffices to show that the map
 \[
 \delta\mid_{\Chownum^*(X_{ij})}\colon \Chownum^*(X_{ij}) \to \bigoplus \Chownum^*(X_{i})
 \]
 is an $R(X)$-module homomorphism. Then for a class $z=(z_i) \in \bigoplus \Chownum^* (X_i)$ and $y \in \Chownum^* (X_{ij})$ we have
\[
	z_i.(\iota_{\{i, j\} >\{i\}, *}(y)) = \iota_{\{i, j\} >\{i\}, *}(\iota_{\{i, j\} >\{i\}}^*(z_i).y)
\]
and similarly
\[
z_j.(\iota_{\{i, j\} >\{j\}, *}(y)) = \iota_{\{i, j\} >\{j\}, *}(\iota_{\{i, j\} >\{j\}}^*(z_j).y).
\]
From the way the $R(X)$-module structures on $ \Chownum^*(X_{ij})$ and $ \Chownum^*(X_{i})$ are defined, these relations mean precisely that $\Chownum^*(X_{ij}) \to \bigoplus \Chownum^*(X_{i})$ is an $R(X)$-module homomorphism. Note also that if $z=(z_i) \in R(X)$, then 
\[
\iota_{\{i, j\} >\{i\}}^*(z_i).y = \iota_{\{i, j\} >\{j\}}^*(z_j).y = z\cdot y.
\]
Thus we obtain in conclusion that 
\[
	\delta \colon  \bigoplus \Chownum(X_{ij}) \to  \bigoplus \Chownum(X_i)
\]
is an $R(X)$-module homomorphism, as desired. 
 \end{proof}

Given a strictly semistable degeneration $\pi \colon \XX \to C$ (strictly semistable=total space smooth+ central fibre reduced simple normal crossing) over some curve with marked point $t_0$ and $X \simeq \XX_{t_0}$, the specialisation homomorphism induces a natural homomorphism 
\[
\sigma_{\XX}\colon \mathrm{CH}_* (X_K) \to \Chowprelognum (X)
\]
(where $X_K$ is the generic fibre). This follows from \cite[Prop. 2.11]{BBvG19-1}.

\medskip

 If we consider a cover $C'\to C$ of smooth curves branched at $t_0$, the specialisation homomorphism $\sigma_{\XX'}$ of the pull-back family $\XX' =\XX\times_{C} C' \to C'$ (where we fix a distinguished point $t_0'$ in $C'$ mapping to $t_0$) gives a homomorphism into $ \Numsatprelog^* (X)$ by \cite[Prop. 4.2 and Prop. 4.4]{BBvG19-1}.

\section{Recollection of some formulas for Chow groups}\label{sRecollectionChow}

We now recall a few formulas for Chow rings of projective bundles and blowups needed in the sequel. For $X$ a smooth projective variety and $\EE$ a vector bundle of rank $r+1$ on $X$, we will denote by $\pi\colon \P (\EE ) \to X$ the projective bundle of lines in the fibres of $\EE$. 

\begin{proposition}\label{pChowRingProjectiveBundle}
With the previous notation let $\zeta$ be the first Chern class of the line bundle $\OO_{\P (\EE) }(1)$ in $\Chow^1(\P (\EE))$. Then as rings
\[
\Chow^*(\P (\EE ))\simeq \Chow^* (X)[\zeta]/(\zeta^{r+1} + c_{1}(\EE)\zeta^r + \dots + c_{r+1}(\EE ))
\]
where $c_i (\EE )$ are the Chern classes of $\EE$ and $\Chow^*(X)$ is considered as a subring of $\Chow^*(\P (\EE ))$ via the injective map $\pi^* \colon \Chow^*(X) \to \Chow^*(\P (\EE ))$. 
\end{proposition}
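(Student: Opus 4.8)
The plan is to run the classical two-step argument for the projective bundle formula (as in Grothendieck's original construction of Chern classes, or in Fulton's \emph{Intersection Theory}, Thm.~3.3). First I would prove the statement at the level of $\Chow^*(X)$-modules: the map
\[
\bigoplus_{i=0}^{r}\Chow^{*-i}(X)\longrightarrow\Chow^*(\P(\EE)),\qquad (a_0,\dots,a_r)\longmapsto\sum_{i=0}^{r}\pi^*(a_i)\,\zeta^i,
\]
is an isomorphism. Only afterwards would I extract the single defining ring relation and upgrade the module isomorphism to the asserted ring isomorphism.

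For the module statement I would argue by Noetherian induction on $X$. Choose a dense open $U\subseteq X$ over which $\EE$ trivializes, so that $\pi^{-1}(U)\cong\P^r_U$; there the formula holds by homotopy invariance of Chow groups combined with the computation of $\Chow^*(\P^r)$. Setting $Z=X\setminus U$ with its reduced structure, the localization exact sequence
\[
\Chow_*\bigl(\pi^{-1}(Z)\bigr)\longrightarrow\Chow_*\bigl(\P(\EE)\bigr)\longrightarrow\Chow_*\bigl(\pi^{-1}(U)\bigr)\longrightarrow 0,
\]
the projection formula for the proper inclusion $\pi^{-1}(Z)\hookrightarrow\P(\EE)$, and the inductive hypothesis applied to $Z$ (only the group-theoretic surjectivity statement is needed here, and it is valid over an arbitrary base) together show that $1,\zeta,\dots,\zeta^r$ generate $\Chow^*(\P(\EE))$ as a $\Chow^*(X)$-module. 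For injectivity I would use the Segre classes $s_k(\EE):=\pi_*(\zeta^{r+k})$, which satisfy $s_0(\EE)=1$ and $s_k(\EE)=0$ for $k<0$ (the latter because $\pi_*(\zeta^i)=0$ for $i<r$): applying $\pi_*\bigl(\zeta^{r-j}\cdot(-)\bigr)$ to a hypothetical relation $\sum_{i=0}^{r}\pi^*(a_i)\zeta^i=0$ and using the projection formula yields a system of equations that is lower-triangular in the $a_j$ with $1$'s on the diagonal, forcing $a_j=0$ for all $j$; in particular $\pi^*$ is injective.

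With the module statement available, $\zeta^{r+1}$ is uniquely of the form $-\pi^*(c_1)\zeta^r-\dots-\pi^*(c_{r+1})$ for certain classes $c_i\in\Chow^i(X)$; one either declares these to be the Chern classes $c_i(\EE)$ (Grothendieck's definition) or checks that they coincide with the usual ones via the splitting principle and the Whitney sum formula. Either way one obtains the identity
\[
\zeta^{r+1}+\pi^*c_1(\EE)\,\zeta^r+\dots+\pi^*c_{r+1}(\EE)=0 \quad\text{in } \Chow^{r+1}(\P(\EE)).
\]
Now the ring homomorphism $\Chow^*(X)[\zeta]\to\Chow^*(\P(\EE))$ sending $\zeta$ to $\zeta$ is surjective by the module statement, and it annihilates the monic polynomial $P(\zeta)=\zeta^{r+1}+c_1(\EE)\zeta^r+\dots+c_{r+1}(\EE)$ of degree $r+1$, hence factors through $\Chow^*(X)[\zeta]/(P(\zeta))$, which is $\Chow^*(X)$-free with basis $1,\zeta,\dots,\zeta^r$. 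The induced surjection $\Chow^*(X)[\zeta]/(P(\zeta))\twoheadrightarrow\Chow^*(\P(\EE))$ carries this basis to the basis $1,\zeta,\dots,\zeta^r$ of the target, so it is an isomorphism of $\Chow^*(X)$-algebras. The main obstacle is the freeness part of the module statement — the Noetherian induction through the localization sequence, together with the triangularity argument with Segre classes — after which the ring relation and the final comparison are purely formal.
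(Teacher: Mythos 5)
Your argument is correct, but there is nothing in the paper to compare it against line by line: the paper's entire proof is the citation ``This is \cite[Thm.~9.6]{E-H16}'', so you have in effect reconstructed the standard Grothendieck--Fulton proof that underlies that reference (Fulton, \emph{Intersection Theory}, Thm.~3.3 and Prop.~3.1). Your two-step structure is the right one, and you correctly flag the two places where care is needed: in the Noetherian induction the closed subset $Z$ need not be smooth, so one must phrase the generation statement for Chow groups of arbitrary schemes (with $\zeta$ acting as an operational class), which you do; and the triangularity argument for injectivity rests on $\pi_*(\zeta^i)=0$ for $i<r$ (dimension reasons) together with $s_0(\EE)=\pi_*(\zeta^r)=[X]$, the latter of which deserves a one-line justification (restrict to a fibre, or to the trivializing open set, and compute degrees), rather than being taken for granted. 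The passage from the module isomorphism to the ring isomorphism, via the monic relation $P(\zeta)=0$ and comparison of free bases over $\Chow^*(X)$, is exactly the formal step in the cited source; if you define the $c_i(\EE)$ by this relation you should note, as you do, that identifying them with the usual Chern classes requires the splitting principle and Whitney formula, or else the statement as used later in the paper (where Chern classes of explicit normal bundles are computed) would be circular. In short: correct, complete modulo the small standard facts just named, and faithful to the proof the paper delegates to the literature, so what you gain over the paper is self-containedness rather than a genuinely different method.
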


\begin{proof}
This is \cite[Thm. 9.6]{E-H16}.
\end{proof}

\begin{proposition}\label{pChowRingBlowUp}
Let $X$ be a smooth projective variety, $Z\subset X$ a smooth subvariety. Let $\pi\colon \mathrm{Bl}_Z X \to X$ be the blowup, $\NN=\NN_{Z/X}$ the normal bundle of $Z$ in $X$, $E=\P (\NN_{Z/X})\subset \mathrm{Bl}_Z X$ the exceptional divisor, and $i, j$ the natural inclusions as in the following diagram:
\[
\xymatrix{
E \ar[r]^{j}\ar[d]^{\pi_E} & \mathrm{Bl}_Z X\ar[d]^{\pi}\\
Z\ar[r]^{i} & X 
}
\]
Let $\zeta$ be the first Chern class of the line bundle $\OO_{\P (\NN_{Z/X})}(1)$ in $\Chow^1(E)$. 
Then $\Chow^* (\mathrm{Bl}_Z X )$ is generated by $\pi^*\Chow^* (X)$ and $j_* \Chow^* (E)$ as an abelian group. More precisely, there is an exact sequence of abelian groups 
\[
\xymatrix{
0 \ar[r] & \Chow^* (Z) \ar[r]^{\varphi \quad\quad\quad} & \Chow^* (E) \oplus \Chow^* (X) \ar[r]^{\quad\psi} & \Chow^* ( \mathrm{Bl}_Z X) \ar[r] & 0
}
\]
where 
\begin{gather*}
\varphi ( z) = \bigl(c_{m-1}(\QQ ) \pi_E^*(z),  -i_*(z) \bigr), \\
\psi (\gamma, \alpha ) = j_* (\gamma ) + \pi^* (\alpha )
\end{gather*}
and $m$ is the codimension of $Z$ in $X$, $\QQ$ is the universal quotient bundle on $E\simeq \P (\NN_{Z/X})$. Moreover,
\[
c_{m-1}(\QQ) = \zeta^{m-1} + c_1(\NN)\zeta^{m-2} + \dots + c_{m-1}(\NN) . 
\]
There are the following rules for multiplication:
\begin{align*}
\pi^*(\alpha)\cdot \pi^*(\beta) = \pi^*(\alpha\beta) &\quad \text{for $\alpha,\beta\in \Chow^* (X)$,}\\
\pi^*(\alpha) \cdot j_* (\gamma ) = j_* (\pi^*_Ei^* \alpha \cdot \gamma) &\quad \text{for $\alpha\in\Chow^* (X), \gamma\in \Chow^*(E)$,}\\
j_*\gamma\cdot j_*\delta = - j_* (\gamma\cdot \delta\cdot\zeta ) &\quad \text{for $\gamma, \delta\in \Chow^*(E)$.}
\end{align*}
\end{proposition}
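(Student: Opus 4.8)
The statement splits into two largely independent tasks: establishing the short exact sequence (the blow-up formula), and verifying the Chern-class identity for $c_{m-1}(\QQ)$ together with the three multiplication rules. The plan is to dispatch the purely formal part first and then attack the exactness, which is the real content. Two structural facts will be used repeatedly: since $\pi\circ j = i\circ\pi_E$ one has $j^*\pi^* = \pi_E^*i^*$, and since the exceptional divisor satisfies $\NN_{E/\mathrm{Bl}_Z X}\simeq\OO_E(-1)$ one has the self-intersection identity $j^*j_*(\gamma) = c_1(\OO_E(-1))\cdot\gamma = -\zeta\cdot\gamma$.

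For the Chern-class formula I would use the tautological sequence $0\to\OO_E(-1)\to\pi_E^*\NN\to\QQ\to 0$ on $E=\P(\NN_{Z/X})$, which gives $c(\QQ) = c(\pi_E^*\NN)/(1-\zeta)$; extracting the degree-$(m-1)$ part yields $c_{m-1}(\QQ)=\sum_{a=0}^{m-1}c_a(\NN)\zeta^{m-1-a}$, the asserted expression. The multiplication rules are then formal consequences of the projection formula: $\pi^*$ is a ring homomorphism, so $\pi^*(\alpha)\cdot\pi^*(\beta)=\pi^*(\alpha\beta)$; next $\pi^*(\alpha)\cdot j_*(\gamma) = j_*(j^*\pi^*(\alpha)\cdot\gamma) = j_*(\pi_E^*i^*(\alpha)\cdot\gamma)$ using $j^*\pi^* = \pi_E^*i^*$; and $j_*\gamma\cdot j_*\delta = j_*(\gamma\cdot j^*j_*\delta) = -j_*(\gamma\cdot\delta\cdot\zeta)$ using the self-intersection identity.

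For the exact sequence, surjectivity of $\psi$ (equivalently, the generation statement) follows from localization: $\mathrm{Bl}_Z X\setminus E\simeq X\setminus Z$, so comparing the two right-exact sequences $\Chow^*(E)\xrightarrow{j_*}\Chow^*(\mathrm{Bl}_Z X)\to\Chow^*(X\setminus Z)\to 0$ and $\Chow^*(Z)\xrightarrow{i_*}\Chow^*(X)\to\Chow^*(X\setminus Z)\to 0$ lets one correct any class on $\mathrm{Bl}_Z X$ by a $\pi^*$-class until it restricts to zero on $X\setminus Z$, hence lies in the image of $j_*$. Injectivity of $\varphi$ is immediate from $\pi_{E*}(c_{m-1}(\QQ)\cdot\pi_E^*z)=z$ (only the $\zeta^{m-1}$-term survives the fibrewise pushforward), so $c_{m-1}(\QQ)\pi_E^*z=0$ forces $z=0$. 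That $\psi\circ\varphi=0$ is the key blow-up formula $\pi^*i_*(z)=j_*(c_{m-1}(\QQ)\cdot\pi_E^*z)$.

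The main obstacle is this key formula together with middle exactness $\ker\psi\subseteq\im\varphi$. The key formula encodes how $\pi$ pulls back a cycle supported on $Z$, where $\pi^{-1}(Z)=E$ carries the excess bundle $\QQ$; I would prove it by deformation to the normal cone (reducing to the projectivized-bundle model) or by the excess-intersection formula. Granting it, middle exactness is a bookkeeping argument: given $j_*\gamma+\pi^*\alpha=0$, apply $\pi_*$ and use $\pi_*\pi^*=\id$ and $\pi_*j_*=i_*\pi_{E*}$ to get $\alpha=-i_*(z_0)$ with $z_0:=\pi_{E*}\gamma$; then apply $j^*$ and the self-intersection formulas to obtain $\zeta\gamma=-\pi_E^*(c_m(\NN)z_0)$. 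Expanding $\gamma=\sum_{k=0}^{m-1}\zeta^k\pi_E^*\beta_k$ in the projective-bundle basis of Proposition \ref{pChowRingProjectiveBundle} and using the relation $\zeta^m=-\sum_{a=1}^m c_a(\NN)\zeta^{m-a}$, this last equation becomes a triangular linear system forcing $\beta_k=c_{m-1-k}(\NN)\beta_{m-1}$, whence $\gamma=c_{m-1}(\QQ)\pi_E^*z_0$ and $(\gamma,\alpha)=\varphi(z_0)$. The delicate point throughout is that neither $j^*$ nor $\pi_*$ is injective, so one must combine both to recover $(\gamma,\alpha)$; the projective-bundle decomposition is what makes the combination rigid enough to conclude.
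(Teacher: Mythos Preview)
Your proof sketch is correct and is essentially the standard argument one finds in the references. The paper, however, does not prove this proposition at all: it is stated in a section titled ``Recollection of some formulas for Chow groups'' and the proof consists entirely of citations to \cite[Prop.~13.12, Thm.~13.14]{E-H16} and \cite[Prop.~6.7]{Ful98}. So there is nothing to compare at the level of argument---you have supplied a proof where the paper supplies a reference.

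For what it is worth, your outline matches the architecture of those sources: the key formula $\pi^* i_*(z) = j_*\bigl(c_{m-1}(\QQ)\cdot\pi_E^* z\bigr)$ is exactly Fulton's Proposition~6.7(a), proved there by deformation to the normal cone as you suggest, and the exact sequence with the multiplication rules is Eisenbud--Harris Theorem~13.14. Your middle-exactness computation (pushing forward by $\pi_*$ to pin down $\alpha$, then pulling back by $j^*$ and solving the triangular system in the projective-bundle basis) is clean and correct; the only place one must be slightly careful is that $i^* i_* z_0 = c_m(\NN)\cdot z_0$ requires the self-intersection formula for $Z\hookrightarrow X$, which you use implicitly but which is itself a nontrivial input.
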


\begin{proof}
This is \cite[Prop. 13.12, Thm. 13.14]{E-H16} and \cite[Prop. 6.7]{Ful98}.
\end{proof}

Sometimes one can find a more economical set of generators for the Chow ring of a blowup:

\begin{proposition}\label{pGettingRidOfZeta}
In the situation of Proposition \ref{pChowRingBlowUp} the ring $\Chow^*(\mathrm{Bl}_Z X)$
is generated by $\pi^*\Chow^* (X)$ and $j_* \pi_E^* \Chow^* (Z)$ as a ring.

The same statement holds with $\Chow$ replaced by $\Chownum$ everywhere.
\end{proposition}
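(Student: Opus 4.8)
The plan is to leverage the multiplication rules from Proposition \ref{pChowRingBlowUp} to rewrite any class $j_*(\gamma)$ with $\gamma \in \Chow^*(E)$ in terms of the two proposed families of generators. By Proposition \ref{pChowRingBlowUp} the ring $\Chow^*(\mathrm{Bl}_Z X)$ is generated by $\pi^*\Chow^*(X)$ and $j_*\Chow^*(E)$, so it suffices to show that every $j_*(\gamma)$ lies in the subring $S$ generated by $\pi^*\Chow^*(X)$ and $j_*\pi_E^*\Chow^*(Z)$. Since $E \simeq \P(\NN_{Z/X})$, Proposition \ref{pChowRingProjectiveBundle} tells us that $\Chow^*(E)$ is generated as a $\Chow^*(Z)$-algebra by $\zeta = c_1(\OO_{\P(\NN)}(1))$; hence an arbitrary class in $\Chow^*(E)$ is a $\pi_E^*\Chow^*(Z)$-linear combination of powers $\zeta^k$, $k \geq 0$. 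By additivity of $j_*$ it is therefore enough to treat classes of the form $\gamma = \pi_E^*(z)\cdot \zeta^k$ with $z \in \Chow^*(Z)$, and by induction on $k$ it suffices to handle the passage from $\zeta^{k-1}$ to $\zeta^{k}$.

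The key identity is the following: $\zeta$, as a class on $E$, equals $j^*(-j_*(1))$, i.e. the restriction to $E$ of minus the class of the exceptional divisor; equivalently, using the third multiplication rule of Proposition \ref{pChowRingBlowUp}, for any $\delta \in \Chow^*(E)$ we have
\[
j_*(\delta)\cdot j_*(1) = -j_*(\delta\cdot\zeta).
\]
Thus, starting from $j_*(\pi_E^*(z)) \in S$ (which lies in $S$ by definition) and multiplying repeatedly by $j_*(1) = j_*(\pi_E^*(1)) \in S$, we obtain
\[
j_*(\pi_E^*(z))\cdot j_*(1)^{k} = (-1)^{k}\, j_*\bigl(\pi_E^*(z)\cdot\zeta^{k}\bigr),
\]
so $j_*(\pi_E^*(z)\cdot\zeta^k) \in S$ for all $k \geq 0$ and all $z \in \Chow^*(Z)$. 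Combining this with the additivity of $j_*$ and the fact that the $\pi_E^*\Chow^*(Z)$-module $\Chow^*(E)$ is spanned by the $\zeta^k$, we conclude $j_*\Chow^*(E) \subseteq S$, and hence $S = \Chow^*(\mathrm{Bl}_Z X)$.

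I do not expect a serious obstacle here; the argument is essentially a bookkeeping exercise with the projection formula and the self-intersection formula for the exceptional divisor. The one point that needs a word of care is the claim that $\zeta$ is the restriction of $-E$ to $E$ (equivalently $j^*j_*(1) = -\zeta$): this is the standard identification of $\OO_E(E)$ with $\OO_{\P(\NN)}(-1)$ under $E \simeq \P(\NN_{Z/X})$, and it is exactly what is encoded in the third multiplication rule of Proposition \ref{pChowRingBlowUp}, so it may simply be cited from there. A minor caveat: the statement as printed says "the ring $\Chow^*(X)$ is generated by...", which should evidently read "the ring $\Chow^*(\mathrm{Bl}_Z X)$ is generated by...", and the proof above establishes the latter.
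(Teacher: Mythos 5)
Your proof is correct and takes essentially the same approach as the paper: write $j_*\Chow^*(E)$ in terms of classes $j_*(\zeta^k\pi_E^* z)$ using the projective bundle structure of $E$, then absorb the powers of $\zeta$ via the relation $j_*(\pi_E^* z)\cdot j_*(1)^k = (-1)^k j_*(\zeta^k\pi_E^* z)$ obtained from the third multiplication rule of Proposition \ref{pChowRingBlowUp}. Your remark that the statement should read $\Chow^*(\mathrm{Bl}_Z X)$ rather than $\Chow^*(X)$ is also correct.
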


\begin{proof}
$j_* \Chow^* (E)$ is generated as abelian group by elements of the form $j_*(\zeta^c \pi_E^* z)$. If $c \ge 1$ we have, by the last line of Proposition \ref{pChowRingBlowUp}, the following relation:
\[
	(j_* E)^c\cdot j_*(\pi_E^* z) = (-1)^c j_*(\zeta^c \pi_E^* z).
\]
We can therefore express all generators with $c \ge 1$ by such with $c=0$.

The assertion for $\Chownum$ follows from the one for $\Chow$ by passing to numerical equivalence.
\end{proof}

\begin{proposition}\label{pGettingRidOfEvenMore}
In the situation of Proposition \ref{pChowRingBlowUp} consider $\Chow^* (Z)$ as a $\Chow^* (X)$-module via the ring homomorphism $i^* \colon \Chow^* (X) \to \Chow^* (Z)$. Let
\[
	\ZZ \subset \Chow^*(Z)
\]
be a set of $\Chow^* (X)$-module generators of $\Chow^* (Z)$. Then 
$\Chow^*(\mathrm{Bl}_Z X)$ is generated as a ring by $\pi^*\Chow^* (X)$ and elements $j_* \pi_E^* z$, $z \in \ZZ$.

The same statement holds with $\Chow$ replaced by $\Chownum$ everywhere.
\end{proposition}

\begin{proof}
By Proposition \ref{pGettingRidOfZeta}, we only need to show that elements of the form $j_* \pi_E^* (y)$ with $y$ in $\Chow^* (Z)$ are in the subring generated by $\pi^*\Chow^* (X)$ and elements $j_* \pi_E^* z$, $z \in \ZZ$. 

Every such element $y$ is a sum of elements of the form  $i^*(x) \cdot z$ with $z\in \ZZ$ and $x\in \Chow^* (X)$ because $\ZZ$ is a set of $\Chow^* (X)$-module generators for $\Chow^* (Z)$. But
\[
j_* \Bigl( \pi_E^*\bigl(  i^*(x) \cdot z \bigr) \Bigr) 
= j_* \Bigl( \pi^*_E\bigl( i^*(x) \bigr) \cdot \pi_E^*\bigl( z ) \Bigr) 
= \pi^*(x) \cdot j_* \pi_E^*\bigl( z ) .
\]
The same proof goes through with $\Chow$ replaced by $\Chownum$ everywhere.
\end{proof}



We also need a few facts about Chow groups and rings of products. The first result says when a K\"unneth formula can be expected to hold.

In the following, by a \emph{linear variety} we mean a variety in the class of varieties constructed by an inductive procedure starting with an affine space of any dimension, in such a way that the complement of a linear variety imbedded in affine space in any way is a linear variety, and a variety stratified into a finite disjoint union of linear varieties is a linear variety. This is what is called a linear scheme in \cite{To14}. 

\begin{proposition}\label{pTotaroLinearSchemes}
Let $X$ be a linear variety.Then for any variety $Y$
\[
\Chow_* (X\times Y) \simeq \Chow_* (X) \otimes_{\Z} \Chow_* (Y).
\]
\end{proposition}
\begin{proof}
This is \cite[Prop. 1]{To14}.
\end{proof}

The next concerns Chow groups, modulo numerical equivalence, for self-products of very general curves of genus $\ge 1$. We will use this Proposition in the proof of  Lemma \ref{pCxCdsg} below. 

\begin{proposition}\label{pCxC}
Let $C$ be a very general smooth projective curve of genus $g$. Then $\Chownum^1 (C\times C)$ is generated by the class of the diagonal $[\Delta_C]$ and by $[C\times \{p\}]$ and $[\{p\}\times C]$ where $p\in C$ is a point.
\end{proposition}

\begin{proof}
Recall \cite[\S 11.5]{BL04} that a correspondence between smooth algebraic curves $C_1$, $C_2$ is a line bundle $L$ on $C_1\times C_2$, and two correspondences $L, L'$ are said to be equivalent if $L' = L \otimes \mathrm{pr}_1^* L_1 \otimes \mathrm{pr}_2^* L_2$ for some line bundles $L_1, L_2$ on $C_1, C_2$ and $\mathrm{pr}_i \colon C_1\times C_2\to C_i$ the projections. Then \cite[Thm. 11.5.1]{BL04} shows that, denoting by $\mathrm{Corr}(C_1, C_2)$ the $\Z$-module of equivalence classes of correspondences between $C_1$ and $C_2$, there is a canonical isomorphism of abelian groups
\[
\mathrm{Corr}(C_1, C_2) \to \mathrm{Hom}(\mathrm{Jac}(C_1), \mathrm{Jac}(C_2)), 
\]
where $\mathrm{Jac}(C_i)$ is the Jacobian of $C_i$, and it follows from the definition of this isomorphism in loc. cit. that for $C=C_1=C_2$ it maps the line bundle $\OO (\Delta_C)$ to the identity in $\mathrm{End} (\mathrm{Jac}(C))$. 

Thus to conclude the proof of the Proposition, it suffices to remark that for $C$ a very general curve of genus $g$, we have $\mathrm{End} (\mathrm{Jac}(C))\simeq \Z$, where $\Z$ is generated by the image of the identity: this is stated in \cite[\S 11.12.13]{BL04} and proved in \cite{Koi76}. Note that in \cite{BL04} it says that the result holds for ``a general curve" whereas in \cite{Koi76} the author refers to a ``generic point of the moduli space": both wordings are somewhat imprecise from the viewpoint of modern terminology; what is actually true and proved is that the result is valid outside a countable union of proper closed subvarieties of the moduli space, or for a ``very general curve". Already the case of elliptic curves shows that the restriction to very general curves is necessary: there are infinitely many non-isomorphic elliptic curves with complex multiplication (for example, the class numbers associated with imaginary quadratic number fields are not bounded).
\end{proof}

\section{Numerical Chow rings via dual socle generators}\label{sDualSocle}

From now on we work with Chow rings modulo numerical equivalence, and denote these by $\Chownum^* (X)$ for a smooth projective variety $X$. We would like to be able to write these numerical Chow rings, which are Artin rings, in more compact and computationally convenient form. For this we briefly recall some facts about zero-dimensional Gorenstein rings from \cite[Section 21.2]{Ei04}, partly to set up notation. 

\medskip

Let $k$ be a field (later we will work with a subring, too, for us $k=\Q$ and the subring will be $\Z$), and let 
\[
R= k[x_1, \dots , x_r], \quad R^* = k[x_1^{-1}, \dots , x_r^{-1}]
\]
be the polynomial rings in variables $x_i$ and their inverses, respectively, both considered as subrings of $K=k(x_1, \dots , x_r)$. We make $R^*$ into an $R$-module by decreeing that for monomials $m\in R$ and $n\in R^*$, $m\cdot n$ is to be the product $mn \in K$ if this lies in the subring $R^*$, and zero otherwise. Now \cite[Thm. 21.6]{Ei04} says that the ideals $I \subset (x_1, \dots , x_r)$ such that $R/I$ is a local zero-dimensional Gorenstein ring are precisely the ideals of the form $I= \mathrm{Ann}_R (f)$ for some nonzero element $f\in R^*$. Here $f$ is called the \emph{dual socle generator} of $R/I$. 

\begin{notation}\label{nDualSocle}
Let $X$ be a smooth projective variety of dimension $d$ and let $\Chownum^* (X)$ and $\Chownum^* (X)_{\Q}:=\Chownum^* (X)\otimes_{\Z}\Q$ be its Chow ring of cycles modulo numerical equivalence with $\Z$ and $\Q$ coefficients, respectively.  Note that $\Chownum^* (X)$ is graded by the codimension of the cycles involved. 
Let $x_1, \dots , x_r$ be variables corresponding to homogeneous generators $\alpha_1, \dots , \alpha_r$ of $\Chownum^* (X)$ with weight $\weight (x_i) =\mathrm{codim}\, \alpha_i$.

We use multi-index notation and write:
\begin{align*}
a=& (a_1, \dots , a_r)\in \N^r\\
|a|=& \sum_{i=1}^r \weight (x_i)a_i \\
\alpha^a =& \alpha_1^{a_1}\cdot \ldots \cdot \alpha_r^{a_r} \in \Chownum^* (X)\\
x^{-a}=& x_1^{-a_1}\cdot \ldots \cdot x_r^{-a_r} \in \Z [x_1^{-1}, \dots , x_r^{-1}]
\end{align*}
\end{notation}

\noindent Notice that the degree map
\[
\deg \colon \Chownum^d (X) \to \Z
\]
sending the class of a point to $1$ is an isomorphism. 

There is a natural surjection of graded rings
\[
\xi \colon \Z [x_1, \dots , x_r] \to \Chownum^* (X)
\]
sending $x_i$ to $\alpha_i$, and similarly with $\Q$-coefficients. 

\begin{lemma}\label{lChowNumDualSocle}
With the previous notation, let 
\[
f_X = \sum_{|a| =d} \deg (\alpha^a) \cdot x^{-a} \in \Z [x_1^{-1}, \dots , x_r^{-1}].
\]
Then
\[
\Chownum^* (X) = \Z [x_1, \dots , x_r] / \mathrm{Ann}(f_X), \quad \Chownum^* (X)_{\Q} = \Q [x_1, \dots , x_r] / \mathrm{Ann}(f_X) .
\]
In particular, $\Chownum^* (X)_{\Q}$ is a Gorenstein local ring with dual socle generator $f_X$. 
\end{lemma}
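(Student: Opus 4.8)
The plan is to prove Lemma \ref{lChowNumDualSocle} by exhibiting $\Chownum^*(X)_{\Q}$ as a zero-dimensional Gorenstein ring and identifying its dual socle generator via Macaulay's theorem, then descending the identity to $\Z$-coefficients. First I would recall why $\Chownum^*(X)_{\Q}$ is a local Artinian ring: it is finite-dimensional over $\Q$ (numerical equivalence has finite rank in each codimension), graded, with $\Chownum^0 = \Q$ and $\Chownum^d(X)_{\Q}\cong \Q$ given by the degree map, and all higher graded pieces vanish. Being graded with one-dimensional socle in top degree $d$ — here I would invoke Poincar\'e duality for numerical equivalence, namely that the pairing $\Chownum^i\times\Chownum^{d-i}\to \Chownum^d\cong\Q$ is perfect — it follows that the socle is exactly $\Chownum^d(X)_{\Q}$, one-dimensional, so the ring is Gorenstein. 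Hence by \cite[Thm. 21.6]{Ei04} it equals $\Q[x_1,\dots,x_r]/\mathrm{Ann}(f)$ for some $f\in\Q[x_1^{-1},\dots,x_r^{-1}]$.

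Next I would pin down which $f$. The surjection $\Q[x_1,\dots,x_r]\to\Chownum^*(X)_{\Q}$ sending $x_i\mapsto\alpha_i$ has kernel an ideal $I$ contained in $(x_1,\dots,x_r)$ (since the $\alpha_i$ are homogeneous of positive degree), and the claim is $I=\mathrm{Ann}(f_X)$ for the explicit $f_X=\sum_{|a|=d}\deg(\alpha^a)x^{-a}$. For this I would use the characterization of $\mathrm{Ann}(f)$ in terms of the contraction action: a polynomial $g\in R=\Q[x_1,\dots,x_r]$ lies in $\mathrm{Ann}(f)$ iff $g\circ f=0$ in $R^*$, where the action is the one defined in the excerpt. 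The key computation is that for a monomial $x^b$, one has $x^b\circ f_X = \sum_{|a|=d,\,a\ge b}\deg(\alpha^a)x^{b-a}$, and the coefficient of $x^{0}$ (i.e. of $1$) in $x^b\circ f_X$ is $\deg(\alpha^b)$ when $|b|=d$, and more generally pairing $x^b\circ f_X$ against a monomial $x^c$ extracts $\deg(\alpha^{b+c})$ — matching exactly the degree pairing on $\Chownum^*(X)_{\Q}$. Therefore a general element $g=\sum_b \lambda_b x^b$ satisfies $g\circ f_X=0$ iff $\deg(\alpha^{b+c})$-weighted sums vanish for all $c$, i.e. iff $\sum_b\lambda_b\alpha^b$ pairs trivially with everything in $\Chownum^*(X)_{\Q}$, which by Poincar\'e duality means $\sum_b\lambda_b\alpha^b=0$, i.e. $g\in I$. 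This gives $I=\mathrm{Ann}(f_X)$ over $\Q$.

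For the integral statement I would argue that $\Chownum^*(X) = \Z[x_1,\dots,x_r]/\mathrm{Ann}_{\Z}(f_X)$ where now the annihilator is taken inside $\Z[x_1^{-1},\dots,x_r^{-1}]$ with the same contraction action. Since $\Chownum^*(X)$ is torsion-free by construction (we quotient by numerical equivalence, so $\Chownum^*(X)\hookrightarrow\Chownum^*(X)_{\Q}$), the kernel $I_{\Z}$ of $\Z[x_1,\dots,x_r]\twoheadrightarrow\Chownum^*(X)$ is $I\cap\Z[x_1,\dots,x_r]$; and the same pairing argument over $\Z$ shows $g\circ f_X=0$ for $g\in\Z[\underline x]$ iff $g$ pairs to zero against all of $\Chownum^*(X)$ via the (integral, unimodular) Poincar\'e pairing, iff $g\in I_{\Z}$. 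Here the fact that the integral numerical pairing is perfect (unimodular) is exactly what makes the contraction criterion detect membership in $I_{\Z}$ rather than merely in $I_{\Z}$ up to finite index. The main obstacle I anticipate is precisely verifying this unimodularity of the integral Poincar\'e duality pairing on $\Chownum^*(X)$ — this is standard (it follows from $\Chownum^d(X)\cong\Z$ together with the definition of numerical equivalence, which forces the induced pairing to be nondegenerate on the torsion-free quotient, hence perfect) but it is the one place where one must be careful rather than formal; everything else is a direct bookkeeping of the contraction action against the displayed formula for $f_X$.
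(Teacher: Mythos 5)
Your central computation---that contracting a monomial $x^b$ against $f_X$ and reading off the coefficient of $x^{-c}$ returns $\deg(\alpha^{b+c})$, so that a homogeneous $g$ lies in $\mathrm{Ann}(f_X)$ exactly when $g(\alpha_1,\dots,\alpha_r)$ pairs to zero against all monomials of complementary degree---is the same duality argument as the paper's proof, and your preliminary detour through \cite[Thm. 21.6]{Ei04} to recognize $\Chownum^*(X)_{\Q}$ abstractly as Gorenstein is harmless, though redundant: once the kernel of $\Q[x_1,\dots,x_r]\to\Chownum^*(X)_{\Q}$ is identified with $\mathrm{Ann}(f_X)$, both displayed presentations and the Gorenstein statement follow at once.

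There is, however, one genuinely incorrect assertion in your integral step. The integral numerical pairing $\Chownum^i(X)\times\Chownum^{d-i}(X)\to\Z$ is \emph{not} perfect (unimodular) in general, and ``nondegenerate, hence perfect'' is a false implication for $\Z$-lattices: for a smooth quartic surface $X\subset\P^3$ of Picard rank one with $\Chownum^1(X)=\Z H$, the pairing on $\Chownum^1$ has Gram matrix $(4)$, so no dual basis exists over $\Z$. Fortunately your argument never needs unimodularity. The implication you actually use---an integral class pairing to zero with every class of complementary codimension is zero in $\Chownum^*(X)$---is precisely the definition of numerical equivalence (nondegeneracy), combined with the fact that monomials in the ring generators $\alpha_i$ span each graded piece as a $\Z$-module. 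This is exactly how the paper argues; the only pairing it invokes as perfect is the monomial pairing $R_{d-\delta}\times R^*_{-(d-\delta)}\to\Z$ between the polynomial ring and the inverse polynomial module, which is unimodular on the nose. Replace your unimodularity claim by this definitional nondegeneracy and the proof is correct and essentially identical to the paper's.
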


\begin{proof}
First notice that if $p\in \Z [x_1, \dots , x_r]$ is homogeneous of weight $d$, then $p\cdot f_X =\deg (\xi (p))$ by definition of the pairing and of $f_X$. 

We need to show that $\ker (\xi )=\mathrm{Ann}(f_X)$. 

Let $p\in \ker (\xi )$. We can assume without loss of generality that $p$ is homogeneous of weight $\delta$ since $\xi$ is graded. Then $\xi (p) \in \Chownum^\delta (X)$ is zero, hence for every $q \in \Z [x_1, \dots , x_r]$ homogeneous of weight $d-\delta$ we have 
\[
pq f_X = \deg (\xi (p\cdot q)) = \deg(\xi (p) \xi (q)) = 0 .
\]
In other words, every such $q$ annihilates $pf_X$. Since the pairing $R_{d-\delta}\times R^*_{-(d-\delta)} \to \Z$ is perfect (or, becomes perfect over $\Q$), we have $pf_X=0$. Hence $p\in \mathrm{Ann}(f_X)$.

Conversely, if $p$ is in $\mathrm{Ann}(f_X)$ and homogeneous of weight $\delta$, let $\eta \in \Chownum^{d-\delta} (X)$ be arbitrary and write it as $\xi (q) =\eta$ with $q$ homogeneous of weight $d-\delta$. Then we have
\[
\deg (\eta . \xi (p)) = \deg (\xi (q) .\xi (p))  =\deg (\xi (qp) ) = qp f_X = 0. 
\]
Hence $\xi (p)$ is zero in $\Chownum^* (X)$ by definition of numerical equivalence. 

The same proof works with $\Q$-coefficients. 
\end{proof}

\begin{lemma}\label{lEnoughRelations}
Let $X$ be a smooth projective variety of dimension $d$. Let
\[
\xi \colon \Z [x_1, \dots , x_r] \to \Chownum^* (X)
\]
be a surjection of graded rings as above. Let $I$ be an ideal contained in $\ker \xi$ such that the induced homomorphism
\[
\overline{\xi} \colon \Z [x_1, \dots , x_r]/I \to \Chownum^* (X)
\]
is an isomorphism between elements of weight $d$:
\[
\overline{\xi} \colon (\Z [x_1, \dots , x_r]/I)_d \to \Chownum^* (X)_d \simeq \Z .
\] 
Then
\[
f_X = \sum_{|a| =d} \overline{\xi} (x^a) \cdot x^{-a} \in \Z [x_1^{-1}, \dots , x_r^{-1}]
\]
and we can write
\[
\Chownum^* (X) = \Z [x_1, \dots , x_r] / \mathrm{Ann}(f_X). 
\]
In other words, $I$ is enough to compute the entire kernel of $\xi$.
\end{lemma}

\begin{proof}
Since $\overline{\xi}$ induces an isomorphism between elements of weight $d$ by assumption, we get 
\[
\overline{\xi} (x^a) = \deg (\alpha^a)
\]
for $|a|=d$. The claim then follows from Lemma \ref{lChowNumDualSocle}. 
\end{proof}

\section{The case of cubic threefolds}\label{sCubicThreefolds}

Consider a degeneration $\pi_{\VV}\colon \VV \to B$ of smooth cubic threefolds, over a Zariski open subset $B\subset \mathbb{A}^1$ containing $0\in \mathbb{A}^1$, into the union of a smooth quadric and a hyperplane in $\P^4$, given by an equation
\[
\{ lq - tf =0\} \subset \P^4\times B
\]
where $l, q, f \in \C[X_0, \dots , X_4]$ are homogeneous of degree $1, 2, 3$, respectively, and 
\begin{enumerate}
\item
$q$ defines a nonsingular quadric $Q$;
\item
$f$ is general, in particular $f=0$ defines a smooth cubic threefold $V$;
\item
the hyperplane $L$ defined by $l$ in $\P^4$ intersects $Q$ transversely in a smooth quadric surface $S\simeq\P^1\times\P^1$;
\item
$S \cap V$ is a smooth divisor $C$ of bidegree $(3,3)$ in $S$, which is a genus $4$ canonical curve in $L\simeq\P^3$. It is well-known that any nonhyperelliptic genus $4$ curve arises as a complete intersection of a quadric and a cubic in $\P^3$, hence if we choose $f$ (very) general and $l$, $q$ such that $l=q=0$ is a smooth quadric (all of these are projectively equivalent), the curve $C$ will be a (very) general canonical curve of genus $4$.
\end{enumerate}

Notice that the total space $\VV$ is singular in $C$. After shrinking $B$, we can assume there are no singularities outside the fibre over $0$.

We blow up the non-Cartier divisor $L$ in the total space $\VV$ and get a strictly semistable degeneration $\pi_{\XX}\colon \XX \to B$ with central fibre 
\[
X=L_C\cup Q.
\]
Here, by slight abuse of notation, we denote by $Q$ the irreducible component of $X$ mapping isomorphically to $Q$ in $\VV$ under the natural morphism $\XX\to \VV$. $L_C$ is the blowup of $L$ in $C$ with exceptional divisor $E_C$. $L_C$ and $Q$ intersect in a surface which is naturally isomorphic to $S$: in $Q$ we have the previous copy of $S$, and in $L_C$ the strict transform of $S$. Hence we denote this new surface by $S$ as well.

\begin{lemma} \label{lNumLC}
Let $H\in \Chownum^1 (L_C)$ be the pullback of a hyperplane class in $L=\P^3$,
$E = \P(\NN_{C/\P^3})$ the class of the exceptional divisor in $L_C$ and $F := \pi^*_E(P)$ with $P$ a point on $C$ (the class of a fiber of $E$).
Then 
\[
\Chownum^* (L_C) = \Z[H, E, F]/ \mathrm{Ann}(f_{L_C})
\]
with 
$$f_{L_C}= H^{-3}-6H^{-1}E^{-2}-30E^{-3}-E^{-1}F^{-1}.$$
\end{lemma}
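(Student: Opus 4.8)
The plan is to compute $\Chownum^*(L_C)$ directly from the blowup description of Proposition \ref{pChowRingBlowUp}, applied to $X = \P^3$ and $Z = C$ a smooth curve, and then extract the dual socle generator $f_{L_C}$ from the degrees of top products via Lemma \ref{lChowNumDualSocle}. First I would record the numerical Chow ring of $\P^3$: it is $\Z[H]/(H^4)$ with $\deg(H^3)=1$. Since $Z=C$ has codimension $m=2$ in $\P^3$, Proposition \ref{pGettingRidOfZeta} (or \ref{pGettingRidOfEvenMore}) tells us $\Chownum^*(L_C)$ is generated as a ring by $\pi^*\Chownum^*(\P^3)$ and $j_*\pi_E^*\Chownum^*(C)$; concretely this gives the generators $H$, $E = j_*(\text{fundamental class of }E)$, and $F = j_*\pi_E^*(P)$ for $P$ a point of $C$, since $\Chownum^*(C) = \Z\langle [C], [P]\rangle$. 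So the three listed generators do generate the ring, and I would note the dimensions: $L_C$ is a $3$-fold, so $f_{L_C}$ lives in degree $3$ and is a $\Z$-linear combination of $H^{-3}$, $H^{-2}E^{-1}$, $H^{-1}E^{-2}$, $E^{-3}$, $H^{-1}E^{-1}F^{-1}$, $\ldots$ — in fact only the monomials $x^{-a}$ with $|a|=3$ that are actually realized by products of the generators appear, with coefficient the degree of the corresponding product.

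The bulk of the work is then computing the six or so intersection numbers: $\deg(H^3)$, $\deg(H^2E)$, $\deg(HE^2)$, $\deg(E^3)$, $\deg(HEF)$, $\deg(E^2F)$, $\deg(EF^2)$, $\deg(HF^2)$, etc. I would handle these with the multiplication rules in Proposition \ref{pChowRingBlowUp}. We have $\deg_{L_C}(H^3)=\deg_{\P^3}(H^3)=1$ by the projection formula and $\pi_*\pi^*=\id$ on top degree. For products involving $E=j_*[E]$, use $\pi^*(\alpha)\cdot j_*(\gamma) = j_*(\pi_E^*i^*\alpha\cdot\gamma)$ and $j_*\gamma\cdot j_*\delta = -j_*(\gamma\cdot\delta\cdot\zeta)$, pushing everything down to $\Chownum^*(E)$ where $E=\P(\NN_{C/\P^3})$ is a $\P^1$-bundle over the genus-$4$ curve $C$. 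On $E$ one has the relation $\zeta^2 + c_1(\NN_{C/\P^3})\zeta = 0$ from Proposition \ref{pChowRingProjectiveBundle} (a ruled surface, so $\Chownum^*(E)$ is spanned by $1$, $\zeta$, $F=\pi_E^*P$, and the point class), together with $\deg_E(\zeta F)=1$ and $\deg_E(\zeta^2) = -\deg c_1(\NN_{C/\P^3})$. The one genuinely geometric input is $\deg c_1(\NN_{C/\P^3}) = \deg\NN_{C/\P^3}$, computed from the normal bundle sequence $0\to T_C\to T_{\P^3}|_C\to \NN_{C/\P^3}\to 0$: $\deg\NN_{C/\P^3} = \deg(c_1(T_{\P^3})|_C) - \deg T_C = 4\deg(H|_C) - (2-2g)$. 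Here $C$ is a genus-$4$ canonical curve cut out on the quadric surface $S\cong\P^1\times\P^1$ by a $(3,3)$ divisor, so $\deg(H|_C) = H\cdot C = 6$ in $\P^3$ (a canonical genus $4$ curve has degree $6$), giving $\deg\NN = 24 - (-6) = 30$. Thus $E^3 = j_*[E]\cdot j_*[E]\cdot j_*[E]$ unwinds, via two applications of $j_*\gamma\cdot j_*\delta = -j_*(\gamma\delta\zeta)$, to $j_*(\zeta^2) = j_*(-30F \cdot(\text{appropriate class}))$, ultimately $\deg(E^3) = -\deg_E(\zeta^2) \cdot(\pm 1)$ — and matching signs with the stated $f_{L_C}$, one gets coefficient $-30$ on $E^{-3}$. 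Similarly $\deg(HE^2)$ gives $-6$ on $H^{-1}E^{-2}$ (the $6$ being $H\cdot C$), and $\deg(EF) $ type computations — $EF = j_*[E]\cdot j_*\pi_E^*P = -j_*(\zeta\pi_E^*P)$, so $\deg(EF\cdot\text{?})$ — pin down the $-1$ coefficient on $H^{-1}E^{-1}F^{-1}$ (or rather the relevant monomial), while $\deg(H^2 E)$, $\deg(H F^2)$ and $\deg(E F^2)$ turn out to be $0$, explaining why those monomials are absent.

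The main obstacle is purely bookkeeping: getting all the signs right in the iterated applications of $j_*\gamma\cdot j_*\delta = -j_*(\gamma\cdot\delta\cdot\zeta)$ and the relation $\zeta^2 = -c_1(\NN)\zeta$ on the ruled surface $E$, and correctly translating "the degree of the product $\alpha^a$" into "the coefficient of $x^{-a}$ in $f_{L_C}$" as prescribed by Lemma \ref{lChowNumDualSocle} — in particular verifying that the monomials not listed (e.g. $H^{-2}E^{-1}$, $H^{-2}F^{-1}$, $H^{-1}F^{-2}$, $E^{-2}F^{-1}$, $E^{-1}F^{-2}$, $F^{-3}$) really do pair to zero with every degree-$3$ monomial in the generators, so that they are forced to have coefficient $0$. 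Once the intersection table $\bigl(\deg(H^3),\deg(HE^2),\deg(E^3),\deg(EF\cdot\text{-})\bigr) = (1, -6, -30, -1)$ is established and the vanishing of the rest is checked, Lemma \ref{lChowNumDualSocle} immediately yields $f_{L_C} = H^{-3} - 6H^{-1}E^{-2} - 30E^{-3} - E^{-1}F^{-1}$ and hence $\Chownum^*(L_C) = \Z[H,E,F]/\mathrm{Ann}(f_{L_C})$, as claimed.
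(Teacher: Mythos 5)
Your proposal is correct and is essentially the paper's argument: the paper simply cites \cite[Prop.~13.13]{E-H16} for the blow-up of $\P^3$ along a curve, whereas you rederive the same generators and intersection numbers from Proposition \ref{pChowRingBlowUp} together with $\deg \NN_{C/\P^3}=4\cdot 6-(2-2g)=30$, and all the substantive values and signs ($\deg(H^3)=1$, $\deg(HE^2)=-6$, $\deg(E^3)=-30$, $\deg(EF)=-1$) match $f_{L_C}$. Your only slips are cosmetic bookkeeping: since $\deg F=2$, the weight-$3$ monomials in $H,E,F$ are exactly $H^3,H^2E,HE^2,E^3,HF,EF$, so candidates you list such as $H^{-1}E^{-1}F^{-1}$, $H^{-2}F^{-1}$, $EF^2$ or $F^{-3}$ are excluded by degree alone, and the only vanishings actually needed are $\deg(H^2E)=\deg(HF)=0$, both immediate since $i^*(H^2)$ and $i^*(H)\cdot P$ lie in $\Chownum^{2}(C)=0$.
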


\begin{proof}
The Chow ring of $\P^3$ blown up in a smooth curve $C$ (for rational equivalence) is calculated in \cite[Prop. 13.13]{E-H16}. From this it follows that $H, E, F$ are ring generators of $\Chownum^* (L_C)$ and the intersection numbers in the dual socle generator as defined in Lemma \ref{lChowNumDualSocle} are the ones given above. 
\end{proof}

\begin{lemma} \label{lNumQ}
Let $S\in \Chownum^1 (Q)$ be the class of a hyperplane section of $Q$ and let $L$ be 
the class of a line in $Q$.
Then 
\[
\Chownum^* (Q) = \Z[S,L]/\mathrm{Ann}(f_Q) 
\]
with 
$$f_{Q}= 2S^{-3}+S^{-1}L^{-1}.$$
\end{lemma}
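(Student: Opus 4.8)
The plan is to compute $\Chownum^*(Q)$ directly from the geometry of the smooth quadric threefold $Q\subset\P^4$ and then read off the dual socle generator using Lemma \ref{lChowNumDualSocle}. First I would recall that $Q$ is a smooth three-dimensional quadric, so its integral Chow ring (which here agrees with numerical equivalence since $Q$ is cellular) is well known: $\Chow^*(Q)$ has a $\Z$-basis $1, h, \ell, [\mathrm{pt}]$ in codimensions $0,1,2,3$, where $h = S$ is the hyperplane class, $\ell = L$ is the class of a line, and the multiplicative structure is determined by $h^2 = 2\ell$, $h\cdot\ell = [\mathrm{pt}]$, $\ell^2 = 0$. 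Thus $\Chownum^*(Q) = \Z[S,L]/J$ for the ideal $J$ of relations generated by $S^2 - 2L$, $S\cdot L - [\mathrm{pt}]$ interpreted appropriately, and $L^2$; equivalently $\Chownum^*(Q)$ is generated by $S$ and $L$ subject to $S^2 = 2L$ and $L^2 = 0$ (and hence $S^3 = 2SL$, $S^4 = 0$).

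Next I would identify the top intersection numbers needed to form $f_Q$. Since $\dim Q = 3$ and we are told to use variables $S, L$ of degree $1$ and $2$ respectively, the monomials $\alpha^a$ with $|a| = 3$ are $S^3$ and $S\cdot L$ (the monomial $L$ alone has degree $2$, not $3$, and $S^2$ has degree $2$ as well). We have $\deg(S^3) = S^3 = 2$ (since $S^3 = 2SL$ and $SL$ is a point), and $\deg(S\cdot L) = 1$. Plugging into the formula of Lemma \ref{lChowNumDualSocle},
\[
f_Q = \sum_{|a|=3}\deg(\alpha^a)\,x^{-a} = 2\,S^{-3} + S^{-1}L^{-1},
\]
which is exactly the claimed expression. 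Finally, to conclude $\Chownum^*(Q) = \Z[S,L]/\mathrm{Ann}(f_Q)$ I would invoke Lemma \ref{lChowNumDualSocle} itself: since $S, L$ are indeed ring generators of $\Chownum^*(Q)$, the lemma gives the presentation with annihilator ideal of the dual socle generator, and one checks that $\mathrm{Ann}(f_Q)$ is generated by $S^2 - 2L$ and $L^2$ (or rather their images), matching the relations above.

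The only mild subtlety, and the main thing to get right, is the bookkeeping of degrees: one must be careful that $L$ has degree $2$ (it is a codimension-two class), so that the weighted degree $|a|$ in Notation \ref{nDualSocle} correctly singles out $S^3$ and $SL$ as the relevant top monomials and excludes, say, $S^2L$ (degree $4$) or $L$ (degree $2$). Once that is settled the computation is immediate. I do not anticipate any real obstacle here; the lemma is essentially a direct application of Lemma \ref{lChowNumDualSocle} to the standard presentation of the Chow ring of a smooth quadric threefold, analogous to the preceding Lemma \ref{lNumLC} for $L_C$.
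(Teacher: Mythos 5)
Your proof is correct and follows essentially the same route as the paper, which simply cites the standard computation of the Chow ring of a smooth quadric (Fulton, Example 1.9.1) and reads off the dual socle generator via Lemma \ref{lChowNumDualSocle}; your explicit verification of the relations $S^2=2L$, $L^2=0$ and the intersection numbers $\deg(S^3)=2$, $\deg(SL)=1$ is just a spelled-out version of that citation.
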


\begin{proof}
Apply \cite[Example 1.9.1]{Ful98}: here we use the Bruhat stratification of the rational homogeneous variety $Q$, a paving in affine spaces; thus the closures of the strata are 
\[
Q \supset T_pQ\cap Q \supset l \supset p
\]
where $T_pQ\cap Q$ is a quadric cone over a smooth conic (rationally equivalent to $S$), $l$ a line in that cone, $p\in l$ a point.
\end{proof}

\begin{lemma} \label{lNumS}
Let $S=\P^1\times \P^1$ as above. Let $R_1, R_2$ be the classes of the two rulings.  
Then
\[
\Chownum^* (S) = \Z[R_1, R_2]/\mathrm{Ann}(f_{S})
\]
with 
$$f_{S}= R_1^{-1}R_2^{-1}.$$
\end{lemma}

\begin{proof}
This is obvious. 
\end{proof}

\begin{lemma}\label{lRestrictionsAmongChow}
Let $\iota_{S, L_C}\colon S\to L_C$ and $\iota_{S, Q}\colon S \to Q$ be the natural inclusions. 
Then 
\begin{align*}
\iota_{S, L_C}^* ( H,\: E,\: F) &= \bigl(  R_1+R_2,\: 3(R_1 +R_2) ,\: R_1R_2 \bigr), \\
\iota_{S, Q}^* ( S,\: L) &= \bigl( R_1+R_2,\: R_1R_2 \bigr) 
\end{align*}
and
\begin{align*}
(\iota_{S, L_C})_* ( 1,\: R_1,\: R_2,\: R_1R_2 ) &= \bigl(  2H-E,\:  H^2 -3F,\: H^2-3F,\: H^3 \bigr), \\
(\iota_{S, Q})_*  ( 1,\: R_1,\: R_2,\: R_1R_2 ) &= \bigl( S,\: L,\: L,\: SL \bigr) .
\end{align*}

\end{lemma}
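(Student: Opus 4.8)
The plan is to verify Lemma~\ref{lRestrictionsAmongChow} by direct computation in the explicit presentations of the numerical Chow rings established in Lemmas~\ref{lNumLC}, \ref{lNumQ}, \ref{lNumS}, together with the blowup and projective-bundle formulas recalled in Section~\ref{sRecollectionChow}. Since $S\simeq\P^1\times\P^1$ sits inside $L_C$ as the strict transform of the hyperplane-section surface (equivalently, inside $\P^3$ as a hyperplane section $\cong\P^1\times\P^1$ which is disjoint from its behaviour along the blown-up curve except along $C\subset S$), and inside $Q$ as an ordinary hyperplane section, both inclusions are of the codimension-one type to which Proposition~\ref{pChowRingBlowUp} and the projection formula apply.

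First I would treat the pullbacks. For $\iota_{S,Q}^*$ this is immediate: $S\cap$(hyperplane section of $Q$) is a curve of bidegree $(1,1)$ on $\P^1\times\P^1$, so $\iota_{S,Q}^*(S)=R_1+R_2$, and a line in $Q$ meeting $S$ in a point gives $\iota_{S,Q}^*(L)=R_1R_2$ (the point class). For $\iota_{S,L_C}^*$, note that $H$ restricts to a hyperplane section of $S\subset\P^3$, i.e.\ $R_1+R_2$; the exceptional divisor $E$ restricts on $S$ to the pullback along $S\to S$ (trivial blowup locus $C\subset S$) of $\OO_S(C)$ with $C$ of bidegree $(3,3)$, which gives $3(R_1+R_2)$; and the fiber class $F=\pi_E^*(P)$ restricts to a point on $S$, namely $R_1R_2$. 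The pushforward formulas then follow either by the projection formula $\iota_*(\iota^*\alpha\cdot\beta)=\alpha\cdot\iota_*\beta$ applied to the already-computed pullbacks, starting from $\iota_*(1)=[S]\in\Chownum^1$, or equivalently by matching intersection numbers against the dual socle generators $f_{L_C}$, $f_Q$: one checks that $\iota_*(1)$ pairs correctly with all of $\Chownum^*(L_C)$ (resp.\ $\Chownum^*(Q)$) using that $\deg_{L_C}(\iota_*(1)\cdot\gamma)=\deg_S(\iota^*\gamma)$. In $Q$ one has $[S]=S$ (the hyperplane class), $\iota_*(R_i)$ is a line class $L$, and $\iota_*(R_1R_2)$ is the point class $SL$; in $L_C$, $\iota_*(1)=2H-E$ is forced because its restriction back to $S$ must be $c_1(\NN_{S/L_C})$, and $\NN_{S/L_C}=\OO_S(1)\otimes\OO_S(-E|_S)\otimes(\text{correction})$, but it is cleanest to pin down the coefficients by the requirement that $\iota^*(2H-E)$ equals the self-intersection class $[S]|_S = (R_1+R_2)-3(R_1+R_2)=-2(R_1+R_2)=c_1(\NN)$, and that all top intersection numbers with $H^3, H^2E,\dots$ come out right; similarly $\iota_*(R_i)=H^2-3F$ and $\iota_*(R_1R_2)=H^3$ (the point class).

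The main obstacle is getting the class $\iota_{S,L_C\,*}(1)=2H-E$ and its consistency entirely right, because $L_C$ is a blowup and $S$ is the strict transform of a surface that genuinely meets the blown-up curve $C$; one must be careful that $S$ itself is \emph{not} the total transform, so $S|_{L_C}$ is the hyperplane class minus the exceptional contribution along $C$, and that $E|_S$ is $3(R_1+R_2)$ rather than a single ruling or a multiple thereof (this uses that $C$ has bidegree $(3,3)$ on $S$). Once $\iota_{S,L_C}^*(H,E,F)$ is nailed down, everything else is forced by the projection formula and by the non-degeneracy of the intersection pairing encoded in the dual socle generators, so I would organize the proof as: (i) compute the three pullbacks $\iota^*(H),\iota^*(E),\iota^*(F)$ and $\iota^*(S),\iota^*(L)$ geometrically; (ii) deduce $\iota_*(1)$ in each case from the normal bundle / self-intersection constraint; (iii) apply the projection formula to get $\iota_*(R_1),\iota_*(R_2),\iota_*(R_1R_2)$; (iv) sanity-check all top intersection numbers against $f_{L_C}$ and $f_Q$.
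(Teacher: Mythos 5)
Your route is essentially correct, but it justifies the one genuinely non-obvious part differently from the paper. The paper observes that only the third line needs an argument and settles it purely geometrically: the strict transform of the quadric $S\subset L\cong\P^3$ has class $2H-E$ (total transform $2H$ minus $E$, since $C\subset S$ has multiplicity one), and the ruling lines are trisecants of the bidegree $(3,3)$ curve $C$, so their strict transforms have class $H^2-3F$. You instead compute the pullbacks geometrically (correctly, including $\iota_{S,L_C}^*(E)=3(R_1+R_2)$) and then recover the pushforwards from the projection formula together with the non-degeneracy of the numerical pairing encoded in $f_{L_C}$, $f_Q$. That is a valid and more systematic alternative, with one caveat: the projection formula alone only yields $\iota_*(R_1+R_2)=H\cdot(2H-E)$ and $\iota_*(R_1R_2)=F\cdot(2H-E)$, because neither ruling is individually a pullback; to get each $\iota_*(R_i)=H^2-3F$ you must use the pairing step you mention, e.g. $\iota_*(R_i)\cdot H=R_i\cdot(R_1+R_2)=1$ and $\iota_*(R_i)\cdot E=R_i\cdot 3(R_1+R_2)=3$, together with the fact that $\{H^2,F\}$ pairs unimodularly with $\{H,E\}$. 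The paper's trisecant remark gets the same conclusion in one line.

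One slip to fix: your normal-bundle consistency check. One has $\iota_{S,L_C}^*(2H-E)=2(R_1+R_2)-3(R_1+R_2)=-(R_1+R_2)$, and indeed $c_1(\NN_{S/L_C})=c_1\bigl(\OO_S(-1,-1)\bigr)=-(R_1+R_2)$, consistent with the splitting $\NN_{(S\times S)/(L_C\times Q)}\simeq\OO(-1,-1,0,0)\oplus\OO(0,0,1,1)$ used in Lemma \ref{lCompareMN}. Your stated value $-2(R_1+R_2)$ drops the factor $2$ on $H$ and, taken literally, would contradict $\iota_*(1)=2H-E$. The error is harmless for your overall argument, since $\iota_*(1)=aH+bE$ is already forced by the pairing: $a=\iota_*(1)\cdot H^2=\deg_S(R_1+R_2)^2=2$ and $-b=\iota_*(1)\cdot F=\deg_S(R_1R_2)=1$ (equivalently by the total-transform-minus-$E$ description); but either correct the normal-bundle computation or delete it.
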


\begin{proof}
Only the third formula is not obvious. For the third formula, remark that the class of the strict transform of $S$ in $L_C$ is given by $2H - E$, and that the lines of the rulings on $S$ are trisecants to the bidegree $(3,3)$ curve $C$, hence give classes $H^2-3F$ as claimed. 
\end{proof}

\

\section{The product family}\label{sProductFamily}

Consider the product family $\XX\times_B \XX \to B$. The total space is singular in a variety isomorphic to $S\times S$ contained in the central fibre as the locus where all four irreducible components $L_C\times L_C$, $L_C\times Q$, $Q\times L_C$, $Q\times Q$ intersect. We now blow up $L_C\times Q$ in the total space and obtain a strictly semistable degeneration $\pi_{\YY}\colon \YY\to B$ with components of the central fibre $Y$ given by 
\begin{align*}
Y_1=& L_C\times L_C,\\
Y_2=& \mathrm{Bl}_{S\times S} \left(  L_C\times Q  \right), \\
Y_3=& \mathrm{Bl}_{S\times S} \left(  Q\times L_C  \right), \\
Y_4=&Q\times Q.
\end{align*}
In Figure \ref{ProductConfiguration} we have indicated these four components and their mutual intersections.

The fact that $\pi_{\YY}\colon \YY\to B$ is strictly semistable after this one blowup can be checked by a local calculation: $\XX\times_B \XX$ is singular in the points where all four irreducible components of the central fibre intersect and locally around such a point $\XX\times_B \XX$ is given by equations $xy-t=0, \: ab-t=0$ and the blowup centre is given by $x=a=t=0$. Eliminating $t$ we need to blow up $xy-ab=0$, which, up to taking a product with an affine space, is the cone over a smooth quadric surface in $\P^3$. We then blow up the locus $x=a=0$, which is a smooth surface through the vertex of the cone that maps to a line on the quadric. We obtain a small resolution of the vertex of the cone in this way.

\begin{figure}[h]
	\centering
	\includegraphics[scale=0.45]{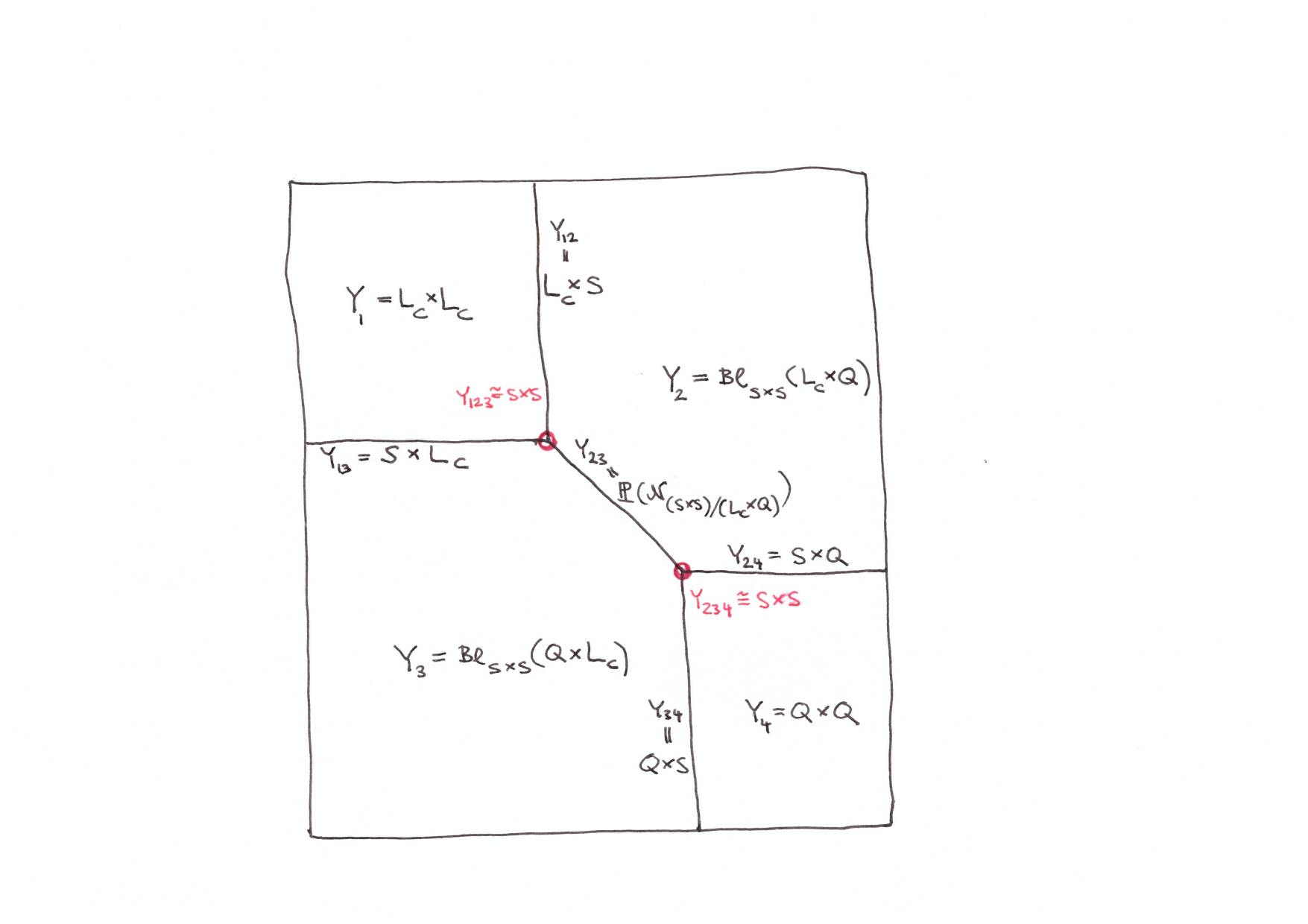}
	\caption{}\label{ProductConfiguration}
\end{figure}

The mutual intersections $Y_{ij}=Y_i\cap Y_j$ of these components are:
\begin{align*}
Y_{12}=& L_C\times S\\
Y_{13}=& S\times L_C \\
Y_{14}=& \emptyset \\
Y_{23}=& \P \left( \NN_{(S\times S)/(L_C\times Q)} \right) \simeq \P \left( \NN_{(S\times S)/(Q\times L_C)} \right)  \\
Y_{24}=&S \times Q \\
Y_{34}=& Q\times S
\end{align*}

The inclusions 
\begin{align*}
\iota_{\{1,2\} , \{1\}}\colon & Y_{12} \to Y_1\\
\iota_{\{1,3\} , \{1\}}\colon & Y_{13} \to Y_1\\
\iota_{\{2,4\} , \{4\}}\colon & Y_{24} \to Y_4\\
\iota_{\{3,4\} , \{4\}}\colon & Y_{34} \to Y_4
\end{align*}
are products of the natural inclusions $\iota_{S, L_C}\colon S\to L_C$ and $\iota_{S, Q}\colon S \to Q$ with identity maps. The inclusions of $Y_{23}$ into $Y_2$ and $Y_3$ are the inclusions of the exceptional divisors of the respective blowups.
The inclusion of $Y_{13}$ into $Y_3$ is obtained as follows: one has the inclusions
\[
S\times S \subset S \times L_C \subset Q\times L_C. 
\] 
Thus we see that, blowing up the $S\times S$ in $Q\times L_C$, the strict transform of $S\times L_C$ is isomorphic to $S\times L_C$. This isomorphism composed with the inclusion into $Y_3$ gives $\iota_{\{1,3\} , \{3\}}$. Similarly for the remaining cases.

The triple intersections are
\begin{align*}
Y_{123}=Y_{234}= & S\times S.
\end{align*}

The inclusions $\iota_{\{1,2,3\} ,\{i,j\}}$ are clear unless $\{ i,j\} =\{2,3\}$ in which case we deal with it in Proposition \ref{pPushPull23}.

We seek to compute the saturated prelog Chow group of the central fibre $Y$ of $\YY\to B$. 

\subsection{The numerical Chow ring of $Y_1$}

For this we begin with $\Chownum^*(Y_1)$. Notice that $Y_1 = L_C \times L_C$ is the blowup of
$L_C \times \P^3$ in $L_C \times C$, and that  furthermore $L_C \times C$ is the blowup of $\P^3 \times C$ in $C \times C$. This gives the following combined blowup diagram:

\[
\xymatrix{
& E_{L_C \times C} \ar[d]^{\pi'_E}  \ar@{^{(}->}[r]^{j'}  
& L_C\times L_C\ar[d]^{\pi'} 
\\
E_{C \times C} \ar[d]^{\pi_E}\ar@{^{(}->}[r]^j 
& L_C\times C\ar[d]^{\pi} \ar@{^{(}->}[r]^{i'}  
& L_C\times\P^3 
\\
C\times C \ar@{^{(}->}[r]^i 
& \P^3\times C &
}
\]

\begin{lemma}\label{pCxCdsg}
We have
\[
\Chownum^* (C\times C) = \Z [p, P, \Delta_C]/\mathrm{Ann}(f_{C\times C})
\]
where $p$ is the pullback of the class of a point on $C$ via the first projection, $P$ the same via the second projection, and $\Delta_C$ the class of the diagonal and
\begin{gather*}
f_{C\times C}= p^{-1}P^{-1} + p^{-1}\Delta_C^{-1} + \Delta_C^{-1}P^{-1} - 6\Delta_C^{-2}.
\end{gather*}
\end{lemma}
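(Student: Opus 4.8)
The plan is to apply Lemma \ref{lChowNumDualSocle} directly: by Proposition \ref{pCxC}, the group $\Chownum^1(C\times C)$ is generated by $[\{p\}\times C]$, $[C\times\{p\}]$ and $[\Delta_C]$, which in the notation of the statement are $p$, $P$ and $\Delta_C$; and for a product of two copies of a curve all of $\Chownum^*(C\times C)$ is in degrees $0,1,2$, so these three divisor classes generate the whole ring. Thus it suffices to compute the degrees (i.e.\ intersection numbers) of all degree-$2$ monomials in $p,P,\Delta_C$ and read off $f_{C\times C}$ as the sum $\sum_{|a|=2}\deg(\alpha^a)x^{-a}$ prescribed in Lemma \ref{lChowNumDualSocle}. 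Concretely I would record: $p^2 = 0$, $P^2 = 0$ (pullbacks of a point under a projection to a curve), $p\cdot P = 1$ (a horizontal and a vertical fibre meet in one point), $p\cdot\Delta_C = 1$ and $P\cdot\Delta_C = 1$ (the diagonal meets each fibre once), and $\Delta_C^2 = \deg(\mathcal N_{\Delta_C/C\times C}) = \deg K_C = 2g-2$, which for $g=4$ equals $6$. (For the sign: the self-intersection of the diagonal on $C\times C$ is $-(2g-2)$ under one standard orientation; here, consistently with the conventions used elsewhere in the paper for self-intersections of exceptional/strict-transform loci, one takes $\Delta_C^2 = -(2g-2) = -6$, which produces the coefficient $-6$ on $\Delta_C^{-2}$.)

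Assembling these, the degree-$2$ part of the dual socle generator is
\[
f_{C\times C} = (p\cdot P)\,p^{-1}P^{-1} + (p\cdot\Delta_C)\,p^{-1}\Delta_C^{-1} + (\Delta_C\cdot P)\,\Delta_C^{-1}P^{-1} + (\Delta_C^2)\,\Delta_C^{-2} = p^{-1}P^{-1} + p^{-1}\Delta_C^{-1} + \Delta_C^{-1}P^{-1} - 6\,\Delta_C^{-2},
\]
the monomials $p^{-2}$ and $P^{-2}$ being absent because $p^2 = P^2 = 0$. Then Lemma \ref{lChowNumDualSocle} gives $\Chownum^*(C\times C) = \Z[p,P,\Delta_C]/\operatorname{Ann}(f_{C\times C})$ verbatim, both integrally and rationally.

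The only genuine points requiring care, rather than the routine intersection arithmetic, are: (i) confirming that $p,P,\Delta_C$ really do generate the \emph{ring} (not merely $\Chownum^1$) — this is immediate since $\Chownum^0$ is generated by $1$, $\Chownum^2$ by the class of a point which equals $p\cdot P$, and there is nothing in higher degree on a surface; and (ii) pinning down the sign convention for $\Delta_C^2$ so that the stated coefficient $-6$ is correct, i.e.\ making sure the self-intersection is $-(2g-2)$ with $g=4$, which is the genus of the canonical curve $C$ recorded in condition \emph{d)} of Section \ref{sCubicThreefolds}. Neither step is a real obstacle; the lemma is essentially a bookkeeping consequence of Proposition \ref{pCxC} and Lemma \ref{lChowNumDualSocle}.
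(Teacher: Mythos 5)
Your overall route is the same as the paper's: invoke Proposition \ref{pCxC} to get that $p$, $P$, $\Delta_C$ generate $\Chownum^1(C\times C)$, observe that on a surface the ring is then generated by these classes (with $\Chownum^2\cong\Z$ generated by $pP$), compute the degree-$2$ intersection numbers, and read off the dual socle generator via Lemma \ref{lChowNumDualSocle}. The numbers $p^2=P^2=0$, $pP=p\Delta_C=P\Delta_C=1$ and the final value $\Delta_C^2=-6$ are all correct, and the assembled $f_{C\times C}$ matches the statement.

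However, your justification of $\Delta_C^2=-6$ is flawed. The normal bundle of the diagonal in $C\times C$ is the \emph{tangent} bundle $T_C$, not the canonical bundle $K_C$; hence
\[
\Delta_C^2=\deg \NN_{\Delta_C/C\times C}=\deg T_C=2-2g=\chi_{\mathrm{top}}(C)=-6
\]
for the genus-$4$ curve $C$, which is precisely the fact the paper uses (``$C$ has topological Euler characteristic $-6$''). Your computation $\deg\NN_{\Delta_C/C\times C}=\deg K_C=2g-2=6$ is wrong, and the subsequent appeal to an ``orientation'' or a ``sign convention'' to flip $+6$ to $-6$ is not a legitimate repair: the self-intersection of an algebraic divisor class on a smooth projective surface is an unambiguous integer, independent of any convention, and the conventions the paper uses for exceptional divisors of blow-ups have no bearing on it. With the normal bundle identified correctly the sign comes out by itself, and the rest of your argument goes through exactly as in the paper.
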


\begin{proof}
We use Proposition \ref{pCxC} and the fact that the normal bundle of the diagonal in $C\times C$ is the tangent bundle $T_C$ of $C$: therefore, 
\[
\Delta_C^2 =\deg T_C = 2 - 2g = -6. 
\]
\end{proof}

\begin{proposition} \label{pGensLCxC}
$\Chownum^*(L_C \times C)$ is generated as a ring by the elements
\[
	\{h,e,f,\bar{P},\overline{\Delta}_C \},
\]
where $h,e,f$ are the classes coming from $L_C$ and $\bar{P}$ the point class coming from the second factor of $L_C \times C$. Furthermore $\overline{\Delta}_C = j_*\pi_E^*(\Delta_C)$ is the $\P^1$ bundle over $\Delta_C$ in $E_{C\times C}$.\end{proposition}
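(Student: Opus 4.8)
The plan is to apply Proposition~\ref{pGettingRidOfEvenMore} to the lower-left blow-up square of the combined diagram above, that is, to $\pi\colon L_C\times C\to\P^3\times C$, the blow-up along $C\times C$. The observation that makes this work is that this blow-up is simply $\mathrm{Bl}_C(\P^3)\times C$: the centre $C\times C\subset\P^3\times C$ is $\iota_C(C)\times C$ for $\iota_C\colon C\hookrightarrow\P^3$ the canonical embedding, so $\pi=\mathrm{bl}\times\mathrm{id}_C$, the exceptional divisor is $E_{C\times C}=E_C\times C$, and $j=j_{E_C}\times\mathrm{id}_C$, $\pi_E=\pi_{E_C}\times\mathrm{id}_C$, where $\mathrm{bl}\colon L_C\to\P^3$, $j_{E_C}\colon E_C\hookrightarrow L_C$, $\pi_{E_C}\colon E_C\to C$ are the data of the blow-up $L_C$. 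Consequently $j_*\pi_E^*(1)=[E_{C\times C}]=[E_C\times C]$ is the class $E$ pulled back via the first projection, i.e.\ $e$; $j_*\pi_E^*(p)$, for $p=[\{c_0\}\times C]$, is the class of the subvariety $j_{E_C}(\pi_{E_C}^{-1}(c_0))\times C$, i.e.\ the fibre class $F$ pulled back via the first projection, which is $f$; and $j_*\pi_E^*(\Delta_C)=\overline{\Delta}_C$ by definition.

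Next I invoke Proposition~\ref{pGettingRidOfEvenMore} with $X=\P^3\times C$ and $Z=C\times C$: it says that $\Chownum^*(L_C\times C)$ is generated as a ring by $\pi^*\Chownum^*(\P^3\times C)$ together with the classes $j_*\pi_E^*z$, $z\in\ZZ$, for any subset $\ZZ\subset\Chownum^*(C\times C)$ whose image generates $\Chownum^*(C\times C)/I$ as a $\Z$-module, where $I$ is the ideal generated by the image of $i^*\colon\Chownum^*(\P^3\times C)\to\Chownum^*(C\times C)$. For the base ring, the projective bundle formula (Proposition~\ref{pChowRingProjectiveBundle}) together with $\Chownum^*(C)=\Z[\bar P]/(\bar P^2)$ gives $\Chownum^*(\P^3\times C)\cong\Z[h,\bar P]/(h^4,\bar P^2)$, and since $\pi=\mathrm{bl}\times\mathrm{id}_C$ the subring $\pi^*\Chownum^*(\P^3\times C)$ is precisely the one generated by $h$ and $\bar P$.

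It remains to choose $\ZZ$. Since $i=\iota_C\times\mathrm{id}_C$ we have $i^*(\bar P)=P$ and $i^*(h)=6p$ ($C$ being a sextic curve in $\P^3$), so in particular $P\in I$ and hence $pP=p\cdot P\in I$. By Lemma~\ref{pCxCdsg} the classes $1$; $p,P,\Delta_C$; $pP$ form a $\Z$-basis of $\Chownum^*(C\times C)$, so $\ZZ=\{1,p,\Delta_C\}$ has the required property; combined with the three identifications of the first paragraph this shows that $\Chownum^*(L_C\times C)$ is generated as a ring by $\{h,e,f,\bar P,\overline{\Delta}_C\}$. I expect the only genuinely non-formal point to be the first one: recognising, through the combined blow-up diagram, that $L_C\times C=\mathrm{Bl}_C(\P^3)\times C$, so that the exceptional divisor splits as $E_C\times C$ and $j_*\pi_E^*(1)$, $j_*\pi_E^*(p)$ are the classes $e$ and $f$ pulled back from the first factor rather than some less transparent classes intrinsic to the blow-up. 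Everything else — the projective bundle formula, the structure of $\Chownum^*(C\times C)$ from Lemma~\ref{pCxCdsg}, and the verification that $P,pP\in I$ — is routine.
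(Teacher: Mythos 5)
Your proposal is correct and follows essentially the same route as the paper: apply Proposition~\ref{pGettingRidOfEvenMore} to the blow-up of $\P^3\times C$ along $C\times C$, identify $j_*\pi_E^*(1)=e$, $j_*\pi_E^*(p)=f$, $j_*\pi_E^*(\Delta_C)=\overline{\Delta}_C$, and take $\ZZ=\{1,p,\Delta_C\}$ using that $P$ lies in the ideal $I$. The only (immaterial) difference is that you put $pP\in I$ directly from $P\in I$, while the paper notes $p\Delta_C=P\Delta_C\in I$; your extra care in spelling out $L_C\times C=\mathrm{Bl}_C(\P^3)\times C$ and the product structure of the exceptional divisor is a welcome elaboration of what the paper leaves implicit.
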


\begin{proof}
By Lemma \ref{pCxCdsg}, $\Chownum^*(C \times C)$ is generated as an abelian group by
\[
	\{ 1, p, P, \Delta_C, P \Delta_C \} .
\]
Let $\widetilde{P} \in \Chownum^* (\P^3\times C)$ be the pullback of the class of a point on $C$ via the second projection $\P^3\times C \to C$. 
Since $P = i^*(\widetilde{P})$, $\Chownum^*(C \times C)$ is generated by
\[
	\{ 1,p,\Delta_C \}
\]
as a $\Chownum^*(\P^3 \times C)$-module. Noticing that
\begin{align*}
	j_*\pi_E^*(1) &= e, \\
	j_*\pi_E^*(p) & = f, \\
	j_*\pi_E^*(\Delta_C) & = \overline{\Delta}_C
\end{align*}	
and that $\Chownum^*(\P^3 \times C)$ is generated by $\{h,\widetilde{P}\}$ as a ring and $\pi^* (\widetilde{P})=\bar{P}$, we see that Proposition \ref{pGettingRidOfEvenMore} then gives the claim.
\end{proof}

\begin{proposition} \label{pGensLCxLC}
We have that
\[
 	\Chownum^* (L_C \times L_C)
\]
is generated as a ring by
\[
	\{h,e,f,H,E,F,D \},
\]
where $h, e, f, H, E, F$ are the classes as in Lemma \ref{lNumLC} coming from the two factors, and $D:=j'_* (\pi_E')^* \overline{ \Delta}_C$ is the class of the $\P^1\times\P^1$-bundle over  the diagonal in $C\times C$. 
\end{proposition}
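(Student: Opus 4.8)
The plan is to apply Proposition \ref{pGettingRidOfEvenMore} to the blowup $\pi'\colon L_C\times L_C = \mathrm{Bl}_{L_C\times C}(L_C\times\P^3)$, following the same template as the proof of Proposition \ref{pGensLCxC}. First I would record that $\Chownum^*(L_C\times\P^3) = \Chownum^*(L_C)[H]/(H^4)$ by the projective bundle formula (Proposition \ref{pChowRingProjectiveBundle}), where $H$ is the hyperplane class of $\P^3$; hence, using Lemma \ref{lNumLC} for the first factor, this ring is generated by $h,e,f,H$, and under $\pi'^*$ these become the first-factor classes $h,e,f$ and the second-factor class $H$ on $L_C\times L_C$.

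Next I would identify the ideal $I'\subset\Chownum^*(L_C\times C)$ generated by the image of $i'^*$. Since $i' = \mathrm{id}_{L_C}\times(C\hookrightarrow\P^3)$, one has $i'^*(h)=h$, $i'^*(e)=e$, $i'^*(f)=f$ and $i'^*(H)=6\bar P$ (the hyperplane of $\P^3$ restricts to the degree $6$ canonical class of the genus $4$ curve $C$), so $I'\supseteq (h,e,f)$. By Proposition \ref{pGensLCxC}, $\Chownum^*(L_C\times C)$ is generated as a ring by $h,e,f,\bar P,\overline{\Delta}_C$, hence modulo $I'$ by $\bar P$ and $\overline{\Delta}_C$ alone. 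The key claim is that $\{1,\bar P,\overline{\Delta}_C\}$ already generate $\Chownum^*(L_C\times C)/I'$ as a $\Z$-module, i.e.\ that the degree $\ge 2$ monomials $\bar P^2$, $\bar P\,\overline{\Delta}_C$, $\overline{\Delta}_C^{\,2}$ vanish modulo $I'$. Here $\bar P^2 = 0$ trivially, and using the inner blowup $L_C\times C = \mathrm{Bl}_{C\times C}(\P^3\times C)$ together with the multiplication rules of Proposition \ref{pChowRingBlowUp} I would compute $\bar P\,\overline{\Delta}_C = f\bar P$ and that $\overline{\Delta}_C^{\,2}$ is a multiple of $h^3\bar P$ (namely $6\,h^3\bar P$, reflecting $\Delta_C^2 = -6\,\mathrm{pt}$ on $C\times C$, which one reads off the term $-6\Delta_C^{-2}$ in $f_{C\times C}$, and the fact that $\zeta$ has degree $1$ on the fibres of $E_{C\times C}\to C\times C$); both lie in $(h,f)\subseteq I'$. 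This proves the claim.

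With the claim in hand, Proposition \ref{pGettingRidOfEvenMore} applied with $\ZZ=\{1,\bar P,\overline{\Delta}_C\}$ gives that $\Chownum^*(L_C\times L_C)$ is generated as a ring by $\pi'^*\Chownum^*(L_C\times\P^3)$ together with $j'_*(\pi'_E)^*(1)$, $j'_*(\pi'_E)^*(\bar P)$, $j'_*(\pi'_E)^*(\overline{\Delta}_C)$. The last equals $D$ by definition of $D$. For the other two I would use that the normal bundle of $L_C\times C$ in $L_C\times\P^3$ is pulled back from the second factor, so the exceptional divisor of $\pi'$ is $E_{L_C\times C}\cong L_C\times E_C$; therefore $j'_*(\pi'_E)^*(1) = [E_{L_C\times C}] = E$, and, tracking the point class $\bar P$ on $C$ to the fibre class of $E_C\to C$, $j'_*(\pi'_E)^*(\bar P) = F$. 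Combining with the generators $h,e,f,H$ from the first step yields the asserted generating set $\{h,e,f,H,E,F,D\}$.

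The step I expect to be the main obstacle is the second one: verifying that $\{1,\bar P,\overline{\Delta}_C\}$ span $\Chownum^*(L_C\times C)/I'$, which hinges on the two product computations $\bar P\,\overline{\Delta}_C = f\bar P$ and $\overline{\Delta}_C^{\,2}\in(h)$ — these require carefully pushing and pulling through the inner blowup diagram and invoking the self-intersection of the diagonal on $C\times C$. The bookkeeping in the last step identifying $j'_*(\pi'_E)^*(1)$ and $j'_*(\pi'_E)^*(\bar P)$ with $E$ and $F$ is the other place where care is needed, though it is more routine; everything else is formal.
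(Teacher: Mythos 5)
Your proposal is correct and follows essentially the same route as the paper: apply Proposition \ref{pGettingRidOfEvenMore} to the blowup $L_C\times L_C=\mathrm{Bl}_{L_C\times C}(L_C\times\P^3)$, check that $\{1,\bar P,\overline{\Delta}_C\}$ span $\Chownum^*(L_C\times C)$ modulo the ideal generated by the image of $(i')^*$ (the paper does this via $\bar P\,\overline{\Delta}_C=\bar P f$; your extra explicit verification that $\overline{\Delta}_C^{\,2}$ is a multiple of $h^3\bar P$ is a point the paper leaves implicit), and identify $j'_*(\pi'_E)^*(1)$, $j'_*(\pi'_E)^*(\bar P)$, $j'_*(\pi'_E)^*(\overline{\Delta}_C)$ with $E$, $F$, $D$.
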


\begin{proof}
Since $e,f,h$ are in the image of $(i')^*$ we have that $\Chownum^*(L_C \times C)$ is generated by
\[
	\{1,\overline{P},\overline{\Delta}_C,\overline{P}\, \overline{\Delta}_C \}
\]
as a $\Chownum^* (L_C \times \P^3)$-module. Furthermore 
\[
	\bar{P}\overline{\Delta}_C 
	= \bar{P}j_*\pi_E^*(\Delta_C) 
	= j_*\pi_E^*( P\Delta_C)
	= j_*\pi_E^*(Pp)
	= \bar{P}j_*\pi_E^*(p)
	=\bar{P}f
\]
so that $\Chownum^*(L_C \times C)$ is already generated by
\[
	\{1,\overline{P},\overline{\Delta_C} \}
\]
as a $\Chownum^* (L_C \times \P^3)$-module. Noticing that
\begin{align*}
	j'_* (\pi'_E)^* (1)  &= E\\
	j'_* (\pi'_E)^* (\bar{P}) &= F\\
	 j'_* (\pi'_E)^*(\overline{\Delta}_C) &=j'_* (\pi'_E)^* j_*\pi_E^*(\Delta_C) = D
\end{align*}
and that $\Chownum^*(L_C \times \P^3)$ is generated by $\{h,e,f,H\}$ as a ring, we see that the claim follows from Proposition \ref{pGettingRidOfEvenMore}.
\end{proof}

\begin{proposition} \label{pLCxLCdsg}
With the notation of the previous Proposition we have 
\[
	\Chownum^* (Y_1) 
	= \Chownum^* (L_C\times L_C) 
	= \Z[h, e, f, H, E, F, D]/ \mathrm{Ann}(f_{L_C\times L_C}),
\]
where
\small
\begin{align*}
f_{L_C\times L_C} = &f_{L_C} \cdot f'_{L_C} \\
 & + D(30e^{-2}E^{-1} + 30e^{-1}E^{-2}  + 6e^{-1}E^{-1}h^{-1} + 6e^{-1}E^{-1}H^{-1} + E^{-1}f^{-1} + e^{-1}F^{-1})\\
 &-6D^{-2}.
\end{align*}
\normalsize
\end{proposition}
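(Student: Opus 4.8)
The plan is to compute the dual socle generator $f_{L_C\times L_C}$ by combining the Künneth-type description of $\Chownum^*(L_C\times L_C)$ coming from its structure as an iterated blow-up with the explicit intersection numbers on $L_C$, $C$, and $C\times C$ already recorded in Lemmas \ref{lNumLC}, \ref{pCxCdsg} and the two blow-up formulas of Propositions \ref{pChowRingBlowUp} and \ref{pGettingRidOfEvenMore}. By Lemma \ref{lChowNumDualSocle}, it suffices to determine $\deg(\alpha^a)$ for every monomial $\alpha^a$ of top degree $6$ in the generators $h,e,f,H,E,F,D$, i.e.\ to compute the top intersection pairing on $Y_1$ in these generators; the stated formula is simply the generating function of these degrees packaged as an element of $\Q[h^{-1},e^{-1},f^{-1},H^{-1},E^{-1},F^{-1},D^{-1}]$.

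First I would treat the "product part" $f_{L_C}\cdot f'_{L_C}$. The subring generated by $h,e,f$ (classes from the first factor) and $H,E,F$ (classes from the second factor) is, by Proposition \ref{pTotaroLinearSchemes} applied to $L_C$ (a linear variety), exactly $\Chownum^*(L_C)\otimes\Chownum^*(L_C)$ with the product pairing; hence monomials involving only these six generators contribute the product $f_{L_C}(h,e,f)\cdot f_{L_C}(H,E,F)$, which is the first summand. Next I would handle the monomials involving $D$. Here the key computational inputs are: $D = j'_*(\pi'_E)^* j_*\pi_E^*(\Delta_C)$ is the class of a $\P^1\times\P^1$-bundle over the diagonal $\Delta_C\subset C\times C$; its self-intersection is governed by the last multiplication rule of Proposition \ref{pChowRingBlowUp} (the "$j_*\gamma\cdot j_*\delta = -j_*(\gamma\delta\zeta)$" rule, applied twice — once for each of the two blow-ups in the combined diagram), which is the source of the $-6D^{-2}$ term (the $6 = -\chi(C)$ coming through $[\Delta_C]^2 = -\chi(C)[\mathrm{pt}]$ on $C\times C$ as in Lemma \ref{pCxCdsg}); and the projection-formula rule $\pi^*(\alpha)\cdot j_*(\gamma) = j_*(\pi_E^* i^*\alpha\cdot\gamma)$, which lets me reduce any monomial $D\cdot(\text{product of }h,e,f,H,E,F)$ to a pushforward from the $\P^1\times\P^1$-bundle over $\Delta_C$, i.e.\ to an intersection number on $C$ times the restriction coefficients. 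The restrictions of $h,e,f$ (resp. $H,E,F$) to this bundle are computed exactly as the restrictions $\iota_{S,L_C}^*(H,E,F)$ in Lemma \ref{lRestrictionsAmongChow}, adapted to the curve setting: $h\mapsto \bar P$, $e\mapsto 3\bar P$ (the fibre meets $E$ in degree $\deg N_{C/\P^3} = \deg(3,3)$–relevant data, giving the coefficients $6$ and $30$ from $c_1,c_2$ of the normal bundle), $f\mapsto F$-fibre class, etc.; matching these against the listed coefficients $30e^{-2}E^{-1}+30e^{-1}E^{-2}+6e^{-1}E^{-1}h^{-1}+6e^{-1}E^{-1}H^{-1}+E^{-1}f^{-1}+e^{-1}F^{-1}$ is then a finite bookkeeping check.

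The main obstacle I expect is precisely this bookkeeping: correctly tracking the Chern classes $c_1(\NN_{C/\P^3})=6[\mathrm{pt}]$, $c_2$ and the universal quotient bundle $\QQ$ through \emph{two successive} blow-ups in the combined diagram, so that the coefficients in front of the $D$-monomials — the two $30$'s, the two $6$'s, and the two "off-diagonal" terms $E^{-1}f^{-1}$, $e^{-1}F^{-1}$ — come out with the right signs and multiplicities, and verifying that no monomial of the form $D\cdot(\text{degree-}4\text{ mixed term})$ has been omitted. A secondary subtlety is confirming that $D^2$ pairs to exactly $-6$ against the appropriate complementary class (equivalently that $D^{-2}$ appears with coefficient $-6$ and no higher power of $D^{-1}$ survives), which follows because $D^2$ lands in the image of $j'_*$ applied to something supported over $\Delta_C$ and one then uses $[\Delta_C]^2=-6[\mathrm{pt}_C]$ together with the fact that $D^3 = 0$ for dimension reasons on the four-fold $Y_1$ restricted appropriately. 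Once all top-degree products are in hand, Lemma \ref{lChowNumDualSocle} gives the presentation $\Z[h,e,f,H,E,F,D]/\mathrm{Ann}(f_{L_C\times L_C})$ immediately, completing the proof.
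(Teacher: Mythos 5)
Your overall skeleton coincides with the paper's proof: split the degree-$6$ monomials by the power of $D$, obtain $f_{L_C}\cdot f'_{L_C}$ from the monomials in $h,e,f,H,E,F$ alone, compute the terms linear in $D$ by restricting to the $\P^1\times\P^1$-bundle $D$ over the diagonal, and get $D^2=-6$ by pushing the rule $j_*\gamma\cdot j_*\delta=-j_*(\gamma\delta\zeta)$ through the two blow-ups and using $\Delta_C^2=-6$. However, two of the ingredients you invoke are wrong as stated. First, $L_C$ is \emph{not} a linear variety, so Proposition \ref{pTotaroLinearSchemes} does not apply: its Chow groups contain the Jacobian of the genus-$4$ curve $C$ through the exceptional divisor, and the failure of the K\"unneth formula for $L_C\times L_C$ is precisely the reason the extra generator $D$ is needed at all (Propositions \ref{pCxC} and \ref{pGensLCxLC}). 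What your $c=0$ step actually requires is only that the degree of a top-dimensional product of pullbacks from the two factors is the product of the degrees on the factors, which is the projection formula and needs no K\"unneth-type statement.

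Second, and more consequentially for the ``bookkeeping'' you defer, the restrictions to $D$ that generate the middle coefficients are misstated. Writing $i\colon D\to L_C\times L_C$, with $P$ the pullback of a point of $\Delta_C\cong C$ and $\gamma,\Gamma$ the two relative hyperplane classes (satisfying $\gamma^2=-30\gamma P$, $\Gamma^2=-30\Gamma P$ because $\deg\NN_{C/\P^3}=30$), the correct values are $i^*(h)=6P$ (the degree of $C\subset\P^3$ is $6$), $i^*(e)=-\gamma$, $i^*(f)=-\gamma P$, and symmetrically for $H,E,F$. Your proposed values $h\mapsto\bar P$, $e\mapsto 3\bar P$, taken by analogy with $\iota_{S,L_C}^*$ in Lemma \ref{lRestrictionsAmongChow}, describe restriction to the quadric surface $S$ (where $E\cap S=C$ has bidegree $(3,3)$), which is a different geometric situation; followed literally, this bookkeeping would not reproduce the coefficients $30,30,6,6,1,1$ in $f_{L_C\times L_C}$. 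Minor further slips: the monomials complementary to $D$ have degree $3$, not $4$; $Y_1$ is a sixfold, not a fourfold; and $[\Delta_C]^2=\chi(C)=-6$, not $-\chi(C)$ (you do use the correct value $-6$ afterwards).
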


\begin{proof}
We have to calculate the intersection numbers of all monomials of degree $6$ in the generators. Here $\{H,E,h,e\}$ have degree $1$, $\{F,f\}$ have degree $2$ and $D$ has degree $3$. We can write every such polynomial as $mMD^c$ where $m$ is a monomial in the generators of $\Chownum^*(L_C)$ of the first factor (lower case letters) and $M$ is a monomial in the generators of $\Chownum^*(L_C)$ of the second factor (upper case letters). We then have the following cases
\begin{itemize}
\item[$c=0:$] Here the monomial is $mM$ and the intersection number of the product is the product
of the intersection numbers on the first and second $L_C$ respectively. These intersection numbers are calculated by the dual socle generators of the factors and we obtain the summand
\[
	f_{L_C} \cdot f'_{L_C}
\]
of $f_{L_C \times L_C}$.
\item [$c=1:$] We can calculate these intersection numbers on $D$. We recall that $D$ is a $\P^1 \times \P^1$ bundle on $C$, where we identify $C$ with the diagonal of $C \times C$. The intersection ring
$\Chownum^*(D)$ is generated by the pullback of a point $P$ on $D$, and the relative hyperplane class $\gamma$ and $\Gamma$ of the first and second factor. We have $\gamma^2 = -30\gamma P$ and
$\Gamma^2 = -30 \Gamma P$. Let $i$ be the inclusion of $D$ in $L_C \times L_C$. The pullbacks of the generators to $D$ are:
\begin{align*}
	i^*(h) &= 6P &   i^*(H) &= 6P \\ 
	i^*(e) &= -\gamma & i^*(E) &= -\Gamma \\
	i^*(f) &= -\gamma P & i^*(F) &= -\Gamma P
\end{align*}
The class of a point in $D$ is $\gamma\Gamma P$ so the non-zero intersection numbers are collected
in the summand \small
\[
	D(30e^{-2}E^{-1} + 30e^{-1}E^{-2}  + 6e^{-1}E^{-1}h^{-1} + 6e^{-1}E^{-1}H^{-1} + E^{-1}f^{-1} + e^{-1}F^{-1}) .
\] \normalsize
\item [$c=2:$] We compute
\begin{align*}
	D^2 
	&= \bigl(j'_* (\pi_E')^* j_* \pi_E^* \Delta_C \bigr)^2 \\	
	&= -j'_*\Bigl(\Gamma 
	\bigl((\pi_E')^* j_* \pi_E^* \Delta_C \bigr)^2
	\Bigr)  \\	
	&= -j'_*\Bigl(\Gamma 
	(\pi_E')^* \bigl(j_* \pi_E^* \Delta_C \bigr)^2
	\Bigr)  \\	
	&= j'_*\Bigl(\Gamma 
	(\pi_E')^* j_* \bigl( \gamma (\pi_E^* \Delta_C)^2
	\bigr)\Bigr)  \\	
	&= j'_*\Bigl(\Gamma 
	(\pi_E')^* j_* \bigl( \gamma \pi_E^* ( \Delta_C^2 )
	\bigr)\Bigr)  .
\end{align*}
Since $\Delta_C^2 = -6$, this proves that also $D^2 = -6$. 
\end{itemize}
\end{proof}

\subsection{The numerical Chow ring of $Y_2$}

\noindent
Next we turn to $Y_2$, the blowup of $L_C \times Q$ in $S \times S$, as in the
blowup diagram:
\[
\xymatrix{
N \ar[r]^{j}\ar[d]^{\pi_N} & Y_2\ar[d]^{\pi}\\
S \times S \ar[r]^{i} & L_C \times Q
}
\]

\begin{lemma}\label{lNumN}
We have
\[
\Chownum^*(N) = \Chownum^*(S\times S) [\xi]/ \bigl( (\xi - r_1-r_2)(\xi +R_1+R_2) \bigr) .
\]
Here $\xi$ is the relative hyperplane class of the projectivisation of $\NN_{(S\times S)/ (L_C\times Q)}$, which is naturally isomorphic to $N$.
\end{lemma}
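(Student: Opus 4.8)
The plan is to identify $N$ with the projectivisation $\P(\NN_{(S\times S)/(L_C\times Q)})$ and to apply the projective bundle formula, Proposition \ref{pChowRingProjectiveBundle}. Since $S\times S$ has codimension $2$ in $L_C\times Q$ (both factors drop dimension by one), the normal bundle $\NN := \NN_{(S\times S)/(L_C\times Q)}$ has rank $2$, so $N=\P(\NN)$ is a $\P^1$-bundle over $S\times S$ and Proposition \ref{pChowRingProjectiveBundle} gives
\[
\Chownum^*(N) \simeq \Chownum^*(S\times S)[\xi]/(\xi^2 + c_1(\NN)\xi + c_2(\NN)),
\]
with $\xi = c_1(\OO_{\P(\NN)}(1))$. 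It therefore remains to compute $c_1(\NN)$ and $c_2(\NN)$ and to check that $\xi^2 + c_1(\NN)\xi + c_2(\NN)$ factors as $(\xi - r_1 - r_2)(\xi + R_1 + R_2)$; expanding the latter, this is equivalent to the two identities $c_1(\NN) = -(r_1+r_2) + (R_1+R_2)$ and $c_2(\NN) = -(r_1+r_2)(R_1+R_2)$, i.e.\ to the statement that $\NN$ is numerically the direct sum of a line bundle with class $r_1+r_2$ and one with class $-(R_1+R_2)$.

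The key step is the normal bundle computation. The inclusion $S\times S \subset L_C\times Q$ is a product of the inclusions $S\subset L_C$ and $S\subset Q$, so there is a natural splitting
\[
\NN_{(S\times S)/(L_C\times Q)} \;\simeq\; \mathrm{pr}_1^*\NN_{S/L_C} \,\oplus\, \mathrm{pr}_2^*\NN_{S/Q}.
\]
Here I use the upper-case letters $R_1, R_2$ for the rulings on the second copy of $S$ (sitting in $Q$) and lower-case $r_1, r_2$ for the first copy (sitting in $L_C$), consistently with the notation of Lemma \ref{lRestrictionsAmongChow}. First I would compute $\NN_{S/Q}$: since $S$ is a hyperplane section of the quadric $Q$, by adjunction $\NN_{S/Q}=\OO_Q(1)|_S$, whose class is $\iota_{S,Q}^*S = R_1+R_2$ by Lemma \ref{lRestrictionsAmongChow}. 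Next, $\NN_{S/L_C}$: recall $S\subset L_C$ is the strict transform of the quadric surface $S\subset L\simeq\P^3$ under the blowup of $\P^3$ along the $(3,3)$-curve $C\subset S$. Its class in $L_C$ is $2H - E$ (Lemma \ref{lRestrictionsAmongChow}, third formula), and by adjunction $\NN_{S/L_C} = \OO_{L_C}(2H-E)|_S$, whose class restricts via $\iota_{S,L_C}^*$ to $2(r_1+r_2) - 3(r_1+r_2) = -(r_1+r_2)$, again using Lemma \ref{lRestrictionsAmongChow}. Therefore $c_1(\NN) = (R_1+R_2) - (r_1+r_2)$ and $c_2(\NN) = -(r_1+r_2)(R_1+R_2)$ as the product of the two pulled-back line bundle classes, which is exactly the desired factorisation of the Chern polynomial.

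The main obstacle I anticipate is bookkeeping rather than anything deep: keeping the two copies of $S$ — and hence the two pairs of ruling classes $(r_1,r_2)$ versus $(R_1,R_2)$ — straight under the splitting of the normal bundle, and correctly invoking the adjunction-type computation of $\NN_{S/L_C}$ for the \emph{strict transform} of $S$ inside the blowup $L_C$ (as opposed to $S\subset\P^3$, whose normal bundle would be $\OO_{\P^3}(2)|_S$ with class $2(r_1+r_2)$). Once one records that the class of the strict transform is $2H-E$ and restricts correctly, the factorisation is immediate. No further subtlety arises because $S\times S$ is smooth and the ambient space $L_C\times Q$ is smooth, so Proposition \ref{pChowRingProjectiveBundle} applies verbatim after reducing mod numerical equivalence.
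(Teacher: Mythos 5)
Your proposal is correct and follows essentially the same route as the paper: identify $N$ with $\P\bigl(\NN_{(S\times S)/(L_C\times Q)}\bigr)$, split the normal bundle as $\mathrm{pr}_1^*\NN_{S/L_C}\oplus\mathrm{pr}_2^*\NN_{S/Q}$ with classes $-(r_1+r_2)$ and $R_1+R_2$, and apply Proposition \ref{pChowRingProjectiveBundle}. The paper simply states the resulting splitting $\OO(-1,-1,0,0)\oplus\OO(0,0,1,1)$ without proof, whereas you supply the adjunction computation (including the strict-transform class $2H-E$) that justifies it.
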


\begin{proof}
We have 
\[
\NN_{(S\times S)/ (L_C\times Q)} \simeq \OO_{(\P^1\times\P^1)\times (\P^1\times\P^1)} (-1,-1, 0, 0) \oplus \OO_{(\P^1\times\P^1)\times (\P^1\times\P^1)} (0,0,1,1).
\]
The assertion then follows from Proposition \ref{pChowRingProjectiveBundle}. 
\end{proof}

\begin{proposition}\label{pGeneratorsY2}
Let $\sheaf{X}$ be a basis, consisting of homogeneous elements, of the free $\Z$-module $\Chownum^*(N)$. Consider the homomorphism of graded rings 
\[
\Phi\colon \Z [h,e,f, S, L, N_{x}]_{x\in \sheaf{X}} \to \Chownum^*(Y_2)
\]
where the variables are mapped to the corresponding classes in $\Chownum^*(Y_2)$, where the class corresponding to $N_x$ is $j_* (x)$. 
For an arbitrary
\[
x =\sum_i n_i x_i \in \Chownum^* (N), \quad x_i \in \sheaf{X}, n_i \in \Z
\]
we set
\[
N_x := \sum_i n_i N_{x_i} \in \Z [h,e,f, S, L, N_{x}]. 
\]
Let $\sheaf{Y}=\{ h,e,f, S, L \}$.  Let $\ZZ$ be the basis of the $\Z$-module $\Chownum^* (S\times S)$ consisting of all nonzero monomials in $r_1, r_2, R_1, R_2$. For $z\in \ZZ$ we choose an element $\overline{z}\in \Z [h,e,f, S, L, N_{x}]$  such that $\Phi (\overline{z}) = \pi^* i_* (z)$ using Lemma \ref{lRestrictionsAmongChow}.

Let $I$ be the ideal in $\Z [h,e,f, S, L, N_{x}]$ generated by the elements
\begin{align*}
\alpha \cdot N_{\gamma } - N_{(j^* \Phi (\alpha)) . \gamma} &\quad \text{for $\alpha\in\sheaf{Y}, \gamma\in \sheaf{X}$,}\\
N_{\gamma} \cdot N_{\delta} + N_{\gamma \cdot \delta\cdot\xi}  &\quad \text{for $\gamma, \delta\in \sheaf{X}$},\\
N_{c_{1}(\QQ ) \pi_N^*(z)}  - \overline{z} & \quad \text{for $z\in \ZZ$}
\end{align*}
where
\[
c_1 (\QQ ) = \xi + (-r_1-r_2+R_1+R_2).
\]
In other words, $I$ is the ideal of relations from Proposition \ref{pChowRingBlowUp} with $\zeta=\xi$.

Then we have
\[
\Chownum^* (Y_2) = \Z [h,e,f, S, L, N_{x}]/I .
\]
\end{proposition}

\begin{proof}
We want to apply Lemma \ref{lEnoughRelations}. It follows from Proposition \ref{pChowRingBlowUp} that $I \subset \ker \Phi$. A Macaulay2 computation done in \cite[Proposition 6.6 in \textsf{prelogYparts.m2}]{BBG-M2} shows that $(\Z [h,e,f, S, L, N_{x}]/I )_6$ is a free $\Z$-module of rank $1$ generated by an element that maps to a class of a point under $\Phi$. Lemma \ref{lEnoughRelations} then gives us a formula for $f_{Y_2}$ such that $\ker \Phi = \mathrm{Ann} (f_{Y_2})$. A Macaulay2 computation done in \cite[Proposition 6.6 in \textsf{prelogYparts.m2}]{BBG-M2} then shows that $\mathrm{Ann} (f_{Y_2})=I$.
\end{proof}

\subsection{The numerical Chow ring of $Y_3$}

\noindent
Next we turn to $Y_3$, the blowup of $Q\times L_C$ in $S \times S$, as in the
blowup diagram:
\[
\xymatrix{
M \ar[r]^{j}\ar[d]^{\pi_M} & Y_3\ar[d]^{\pi}\\
S \times S \ar[r]^{i} &   Q\times  L_C 
}
\]

\begin{lemma}\label{lCompareMN}
Let $\eta$ be the relative hyperplane class of the projectivisation of $\NN_{(S\times S)/ (Q\times L_C)}$, which is naturally isomorphic to $M$. 
Then there is a unique isomorphism of rings 
\[
\mu^*\colon \Chownum^* M \to \Chownum^* N, 
\]
mapping $\eta$ to $\xi -r_1-r_2+R_1+R_2$ and making the following diagram commutative:
\[
\xymatrix{
\Chownum^* M \ar[rr]^{\mu^*} & & \Chownum^* N\\
 & \Chownum^* (S\times S)\ar[lu]^{\pi^*_M}\ar[ru]_{\pi^*_N} & 
}
\]
\end{lemma}

\begin{proof}
We have 
\[
\EE_N:=\NN_{(S\times S)/ (L_C\times Q)} \simeq \OO_{(\P^1\times\P^1)\times (\P^1\times\P^1)} (-1,-1, 0, 0) \oplus \OO_{(\P^1\times\P^1)\times (\P^1\times\P^1)} (0,0,1,1)
\]
and
\[
\EE_M:=\NN_{(S\times S)/ (Q\times L_C)} \simeq \OO_{(\P^1\times\P^1)\times (\P^1\times\P^1)} (1,1, 0, 0) \oplus \OO_{(\P^1\times\P^1)\times (\P^1\times\P^1)} (0,0,-1,-1).
\]
Hence 
\[
(\ast) \quad \EE_N \otimes \LL \simeq \EE_M
\]
with $\LL = \OO_{(\P^1\times\P^1)\times (\P^1\times\P^1)} (1,1, -1, -1)$. Observe that by definition $M = \P (\EE_M)$, $N=\P (\EE_N)$. Let
\[
\mu\colon N \to M
\]
be the isomorphism induced by $(\ast )$.

Then 
\begin{align*}
(\pi_{M})_* (\OO_M (\eta )) &= \EE_M^{\vee} \\
&= \EE_N^{\vee}\otimes \LL^{\vee}\\
&  =(\pi_N)_* (\OO_N(\xi )) \otimes \LL^{\vee} \\
&  =(\pi_M\circ \mu)_* (\OO_N(\xi)) \otimes \LL^{\vee} \\
&= (\pi_M)_*\bigl( \mu_* (\OO_N(\xi )) \otimes \pi_M^* (\LL^{\vee} )\bigr).
\end{align*}

On the other hand, we know from the structure of the Picard group of a projective bundle that
\[
\OO_M (\eta ) \simeq \mu_* (\OO_N(\xi )) \otimes \pi_M^* ((\LL')^{\vee})
\]
for some line bundle $\LL'$ on $S\times S$ since $\OO_M (\eta )$ and $\mu_* (\OO_N(\xi))$ restrict to $\OO (1)$ on every fibre of $\pi_M$. Therefore both $\LL$ and $\LL'$ are line bundles on $S\times S$ with
\[
\EE_N \otimes \LL \simeq \EE_M, \quad \EE_N \otimes \LL' \simeq \EE_M
\]
and passing to the determinants on both sides we get $\LL^{\otimes 2}=(\LL')^{\otimes 2}$ which shows $\LL \simeq \LL'$ since $\mathrm{Pic} (S\times S)$ is torsionfree. 
Hence 
\[
\eta =\mu_*( \xi )- r_1-r_2+R_1+R_2.
\]
Now using $\mu_* =(\mu^{-1})^*$ gives the result.
\end{proof}

\begin{proposition}\label{pGeneratorsY3}
We have
\[
\Chownum^* (Y_3) = \Z [s, l, H, E, F, M_{x}]/J
\]
where $x$ runs over a generating set of $\Chownum^*(N)$ as a $\Z$-module, $M_x :=j_* \circ \mu_* (x)$, and $J$ is the ideal of relations derived from Proposition \ref{pChowRingBlowUp} with $\zeta=\xi - r_1-r_2+R_1+R_2$ in the same way as in Proposition \ref{pGeneratorsY2}.
\end{proposition}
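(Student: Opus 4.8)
The plan is to run the same argument as for Proposition \ref{pGeneratorsY2}, with the extra input of the comparison isomorphism of Lemma \ref{lCompareMN}. Recall that $Y_3 = \mathrm{Bl}_{S\times S}(Q\times L_C)$, that $S\times S$ has codimension $2$ in $Q\times L_C$, and that the exceptional divisor is $M = \P(\NN_{(S\times S)/(Q\times L_C)})$ with inclusion $j\colon M\hookrightarrow Y_3$.

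First I would note that $Q$, being a smooth quadric threefold, admits an algebraic cell decomposition and is therefore a linear variety in the sense of Proposition \ref{pTotaroLinearSchemes}; hence a K\"unneth formula holds for $Q\times L_C$, and $\Chownum^*(Q\times L_C)$ is generated as a ring by the classes $s,l$ pulled back from $Q$ (Lemma \ref{lNumQ}) together with $H,E,F$ pulled back from $L_C$ (Lemma \ref{lNumLC}). Then I would apply Proposition \ref{pChowRingBlowUp} to the blow-up $\pi\colon Y_3\to Q\times L_C$: this gives that $\Chownum^*(Y_3)$ is generated over $\pi^*\Chownum^*(Q\times L_C)$ by the classes $j_*\Chownum^*(M)$, and that the complete set of relations is the one produced there from the exact sequence and the three multiplication rules, in which the distinguished class is $\zeta = c_1(\OO_{\P(\NN_{(S\times S)/(Q\times L_C)})}(1))$, namely the relative hyperplane class $\eta$ on $M$.

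The remaining step is to translate everything through Lemma \ref{lCompareMN}: the isomorphism $\mu\colon M\to N$ identifies $\Chownum^*(M)$ with $\Chownum^*(N)$, so that $j_*\Chownum^*(M)$ is spanned over $\Z$ by the classes $M_x = j_*\circ\mu_*(x)$ as $x$ runs over a generating set of the $\Z$-module $\Chownum^*(N)$, and the same lemma identifies $\eta$ with $\xi - r_1 - r_2 + R_1 + R_2$. Substituting this value of $\zeta$ into the multiplication rules and into the map $\varphi$ of Proposition \ref{pChowRingBlowUp}, and reducing the resulting classes on $N$ by means of the presentation $\Chownum^*(N) = \Chownum^*(S\times S)[\xi]/\bigl((\xi - r_1 - r_2)(\xi + R_1 + R_2)\bigr)$ of Lemma \ref{lNumN}, turns the ideal of relations into precisely the ideal $I$ of the statement, yielding the claimed presentation.

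The main point requiring care — and the only one I'd expect to pose any difficulty — is checking that Totaro's K\"unneth isomorphism for $\mathrm{CH}_*(Q\times L_C)$ descends to numerical equivalence and needs no generators beyond $\{s,l,H,E,F\}$. This follows because $Q$ has a full algebraic cell decomposition, so $\mathrm{CH}^*(Q) = \Chownum^*(Q)$ is torsion-free with perfect intersection pairing and its classes stay numerically independent in the product, giving $\Chownum^*(Q\times L_C)\cong \Chownum^*(Q)\otimes_\Z\Chownum^*(L_C)$ with exactly the stated generators; everything after that is the formal bookkeeping of Proposition \ref{pChowRingBlowUp} together with the substitution from Lemma \ref{lCompareMN}.
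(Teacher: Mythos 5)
Your proposal is correct and follows the same route as the paper, whose proof simply invokes the K\"unneth formula for $Q\times L_C$ (via Proposition \ref{pTotaroLinearSchemes}) together with Lemma \ref{lCompareMN} and the blow-up presentation of Proposition \ref{pChowRingBlowUp}. You merely spell out the details the paper leaves implicit (linearity of $Q$, the substitution $\eta=\xi-r_1-r_2+R_1+R_2$, and the passage to numerical equivalence), so there is nothing substantively different to compare.
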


\begin{proof}
After using Lemma \ref{lCompareMN} the same proof as that for Proposition \ref{pGeneratorsY2} applies mutatis mutandis.
\end{proof}

\subsection{The numerical Chow ring of $Y_4$}\label{ssY4}
Since $Y_4=Q\times Q$, this can be computed by the K\"unneth formula. $\Chownum^* (Y_4)$ is generated by $s, l, S, L$. 

\subsection{The numerical Chow rings of $Y_{ij}, Y_{ijk}$}\label{ssYijk}
The numerical Chow rings of double and triple intersections can be computed by the K\"unneth formula except for $\Chownum^* (Y_{23})$ which is $\Chownum^* (N)$. 

\subsection{Computing pushforwards and pullbacks via $\iota_{\{i,j,k\}, \{a,b\}} \colon Y_{ijk}\to Y_{ab}$}

These pushforwards and pullbacks are all easy to obtain using the K\"unneth formula except for $\iota_{\{1,2,3\},\{2,3\}}$ and $\iota_{\{2,3,4\},\{2,3\}}$.

\begin{lemma}\label{lPushPullSection}
Let $\EE$ be a vector bundle on a smooth projective variety $X$. Consider the diagram
\[
\xymatrix{
\P (\EE ) \ar[d]_{\pi}\\
X \ar@/_1pc/[u]_{\sigma}
}
\]
where $\sigma$ is a section and $\Sigma=\sigma (X)$. Then \[ \sigma_* (x) =\pi^* (x) \cdot \Sigma \] for $x\in \Chownum^* (X)$. For $y\in \Chownum^* (\P (\EE ))$ we have
\[
\sigma^* (y) =\pi_* (y\cdot \Sigma ).
\]
If $\EE$ is rank two and $y= \zeta \cdot c + d$ with $c,d\in \pi^* \Chownum^* (X)$ and $\zeta$ the relative hyperplane class, then 
\[
\pi_* (y) =c.
\]
\end{lemma}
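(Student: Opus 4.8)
The plan is to deduce all three formulas from the projection formula together with the two structural identities $\pi\circ\sigma=\id_X$ and $\sigma_*(1)=\Sigma$, all of which descend to numerical equivalence since $\pi$ is flat and $\sigma$ is a regular embedding (a section of the smooth morphism $\pi$), so pullback and pushforward are defined and compatible with the numerical quotient of the Chow groups.

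First I would prove $\sigma_*(x)=\pi^*(x)\cdot\Sigma$: because $\sigma$ is a section we have $\sigma^*\pi^*=\id$ on $\Chownum^*(X)$, hence $x=\sigma^*(\pi^*x)$, and applying the projection formula for the closed embedding $\sigma$ gives $\sigma_*(x)=\sigma_*\bigl(\sigma^*(\pi^*x)\cdot 1\bigr)=\pi^*(x)\cdot\sigma_*(1)=\pi^*(x)\cdot\Sigma$. For the second formula I would run the projection formula the other way: writing $\Sigma=\sigma_*(1)$, we get $y\cdot\Sigma=y\cdot\sigma_*(1)=\sigma_*(\sigma^*y)$, and pushing forward by $\pi$ and using functoriality $\pi_*\sigma_*=(\pi\circ\sigma)_*=\id_*$ yields $\pi_*(y\cdot\Sigma)=\sigma^*(y)$. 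For the last formula, when $\EE$ has rank two the projective bundle $\P(\EE)\to X$ is a $\P^1$-bundle, so the standard pushforward rule for projective bundles gives $\pi_*(1)=0$ and $\pi_*(\zeta)=1$; writing $c=\pi^*c'$ and $d=\pi^*d'$ and applying the projection formula, $\pi_*(y)=\pi_*(\zeta\cdot\pi^*c')+\pi_*(\pi^*d')=c'\cdot\pi_*(\zeta)+d'\cdot\pi_*(1)=c'$, which under the identification $\pi^*\Chownum^*(X)\simeq\Chownum^*(X)$ (recall $\pi^*$ is injective) reads $\pi_*(y)=c$.

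There is essentially no obstacle here; the only points worth a sentence are that the projection formula, the functoriality of pushforward, and the projective-bundle pushforward formula all hold modulo numerical equivalence, and that the equality $\pi_*(y)=c$ in the third statement is a mild abuse of notation, identifying the element $c\in\pi^*\Chownum^*(X)$ with its unique preimage $c'\in\Chownum^*(X)$.
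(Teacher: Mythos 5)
Your proof is correct, and it is exactly the standard argument (projection formula plus $\pi\circ\sigma=\id_X$ and the $\P^1$-bundle pushforward rules $\pi_*(1)=0$, $\pi_*(\zeta)=1$) that the paper leaves implicit, since its own proof of this lemma is just ``This is clear.'' No gaps: all the ingredients you invoke do descend to $\Chownum^*$ for smooth projective varieties, as you note.
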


\begin{proof}
All assertions follow directly from the fact that $\pi\mid_{\Sigma} \colon \Sigma \to X$ and $\sigma \colon X \to \Sigma$ are inverse isomorphisms.
\end{proof}

\begin{lemma}\label{lProjBundleClasses}
Let $\LL_1$ and $\LL_2$ be line bundles over a projective variety $X$ and let 
\[
\pi\colon \P=\P (\LL_1 \oplus \LL_2) \to X
\]
be the projective bundle of lines in the fibres of $\LL_1 \oplus \LL_2$. Denote by $\Sigma_1, \Sigma_2$ the sections of this bundle corresponding to $\P (\LL_1)$ and $\P (\LL_2)$, the lines contained in the fibres of $\LL_1$ and $\LL_2$, respectively. Then
\[
\OO_{\P}(\Sigma_i) =\OO_{\P}(1) \otimes \pi^* \LL_j
\]
for $\{i,j\}=\{1,2\}$.
\end{lemma}

\begin{proof}
The relative Euler sequence for the relative tangent bundle $\TT_{\P}$ 
\[
0 \to \OO_{\P} \to \pi^* (\LL_1\oplus \LL_2) \otimes \OO_{\P}(1) \to \TT_{\P} \to 0
\]
restricted to $\Sigma_i$ gives
\[
0\to \OO_{\Sigma_i} \to  (\LL_1\oplus \LL_2) \otimes \LL_i^{\vee} \to \NN_{\Sigma_i/\P}\to 0
\]
where $\NN_{\Sigma_i/\P}$ is the normal bundle of $\Sigma_i$ in $\P$; here we used $\OO_{\P}(1)\mid_{\Sigma_i} \simeq \LL_i^{\vee}$, which is correct since we are considering projective bundles of \emph{lines} in the fibres of vector bundles throughout. Thus
\[
\NN_{\Sigma_1/\P}\simeq \LL_2\otimes \LL_1^{\vee}, \quad \NN_{\Sigma_2/\P}\simeq \LL_1\otimes \LL_2^{\vee}. 
\]
On the other hand,
\[
\OO_{\P}(\Sigma_i) =\OO_{\P}(1) \otimes \pi^* \MM_i
\]
for some line bundles $\MM_i$ on the base $X$. Restricting both sides to $\Sigma_i$ we find
\[
\NN_{\Sigma_i/\P} = \LL_i^{\vee} \otimes \MM_i \simeq \LL_j \otimes \LL_i^{\vee}
\]
for $\{i,j\}=\{1,2\}$. It follows
\[
\OO_{\P}(\Sigma_i) =\OO_{\P}(1) \otimes \pi^* \LL_j
\]
as desired. 
\end{proof}

\begin{proposition}\label{pPushPull23}
We have
 \begin{align*}
 [\iota_{\{1,2,3\},\{2,3\}} (S\times S)] &=\xi +R_1+R_2 ,\\
 [\iota_{\{2,3,4\},\{2,3\}} (S\times S)] &=\xi -r_1-r_2  
\end{align*}
as classes in $\Chownum^* (N)$. 

Together with Lemma \ref{lPushPullSection}, this gives a complete description of pushforwards and pullbacks via $\iota_{\{1,2,3\},\{2,3\}}$ and $\iota_{\{2,3,4\},\{2,3\}}$.
\end{proposition}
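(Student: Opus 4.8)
The plan is to identify the two sections of the $\P^1$-bundle $N = \P(\NN_{(S\times S)/(L_C\times Q)}) \to S\times S$ that correspond to the images of $S\times S$ under the triple-to-double inclusions, and then read off their classes from the projective-bundle presentation in Lemma \ref{lNumN}. Recall from Lemma \ref{lNumN} that $\NN_{(S\times S)/(L_C\times Q)} \simeq \OO(-1,-1,0,0)\oplus\OO(0,0,1,1)$ on $S\times S = (\P^1\times\P^1)\times(\P^1\times\P^1)$, so $N$ carries two tautological sub-line-bundle inclusions, giving two disjoint sections $\Sigma_1, \Sigma_2 \subset N$ whose union is the locus where $\xi$ specializes: in the Chow ring $\Chownum^*(N) = \Chownum^*(S\times S)[\xi]/\bigl((\xi - r_1-r_2)(\xi+R_1+R_2)\bigr)$, the class of $\Sigma_1$ is $\xi + R_1 + R_2$ and the class of $\Sigma_2$ is $\xi - r_1 - r_2$ (the relation factors as the product of the two section classes, each being $\xi$ minus the first Chern class of the corresponding quotient line bundle, up to sign conventions).

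Next I would match these two abstract sections with the two geometric ones. The inclusion $\iota_{\{1,2,3\},\{2,3\}}$ comes from the fact that $Y_{123} = S\times S$ sits inside $Y_{23} = N$ as the exceptional-divisor-of-$Y_2$ picture restricted to the triple locus: concretely, $S\times S \subset L_C\times S \subset L_C\times Q$, and blowing up $S\times S$ in $L_C\times Q$, the directions normal to $S\times S$ inside $L_C\times S$ (equivalently inside the first factor $L_C$, i.e.\ the $\OO(-1,-1,0,0)$ summand coming from $\NN_{S/L_C}$ restricted appropriately) cut out one of the two sections. Tracking which summand of $\NN_{(S\times S)/(L_C\times Q)}$ corresponds to the $S\times S \subset L_C\times S$ normal direction and which to $S\times S \subset S\times Q$ determines that $[\iota_{\{1,2,3\},\{2,3\}}(S\times S)] = \xi + R_1+R_2$ and $[\iota_{\{2,3,4\},\{2,3\}}(S\times S)] = \xi - r_1-r_2$. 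The final sentence of the statement is then immediate from Lemma \ref{lPushPullSection}: once the section class $\Sigma$ is known, $\sigma_*(x) = \pi^*(x)\cdot\Sigma$ and $\sigma^*(y) = \pi_*(y\cdot\Sigma)$ compute all pushforwards and pullbacks, with $\pi_*$ of a class $\zeta c + d$ being $c$ since $N\to S\times S$ is a $\P^1$-bundle.

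The main obstacle I expect is bookkeeping the sign/twist conventions: matching the tautological sub- versus quotient-bundle description of the two sections with the geometric incidences $S\times S\subset L_C\times S\subset L_C\times Q$ and $S\times S\subset S\times Q\subset L_C\times Q$, and making sure the resulting section classes are expressed in the $\xi$ of Lemma \ref{lNumN} (rather than the $\eta$ of Lemma \ref{lCompareMN}, which differs by $r_1+r_2-R_1-R_2$). A clean way to do this is to note the two sections are disjoint, so their classes multiply to zero in $\Chownum^*(N)$, which is exactly the defining relation of $\Chownum^*(N)$; hence the two section classes must be $\xi + R_1+R_2$ and $\xi - r_1-r_2$ in some order, and a single transversality/normal-bundle computation (the normal bundle of $S\times S$ in $L_C\times S$ is $\NN_{S/L_C}$ pulled back, which has Chern class $3(R_1+R_2)$ restricted from $E$, but as a line subbundle of $\NN_{(S\times S)/(L_C\times Q)}$ it is the $\OO(-1,-1,0,0)$ factor) pins down the correspondence.
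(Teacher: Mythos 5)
Your proposal is correct and follows essentially the same route as the paper: the paper likewise identifies $\iota_{\{1,2,3\},\{2,3\}}(S\times S)$ and $\iota_{\{2,3,4\},\{2,3\}}(S\times S)$ as the two natural sections $(1:0)$ and $(0:1)$ of $\P\bigl(\OO(-1,-1,0,0)\oplus\OO(0,0,1,1)\bigr)$, corresponding to normal directions lying in $Y_{12}=L_C\times S$ resp.\ $Y_{24}=S\times Q$, and then reads off the section classes (your disjointness-plus-factorization argument is a fine way to make that last step explicit; just note the section attached to a sub-line-bundle has class $\xi$ \emph{plus} the first Chern class of the quotient line bundle, which indeed gives $\xi+R_1+R_2$ and $\xi-r_1-r_2$).
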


\begin{proof}
The subvarieties $\iota_{\{1,2,3\},\{2,3\}} (S\times S)$ resp. $\iota_{\{2,3,4\},\{2,3\}} (S\times S)$  of $Y_{23}=N$ consist of all those normal directions to $S\times S$ in $L_C \times Q$ that are contained in $Y_{12}=L_C\times S$ resp. $Y_{24}=S \times Q$. Recall that $N$ is the projectivisation of 
\[
\NN_{(S\times S)/ (L_C\times Q)} \simeq \OO_{(\P^1\times\P^1)\times (\P^1\times\P^1)} (-1,-1, 0, 0) \oplus \OO_{(\P^1\times\P^1)\times (\P^1\times\P^1)} (0,0,1,1)
\]
and the (projectivisation of the) first summand here corresponds to normal directions contained in $Y_{12}=L_C\times S$ and the second to those contained in $Y_{24}=S \times Q$. 
The assertion follows from an application of Lemma \ref{lProjBundleClasses} because these are just the two natural sections $(1:0)$ and $(0:1)$ in this projective bundle.  \end{proof}

\subsection{Computing pushforwards and pullbacks via $\iota_{\{i,j\}, \{a\}} \colon Y_{ij}\to Y_{a}$}

The inclusions $\iota_{\{2,3\}, \{2\}} \colon Y_{23}\to Y_{2}$ and $\iota_{\{2,3\}, \{3\}} \colon Y_{23}\to Y_{3}$ are just the inclusions of the exceptional divisor into the blowup.

\begin{lemma}\label{lPushBlowUp}
Consider smooth varieties $Z \subset Y \subset X$ with $Z$ a divisor in $Y$ and $Y$ a divisor in $X$. Blowing up $Z$ in 
$X$ gives the following diagram:
\[
\xymatrix{
E \ar[d]^{\pi_E}  \ar[r]^{j_1} & \overline{Y} \cup E \ar[r]^{j_2}\ar[d]^{\pi_{\overline{Y} \cup E}} & \mathrm{Bl}_Z (X) \ar[d]^{\pi}\\
Z \ar[r]^{i_1} & Y \ar[r]^{i_2} & X
}
\]
We denote by $\pi_{\overline{Y}}  := \pi_{\overline{Y} \cup E}|_{\overline{Y}}$  the restriction of $\pi$ to the strict transform $\overline{Y}$ of $Y$. It is an isomorphism since $Z$ is a divisor in $Y$ and $Y$ is smooth. Let $j_{Y} : = j_2\mid_{\overline{Y}}\circ \pi_{\overline{Y}}^{-1}$.

Also we set $j := j_2 \circ j_1$.

Consider the map
\[
	(j_Y)_* \colon \Chow^*(Y) \to \Chow^*\bigl(\mathrm{Bl}_Z(X)\bigr). 
\]
According to the above definitions we have  $(j_Y)_* := (j_2\mid_{\overline{Y}})_* \circ \pi_{\overline{Y}}^*$. Then for $y \in \Chow^*(Y)$ we have
\[
	(j_Y)_*(y) = \pi^* (i_2)_* (y) - j_*\pi_E^*(i_1)^*(y).
\]
\end{lemma}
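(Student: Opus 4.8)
I want to compute $j_Y(y) = (j_2)_* \pi_Y^*(y)$ by relating the strict transform of $Y$ to the total transform $\pi^* Y$. The key geometric fact is the standard formula for the pullback of a divisor under a blowup: since $Y$ is a smooth divisor in $X$ containing the center $Z$ of multiplicity one (because $Z$ is a divisor in the smooth variety $Y$, hence has multiplicity $1$ along $Z$ as a subscheme of $X$), we have the linear equivalence
\[
\pi^*\bigl((i_2)_*(1_Y)\bigr) = (j_2)_*(1_{\wt Y}) + E
\]
in $\Chow^1(\mathrm{Bl}_Z X)$, where $\wt Y$ denotes the strict transform. Here I am writing $(i_2)_*(1_Y) = [Y]$ and $(j_2)_*(1_{\wt Y}) = [\wt Y]$ for the divisor classes. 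This is the blowup formula $\pi^* Y = \wt Y + m E$ with $m = \mathrm{mult}_Z Y = 1$.

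First I would make precise the identification $\pi_Y \colon \wt Y \xrightarrow{\sim} Y$ (the restriction of $\pi$ to the strict transform): this is an isomorphism because blowing up a Cartier divisor $Z$ inside the smooth variety $Y$ does nothing, so $\wt Y = \mathrm{Bl}_Z Y = Y$. Consequently $(j_2)_*\pi_Y^*(y)$ is, by the projection formula, equal to $(j_2)_*\bigl( (\pi_Y^* y) \cdot 1_{\wt Y}\bigr) = [\wt Y] \cdot (j_2)_* \pi_Y^* (\text{something})$ — more cleanly: for any class $y$ on $Y$, pushing forward along $j_2$ and multiplying, I would use that $(j_2)_*\pi_Y^* y$ can be computed as $[\wt Y]$ capped against a lift of $y$. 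The cleanest route is: extend $y$ arbitrarily to a class $\tilde y \in \Chow^*(\mathrm{Bl}_Z X)$ restricting to $\pi_Y^* y$ on $\wt Y$; then $(j_2)_* \pi_Y^* y = [\wt Y] \cdot \tilde y$. Taking $\tilde y = \pi^* \hat y$ where $\hat y \in \Chow^*(X)$ restricts to $y$ on $Y$ (possible since $i_2^*$ need not be surjective, but one can instead argue componentwise / use that it suffices to treat $y$ of the form $i_2^* \hat y$ and then $(j_1)_*\pi_E^*$-type correction terms — here I would simply take $\hat y$ with $i_2^* \hat y = y$, which exists when $y$ is in the image, and note the general case follows by linearity since $\Chow^*(Y)$ is generated by such in our applications, or argue directly).

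Then substitute $[\wt Y] = \pi^*[Y] - E$:
\[
(j_2)_*\pi_Y^* y = ([\wt Y]) \cdot \pi^*\hat y = \pi^*[Y]\cdot \pi^*\hat y - E \cdot \pi^* \hat y = \pi^*\bigl([Y]\cdot \hat y\bigr) - E\cdot \pi^*\hat y.
\]
Now $\pi^*([Y]\cdot \hat y) = \pi^*\bigl((i_2)_*(i_2^* \hat y)\bigr) = \pi^*(i_2)_*(y)$ by the projection formula on $X$. For the second term, $E \cdot \pi^*\hat y = (j_1)_*\bigl(\pi_E^* i^*(\pi^*\hat y|_?)\bigr)$ — more precisely $E\cdot \pi^*\hat y = j_*\bigl(\pi_E^* (i_1)^* i_2^* \hat y\bigr) = j_*\pi_E^*(i_1)^*(y)$, using the standard intersection rule $E \cdot \pi^*\alpha = j_*\pi_E^*(\iota_Z^*\alpha)$ from Proposition \ref{pChowRingBlowUp} (the rule $\pi^*\alpha \cdot j_*\gamma = j_*(\pi_E^* i^*\alpha \cdot \gamma)$ with $\gamma = 1$), together with $\iota_Z^* = (i_1)^* i_2^*$ since $Z \hookrightarrow Y \hookrightarrow X$. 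This yields exactly
\[
j_Y(y) = \pi^*(i_2)_*(y) - j_*\pi_E^*(i_1)^*(y),
\]
as claimed.

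**Main obstacle.** The only genuinely delicate point is the reduction allowing me to choose a global lift $\hat y \in \Chow^*(X)$ with $i_2^*\hat y = y$: in general $i_2^*$ is not surjective, so I must either (i) observe that it is enough to prove the identity for a generating set of $\Chow^*(Y)$, and in the intended application such generators do lift, or (ii) avoid the lift entirely by working with $\tilde y \in \Chow^*(\mathrm{Bl}_Z X)$ restricting to $\pi_Y^* y$ on $\wt Y$ (which always exists, e.g. pull back via $\pi_Y^{-1}$ composed with an extension) and carefully tracking that $E \cdot \tilde y = j_*\pi_E^* (i_1)^* y$ still holds — this last identity reduces to the excess/self-intersection computation $E|_E = -\zeta$ and the compatibility of restrictions, i.e. $\tilde y|_E = \pi_E^*(\tilde y|_{\wt Y}|_Z) = \pi_E^*(i_1)^* y$ because $E \cap \wt Y = \mathbb{P}(\NN_{Z/Y})$ maps via $\pi_E$ to $Z$ and $\tilde y|_{\wt Y} = \pi_Y^* y$. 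I expect approach (ii) to be cleanest and would present that; everything else is the bookkeeping of projection formulas and the blowup relations already recorded in Proposition \ref{pChowRingBlowUp}.
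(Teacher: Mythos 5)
Your identity is fine for classes of the form $y=i_2^*\hat y$, and for those your computation (projection formula plus $\pi^*[Y]=[\wt Y]+E$ and $E\cdot\pi^*\hat y=j_*\pi_E^*(i_1)^*i_2^*\hat y$) is correct. The gap is in the reduction to that case, and neither of your two escape routes closes it. Route (i) only proves the lemma for $y$ in the image of $i_2^*$, which is genuinely smaller than $\Chow^*(Y)$ even in the situations the paper needs: e.g.\ for $Y_{12}=L_C\times S\subset L_C\times Q$ the restriction $\Chow^1(Q)\to\Chow^1(S)$ hits only $R_1+R_2$, so classes involving $R_1$ alone are not pulled back from the ambient variety, and the lemma is used precisely to push forward \emph{all} generators of $\Chownum^*(Y_{ij})$. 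Route (ii) is based on a false existence claim: there is in general no $\tilde y\in\Chow^*(\mathrm{Bl}_ZX)$ with $j_2^*\tilde y=\pi_Y^*y$, because restriction to a divisor is not surjective on Chow groups (take $X=\P^3$, $Y$ a smooth surface of high Picard rank, $Z$ a curve on it: the image of $\Chow^1(\mathrm{Bl}_ZX)$ in $\Pic(Y)$ is generated by $H|_Y$ and $[Z]$ only); there is no ``extension'' operation inverting $j_2^*$, and $(j_2)_*$ followed by restriction is not the identity. Moreover, even when a lift exists, your final step $E\cdot\tilde y=j_*\pi_E^*(i_1)^*y$ does not follow: $\tilde y|_E$ is not determined by $\tilde y|_{\wt Y}$, since $E\cap\wt Y=\P(\NN_{Z/Y})$ is merely a section of the $\P^1$-bundle $E\to Z$ and a class on $E$ (which may involve $\zeta$) is not recovered from its restriction to a section; indeed two lifts $\tilde y,\tilde y'$ give the same $[\wt Y]\cdot\tilde y$ but in general different $E\cdot\tilde y$, so the term-by-term identification is only valid for the special lift $\pi^*\hat y$.

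For contrast, the paper proves the statement for arbitrary $y$ by a cycle-level argument: represent $y$ by a cycle $\Gamma\subset Y$ which, after the Moving Lemma, meets $Z$ generically transversely; then the scheme-theoretic preimage satisfies $\pi^{-1}(\Gamma)=\pi_Y^{-1}(\Gamma)\cup\pi_E^{-1}(\Gamma\cap Z)$, the total preimage represents $\pi^*(i_2)_*(y)$ by the generic-transversality criterion of \cite[Thm.~1.23(a)]{E-H16}, and the residual piece $\pi_E^{-1}(\Gamma\cap Z)$ represents $j_*\pi_E^*(i_1)^*(y)$. If you want to stay with an algebraic argument instead, you would need something like Fulton's blow-up formula with its Segre-class correction term, not a lift of $y$ to the ambient space; as written, your proof establishes the formula only on the subring $i_2^*\Chow^*(X)$.
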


\begin{proof}
By the Moving Lemma \cite[Appendix A, Lemma A.1 (a)]{E-H16}, a class $y \in \Chow^*(Y)$ can be represented by a cycle $Z_y$ that intersects the codimension one subvariety $Z$ of $Y$ generically transversely.  By additivity it suffices to prove the assertion of the Lemma for one irreducible reduced component of the support of $Z_y$. We call this $\Gamma$. Then the scheme-theoretic pullback $\pi^{-1}(\Gamma )$ decomposes as 
\[
\pi^{-1}(\Gamma ) = \Gamma_{\overline{Y}} \cup \Gamma_{E}
\]
where $\Gamma_{\overline{Y}}$ is the preimage of $\Gamma$ on $\overline{Y}$ under the isomorphism $\overline{Y}\to Y$, and $\Gamma_{E}$ are the remaining components, all of which have support on $E$. Now
\[
[ \pi^{-1}(\Gamma ) ] = \pi^* (i_2)_* ([\Gamma])
\]
by \cite[Thm. 1.23, (a)]{E-H16} (the hypotheses of this Theorem are satisfied since $\Gamma$ is generically transverse to $Z$ because we applied the Moving Lemma). 
We have
\[
[\Gamma_{\overline{Y}}] = (j_Y)_*([\Gamma])
\]
and
\[
[\Gamma_{E}] = j_*\pi_E^*(i_1)^*([\Gamma])
\]
because by our choice $\Gamma \cap Z$ has an underlying cycle representing $[\Gamma] . Z = (i_1)^*([\Gamma])$. 
\end{proof}

With this Lemma we can compute pushforwards and pullbacks for:
\begin{enumerate}
\item
$\iota_{\{1,2\}, \{2\}} \colon Y_{12}\to Y_{2}$.
\item
$\iota_{\{1,3\}, \{3\}} \colon Y_{13}\to Y_{3}$.
\item
$\iota_{\{2,4\}, \{2\}} \colon Y_{24}\to Y_{2}$.
\item
$\iota_{\{3,4\}, \{3\}} \colon Y_{34}\to Y_{3}$.
\end{enumerate}

\medskip

The inclusions $\iota_{\{2,4\}, \{4\}} \colon Y_{24}\to Y_{4}$ and $\iota_{\{3,4\}, \{4\}} \colon Y_{34}\to Y_{4}$ can be handled using the K\"unneth formula. 

\medskip

Lastly, we have to consider $\iota_{\{1,2\}, \{1\}} \colon Y_{12}\to Y_{1}$ and $\iota_{\{1,3\}, \{1\}} \colon Y_{13}\to Y_{1}$. Here everything follows using the K\"unneth formula except the pullback of $D$:

\begin{lemma}\label{lPullbackD}
We have 
\begin{align*} 
\iota_{\{1,2\}, \{1\}} ^* (D) &= e R_1R_2+3f(R_1+R_2),  \\
\iota_{\{1,3\}, \{1\}} ^* (D) &= r_1r_2 E+3 (r_1+r_2) F .
\end{align*}
\end{lemma}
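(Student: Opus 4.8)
The plan is to reduce to the first formula by a symmetry, then identify $D\cap(L_C\times S)$ as an explicit subvariety and compute its class. The involution of $Y_1=L_C\times L_C$ swapping the two factors fixes $D$ (which is a bundle over the \emph{diagonal}) and interchanges $\iota_{\{1,2\},\{1\}}$ with $\iota_{\{1,3\},\{1\}}$ after the relabelling $R_i\leftrightarrow r_i$, $E\leftrightarrow e$, $F\leftrightarrow f$; so it suffices to treat $\iota:=\iota_{\{1,2\},\{1\}}=\id_{L_C}\times\iota_{S,L_C}$, with $\iota_{S,L_C}$ realising $S$ as the strict transform $\widetilde S\subset L_C$ of $S$, a smooth divisor of class $2H-E$.

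\emph{Step 1 (the intersection).} By Proposition \ref{pGensLCxLC}, $D$ is the class of $E_C\times_C E_C$, where $E_C=\P(\NN_{C/\P^3})\xrightarrow{\pi_{E_C}}C$ is the exceptional divisor of $L_C\to\P^3$, embedded in $L_C\times L_C$ by the two inclusions $E_C\hookrightarrow L_C$ over the diagonal of $C$. Since $C$ is a smooth divisor in the smooth surface $S$, the strict transform $\widetilde S$ meets $E_C$ along the section $\Sigma:=\P(\NN_{C/S})\cong C$ of $\pi_{E_C}$, and under the isomorphism $\widetilde S\cong S$ this section becomes the curve $\iota_{C,S}\colon C\hookrightarrow S$, which has bidegree $(3,3)$, so $[C]=3R_1+3R_2$ in $\Chownum^1(S)$. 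Therefore $D\cap(L_C\times\widetilde S)$ is, set-theoretically, the surface
\[
W:=\{\,(x,\iota_{C,S}(\pi_{E_C}(x)))\ :\ x\in E_C\,\}\cong E_C ,
\]
of the expected dimension $2$; since $D$, $L_C\times\widetilde S$ and $W$ are smooth, the intersection is transverse and $\iota^*(D)=[W]$.

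\emph{Step 2 (the class of $W$).} The surface $W$ is the image of the closed immersion $\gamma=(\iota_{E_C,L_C}\times\iota_{C,S})\circ(\id_{E_C},\pi_{E_C})\colon E_C\to L_C\times S$, hence $[W]=(\iota_{E_C,L_C}\times\iota_{C,S})_*[\Gamma_{\pi_{E_C}}]$, where $\Gamma_{\pi_{E_C}}\subset E_C\times C$ is the graph of $\pi_{E_C}$. As the class of a graph, $[\Gamma_{\pi_{E_C}}]=(\pi_{E_C}\times\id_C)^*[\Delta_C]$. Writing $[\Delta_C]=p+P+\delta$ in $\Chownum^1(C\times C)$ with $p,P$ as in Lemma \ref{pCxCdsg} and $\delta:=[\Delta_C]-p-P$, one has $(\pi_{E_C}\times\id_C)^*p=\pi_{E_C}^*[\mathrm{pt}_C]\boxtimes 1$, pushing forward to $f\boxtimes[C]=3f(R_1+R_2)$ (a fibre of $E_C$ has class $f$ in $L_C$), and $(\pi_{E_C}\times\id_C)^*P=1\boxtimes[\mathrm{pt}_C]$, pushing forward to $e\boxtimes R_1R_2$ (with $[E_C]=e$ and $(\iota_{C,S})_*[\mathrm{pt}_C]=R_1R_2$). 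The remaining term $T:=(\iota_{E_C,L_C}\times\iota_{C,S})_*(\pi_{E_C}\times\id_C)^*\delta$ vanishes: since $S$ is linear, $\Chownum^2(L_C\times S)$ is spanned by exterior products $\alpha\boxtimes\beta$ (Proposition \ref{pTotaroLinearSchemes}), and by the projection formula $\deg\!\big(T\cdot(\alpha\boxtimes\beta)\big)=\deg_{C\times C}(\delta\cdot u)$ with $u=(\pi_{E_C})_*(\iota_{E_C,L_C}^*\alpha)\boxtimes\iota_{C,S}^*\beta$. The class $u$ is an exterior product of classes on $C$, hence — in codimension $1$ — a $\Z$-combination of $p$ and $P$, and $\delta\cdot p=\delta\cdot P=0$ by the intersection numbers recorded in $f_{C\times C}$ in Lemma \ref{pCxCdsg}. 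By non-degeneracy of the numerical intersection pairing, $T=0$, so $[W]=eR_1R_2+3f(R_1+R_2)$. This is the first formula; the second follows by the symmetry above.

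\emph{Main obstacle.} The crux is Step 1: pinning down $D\cap(L_C\times S)$, and in particular the fact that the strict transform of $S$ cuts the exceptional divisor $E_C$ along a \emph{section} of $E_C\to C$, together with the transversality that justifies $\iota^*(D)=[W]$. The vanishing of $T$ in Step 2 is the only other delicate point, and it is handled by the standard numerical-pairing argument.
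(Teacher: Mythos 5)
Your argument is correct, but it proves the lemma by a genuinely different route than the paper. The paper argues indirectly: it restricts $D$ further to the stratum $S\times S$ via $\iota_{\{1,2,3\},\{1,2\}}$, computes $(i\circ j)^*(D)=3(r_1+r_2)R_1R_2+3r_1r_2(R_1+R_2)$, makes an ansatz for $i^*(D)$ with four unknown integer coefficients, and pins them down by the projection formula $D\cdot i_*[Y_{12}]=i_*(i^*(D))$ evaluated in $\Chownum^*(L_C\times L_C)$ using the dual socle generator of Proposition \ref{pLCxLCdsg}; the resulting linear system is solved in the Macaulay2 scripts \cite{BBG-M2}. You instead compute the pullback geometrically: you use that $D$ is the class of $E_C\times_{C}E_C$, identify the intersection with $L_C\times S$ as the graph-type surface $W\cong E_C$, and evaluate $[W]$ by pushing forward $(\pi_{E_C}\times\id)^*[\Delta_C]$, killing the primitive part $\delta$ of the diagonal by pairing against the K\"unneth generators of $\Chownum_*(L_C\times S)$ (Proposition \ref{pTotaroLinearSchemes}) and using $\delta\cdot p=\delta\cdot P=0$ from Lemma \ref{pCxCdsg}. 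What your approach buys is a computer-free and coordinate-free proof that also explains geometrically where the coefficients $1$ and $3$ come from; what the paper's approach buys is brevity and uniformity with the rest of its machinery (everything reduces to linear algebra in the already-computed numerical rings).

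One step needs tightening, and it is exactly the one you flag as the crux: the inference ``$D$, $L_C\times\widetilde{S}$ and $W$ are smooth, hence the intersection is transverse'' is not valid as stated (two smooth subvarieties can meet tangentially along a smooth locus of the expected dimension, in which case the intersection multiplicity exceeds $1$, and you would only get $\iota^*(D)=m[W]$). Here transversality does hold, and the honest check is short: the strict transform $\widetilde{S}$ meets $E_C$ transversally in $L_C$ along the section $\Sigma=\P(\NN_{C/S})$ (a local computation at the blow-up of $C\subset\P^3$), so the scheme-theoretic intersection is $E_C\times_C\Sigma$, reduced and smooth of dimension $2$; equivalently, at any point $(x,\sigma(c))$ the tangent spaces of $D$ and $L_C\times\widetilde{S}$ meet in dimension $2=3+5-6$, since $T(E_C)\cap T(\widetilde{S})=T(\Sigma)$ maps isomorphically to $T_cC$. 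With that inserted, your Step 1 gives $\iota^*(D)=[W]$ with multiplicity one, and Step 2 (including the vanishing of $T$ by the numerical pairing) is complete and correct; the symmetry reduction to the first formula matches the paper's.
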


\begin{proof}
We only need to prove one of these formulas because the other follows by symmetry. We have the two inclusion
\[
j:=\iota_{\{1,2,3\}, \{1,2\}} \colon S\times S \to L_C\times S, \quad i:=\iota_{\{1,2\}, \{1\} }\colon L_C\times S \to L_C\times L_C .
\]
Now the pullback $(i\circ j)^* (D)$ is the diagonal $\Delta_C \subset C\times C\subset S\times S$. Hence, intersecting with a basis of the divisors in $S\times S$, we find
\[
(i\circ j)^* (D) = 3(r_1 +r_2)R_1R_2 + 3 r_1r_2 (R_1+R_2) . 
\]
Therefore, $i^* (D)$ must be of the form
\[
\alpha h R_1R_2 + \beta e R_1 R_2 + \gamma h^2 (R_1+R_2) + \delta f (R_1 +R_2)
\]
with $\alpha, \beta, \gamma,\delta$ integers. Now we have the equation
\[
D \cdot i_* (L_C\times S) = i_* (i^* (D)).
\]
Using Proposition \ref{pLCxLCdsg} and the fact that we can compute $i_*$ using the K\"unneth formula, we calculate the images under $i_*$ of $hR_1R_2$, $e R_1 R_2$, $h^2 (R_1+R_2)$ and $f (R_1 +R_2)$ and find that the images are linearly independent. We obtain linear equations for the unknowns $\alpha, \beta, \gamma , \delta$. The only solution is the one in the statement. This is checked in \cite[\textsf{pullbackOfD.m2}]{BBG-M2}.
\end{proof}

\begin{theorem}\label{tCubicThreefoldDegeneration}
We have
\[
	\Num_{\mathrm{prelog}}^3(Y) = \Z^6
\]
modulo torsion.
\end{theorem}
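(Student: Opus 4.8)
Since the claim concerns only the rank, it suffices to compute $\dim_{\Q}\Num_{\mathrm{prelog}}^3(Y)_{\Q}$: a finitely generated abelian group is $\Z^{r}$ modulo torsion exactly when its $\Q$-rank is $r$. From the diagram of Definition \ref{dBothmer} one reads off that $\Chowprelognum(Y)$ is the image of $R(Y)$ in $\coker(\delta)$; concretely, in degree $3$,
\[
\Num_{\mathrm{prelog}}^3(Y)=R(Y)^{3}\big/\bigl(R(Y)^{3}\cap\im(\delta)^{3}\bigr),
\]
and after tensoring with $\Q$ (which changes nothing but torsion) the right-hand side becomes a linear-algebra problem: $R(Y)^{3}_{\Q}\subseteq\bigoplus_{i=1}^{4}\Chownum^3(Y_i)_{\Q}$ is the kernel of the difference-of-restrictions map $(\alpha_i)\mapsto\bigl(\iota^{*}_{\{i,j\},\{i\}}\alpha_i-\iota^{*}_{\{i,j\},\{j\}}\alpha_j\bigr)_{i<j}$ into $\bigoplus_{i<j}\Chownum^3(Y_{ij})_{\Q}$, while $\im(\delta)^{3}_{\Q}$ is the $\Q$-span of the signed push-forwards $\iota_{\{i,j\},\{i\}*}\colon\Chownum^2(Y_{ij})_{\Q}\to\Chownum^3(Y_i)_{\Q}$. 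Since $Y_1\cap Y_4=\emptyset$, only the five double intersections $Y_{12},Y_{13},Y_{23},Y_{24},Y_{34}$ contribute, and the triple intersections do not enter this formula at all. All of its input has been prepared in the preceding sections.

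The plan is then the following. \emph{(i)} Fix $\Q$-bases of $\Chownum^3(Y_i)_{\Q}$ from the dual-socle-generator presentations: Lemma \ref{lChowNumDualSocle} together with Proposition \ref{pLCxLCdsg} for $Y_1=L_C\times L_C$; Propositions \ref{pGeneratorsY2} and \ref{pGeneratorsY3} for the blow-ups $Y_2,Y_3$, where one also carries along the blow-up relations of Proposition \ref{pChowRingBlowUp}; and the K\"unneth formula (Proposition \ref{pTotaroLinearSchemes}) with Lemmas \ref{lNumLC}--\ref{lNumS} for $Y_4=Q\times Q$. \emph{(ii)} Fix $\Q$-bases of $\Chownum^2(Y_{ij})_{\Q}$ and $\Chownum^3(Y_{ij})_{\Q}$, all via K\"unneth except $Y_{23}=N$, which is Lemma \ref{lNumN}. \emph{(iii)} Write out, in these bases, the matrices of every pull-back $\iota^{*}_{\{i,j\},\{k\}}$ in codimension $3$ and every push-forward $\iota_{\{i,j\},\{k\}*}$ out of codimension $2$; these are exactly the maps produced in the preceding subsections --- Lemma \ref{lRestrictionsAmongChow} for the $L_C,Q,S$ building blocks, Lemma \ref{lPullbackD} for the pull-back of $D$, Lemma \ref{lPushBlowUp} for the inclusions $\iota_{\{1,2\},\{2\}}$, $\iota_{\{1,3\},\{3\}}$, $\iota_{\{2,4\},\{2\}}$, $\iota_{\{3,4\},\{3\}}$, Proposition \ref{pPushPull23} with Lemma \ref{lPushPullSection} for $Y_{23}=N$, Lemma \ref{lCompareMN} for the identification $M\simeq N$, and the K\"unneth formula for all the rest. \emph{(iv)} Assemble the two maps of the displayed formula, compute $R(Y)^{3}_{\Q}$ as a kernel and $\im(\delta)^{3}_{\Q}$ as a column span, and evaluate $\dim_{\Q}$ of the quotient by row reduction. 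This final step is carried out in \cite{BBG-M2} and gives $6$; running the same computation over $\Z$ via Smith normal form additionally produces an explicit set of six generators.

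I expect the only genuine difficulty to be organisational. One must push a sizeable list of generators through the blow-up relations of Proposition \ref{pChowRingBlowUp} on $Y_2$ and $Y_3$ --- the classes $N_x$ and $M_x$ with their multiplication rules --- and keep every sign in $\delta$ and in the various push-pull formulas consistent; this is where errors would most easily creep in. Once the matrices are entered correctly, the remaining linear algebra, and with it the identification $\Num_{\mathrm{prelog}}^3(Y)\cong\Z^6$ modulo torsion, is purely mechanical.
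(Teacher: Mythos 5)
Your proposal is correct and follows essentially the same route as the paper: identify $\Num_{\mathrm{prelog}}^3(Y)$ with the image of $R(Y)^3$ in $\coker(\delta)$, i.e.\ the kernel of the restriction (prelog) map modulo the span of the signed push-forwards from $\bigoplus\Num^2(Y_{ij})$, feed in the push-forward and pull-back matrices assembled in Sections \ref{sCubicThreefolds}--\ref{sProductFamily}, and let the computation in \cite{BBG-M2} do the final linear algebra. The only difference is that you pass to $\Q$-coefficients, which indeed suffices for the statement modulo torsion, whereas the paper keeps $\Z$-coefficients (noting the rank drop of $\delta$ and $\rho$ in characteristic $2$ and reducing to an explicit $17\times 17$ integral matrix) because that integral information is needed afterwards for Theorems \ref{tGeneratorsY} and \ref{tSaturatedPrelogY}.
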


\begin{proof}
For this computation we use Definition \ref{dBothmer}: 
 \xycenter{
  & 0 \ar[d] & 0 \ar[d] & \\
  & R(Y) \ar[d] \ar[r] & \Num^3_{\mathrm{prelog}}(Y) \ar[d] \ar[r] & 0 \\
\bigoplus \Num^2(Y_{ij}) \ar[d]^-{\rho'} \ar[r]^\delta
&\bigoplus \Num^3(Y_{i})  \ar[r] \ar[d]^-\rho
&\coker \delta \ar[r]
& 0  \\
\bigoplus \Num^2(Y_{ijk}) \ar[r]^-{\delta'}
&\bigoplus \Num^3(Y_{ij})
 }
with explicitly given maps $\rho, \rho'$ in terms of the push forwards $\iota_*$ and $\delta,\delta'$ in terms of the pullbacks $\iota^*$ calculated above. 

For the convenience of the reader we give a slow walk through the necessary computations.
A Macaulay2 script doing the same work is available at \cite[\textsf{prelogY.m2} using \textsf{prelogYparts.m2}]{BBG-M2}.

Using Propositions \ref{pLCxLCdsg}, \ref{pGeneratorsY2}, \ref{pGeneratorsY3} and Subsections \ref{ssY4}, \ref{ssYijk} we can calculate the intersection rings of $Y_{i}, Y_{ij}$ and $Y_{ijk}$ in degree $3$:

\xycenter{
  & 0 \ar[d] & 0 \ar[d] & \\
  & R(Y) \ar[d] \ar[r] & \Num^3_{\mathrm{prelog}}(Y) \ar[d] \ar[r] & 0 \\
\bigoplus \Z^{32} \ar[d]^-{\rho'} \ar[r]^\delta
&\bigoplus \Z^{39}  \ar[r] \ar[d]^-\rho
&\coker \delta \ar[r]
& 0 \\
\bigoplus \Z^{12} \ar[r]^-{\delta'}
&\bigoplus \Z^{32}
 }

A good check that we got everything right is that indeed $\delta\rho = \rho' \delta'$ (this is
the Friedman condition).

Now one can check that $\delta$ has rank $22$ in every characteristic except $2$ where
it has rank $21$.  The same is true for $\rho$.

This reduces the diagram to:
\xycenter{
  & 0 \ar[d] & 0 \ar[d] & \\
  & \Z^{17} \ar[d]^{\sigma} \ar[r] & \Num^3_{\mathrm{prelog}}(Y) \ar[d] \ar[r] & 0 \\
\bigoplus \Z^{32} \ar[d]^-{\rho'} \ar[r]^\delta
&\bigoplus \Z^{39}  \ar[r]^-{\gamma} \ar[d]^-\rho
&\Z^{17}  \oplus \Z/2 \ar[r] 
& 0 \\
\bigoplus \Z^{12} \ar[r]^-{\delta'}
&\bigoplus \Z^{32}
 }
The degree $3$ part of the numerical prelog ring $\Num_{\mathrm{prelog}}^3(Y)$, modulo torsion, is therefore the image of an
explicitly given $17 \times 17$ matrix $M$. We calculate that this matrix $M$ has rank $6$. Therefore
\[
	\Num_{\mathrm{prelog}}^3(Y) = \Z^6
\]
modulo torsion.
\end{proof}

We now identify explicit effective generators of $\Num_{\mathrm{prelog}}^3(Y)$ modulo torsion.

\begin{theorem}\label{tGeneratorsY}
The following vectors in $\bigoplus_{i=1}^4\Chownum^3 (Y_i)$ satisfy the prelog condition and are mapped to a $\Z$-basis of $\Num_{\mathrm{prelog}}^3(Y)$, modulo torsion:
\begin{align*}
Z_{03} = (&h^3,h^3,0,0) \\
Z_{30} = (&H^3,0,H^3,0) \\
Z_{12} = \bigl(&(h^2-2f)H,(h^2-2f)S,0,0\bigr) \\
Z_{21} = \bigl(&h(H^2-2F),0,s(H^2-2F),0\bigr) \\
Z_{\Delta} = \bigl(
	&h^3 + h^2H + hH^2 + H^3 - D,\\
	&N_{r_1r_2}+N_{R_1r_2}+N_{r_1R_2}+N_{R_1R_2}, \\
	&M_{r_1r_2}+M_{R_1r_2}+M_{r_1R_2}+M_{R_1R_2},\\
	&sl+sL+Sl+SL
	\bigr) \\
Z_D = (& D-eF-fE, 0,0,0) \\
\end{align*}
 \end{theorem}	

\begin{proof}
In \cite[\textsf{prelogY.m2} using \textsf{prelogYparts.m2}]{BBG-M2} we check that the elements in the statement satisfy the prelog condition and are a $\Z$-basis: we do this 
by showing that they are linearly independent and writing the images under $\gamma\circ\sigma$ of all standard generators of $R(Y)\simeq \Z^{17}$ as $\Z$-linear combinations of the elements above (modulo torsion).
\end{proof}

\begin{proposition}\label{pIntFormOnNumPrelog}
The intersection form in $\Num_{\mathrm{prelog}}^3(Y)$ with respect to this basis is
\[
\begin{pmatrix}
 0 &1& 0 &0& 1 & 0  \\
 1& 0 &0& 0& 1&  0  \\
 0 &0 &0 &1 &1 & 0  \\
 0 &0 &1 &0 &1  &0  \\
 1 &1 &1 &1 &-6 &-8 \\
 0 &0 &0 &0 &-8 &-8 \\
\end{pmatrix}.
\]
\end{proposition}

\begin{proof}
By the definition of the ring structure on $\Num_{\mathrm{prelog}}^*(Y)$ given in \cite{BBvG19-1}, the intersection of two prelog cycles can be computed componentwise. We have $\Num_{\mathrm{prelog}}^6(Y) \simeq \Z$ since $Y$ is connected.  
The intersection matrix is then calculated in \cite[\textsf{prelogY.m2} using \textsf{prelogYparts.m2}]{BBG-M2}.
\end{proof}

\begin{remark}\label{rVoisin}
Notice that for a very general cubic threefold $V$ it is known that the Chow group of $V\times V$ modulo numerical equivalence is generated by (point)$ \times V$, $V\times$(point), (line)$\times$(hyperplane section), (hyperplane section)$\times$(line), and the diagonal. We would like to thank Claire Voisin for pointing this out.  Our specialisation homomorphism to the prelog Chow group is therefore injective, but not surjective, in this case; this is made more precise in the next proposition, Proposition \ref{pSpecialisationCyclesOnVxV}. 
\end{remark}

\begin{proposition}\label{pSpecialisationCyclesOnVxV}
The images of $Z_{03}$, $Z_{30}$, $Z_{12}$, $Z_{21}$, and $Z_{\Delta}$ in $\Num_{\mathrm{prelog}}^3(Y)$ are the specialisations of $(point) \times V$, $V\times (point)$, (line)$\times$(hyperplane section), (hyperplane section)$\times$(line), and the diagonal, respectively.
\end{proposition}

\begin{proof}
The specialisation map is given as follows: the cycles in the statement of the Proposition give relative cycles in the family with central fibre removed; to get the specialisation for each of those cycles, we take its closure and intersect it with the central fibre.

Consider the cycle $(point) \times V$. Specialising the point to a point in $L\simeq \P^3$ away from $S$, gives the image of $Z_{03}$; for the numbering of the components it is helpful to look back at Figure \ref{ProductConfiguration}. 
For $Z_{30}$, $Z_{12}$, $Z_{21}$ the argument is then similar. That $Z_{\Delta}$ is the specialisation of the diagonal can be seen as follows. The class
\[
sl+sL+Sl+SL
\]
is the class of the diagonal on $Y_4=Q\times Q$ whereas 
\[
h^3 + h^2H + hH^2 + H^3 - D
\]
is the class of the diagonal on $L_C\times L_C$: indeed, $h^3 + h^2H + hH^2 + H^3$ is the class of $\Delta_{\P^3}$ on $\P^3\times \P^3$, and $L_C\times L_C$ is obtained from $\P^3\times \P^3$ by first blowing up $C\times \P^3$ and then $L_C\times C$. The diagonal intersects $C\times \P^3$ in $\Delta_C\subset C\times C$, and hence as schemes
\[
\sigma^{-1} (\Delta_{\P^3}) = \Delta_{L_C} \cup D
\]
where $\sigma \colon L_C\times L_C\to \P^3\times \P^3$ is the composition of the two blowups. Since $D$ is three-dimensional, we have $\sigma^* ([\Delta_{\P^3}]) = [\Delta_{L_C}]+  D$ as cycle classes. To justify the components of $Z_{\Delta}$ on $Y_2$ and $Y_3$, note that the diagonal in the product family $\XX\times_B \XX \to B$ intersects  $S\times S\subset X\times X$ in $\Delta_S$. Note also that $S\times S$ is precisely the locus in which the total space $\XX\times_B \XX$ is singular. The class of $\Delta_S$ in $S\times S$ is
\[
r_1r_2+R_1r_2+ r_1R_2 + R_1R_2
\]
and pulling this back to $Y_{23}$ and pushing forward to $Y_2$ and $Y_3$ we obtain the middle two entries in $Z_{\Delta}$.
\end{proof}

We now calculate the saturated numerical prelog Chow group in degree $3$. 

\begin{theorem}\label{tSaturatedPrelogY}
$\Num_{\mathrm{prelog, sat}}^3 (Y)\simeq \Z^6$ and is generated by the classes in Theorem \ref{tGeneratorsY} and a half of their sum. 
\end{theorem}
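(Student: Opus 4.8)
The plan is to unwind Definition~\ref{dBothmer}: by definition $\Numsatprelog^3(Y)$ is the saturation of $\Chowprelognum(Y)$ --- in degree~$3$, of $\Num^3_{\mathrm{prelog}}(Y)$ --- inside the lattice $\coker(\delta)/(\mathrm{torsion})$. Since the saturation of a finitely generated subgroup of a free abelian group is again free of the same rank, Theorem~\ref{tCubicThreefoldDegeneration}, which gives $\Num^3_{\mathrm{prelog}}(Y)\cong\Z^6$ modulo torsion, immediately yields $\Numsatprelog^3(Y)\cong\Z^6$. The real content is thus the explicit generating set, and it is a finite computation with the integer matrices already assembled for the proof of Theorem~\ref{tCubicThreefoldDegeneration}.

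I would work in the concrete model from that proof: $\coker(\delta)/(\mathrm{torsion})\cong\Z^{17}$, and $\Num^3_{\mathrm{prelog}}(Y)$ modulo torsion is the rank~$6$ sublattice $\Lambda:=\im(M)\subset\Z^{17}$, the image of the explicit $17\times 17$ integer matrix $M$ from the proof of Theorem~\ref{tCubicThreefoldDegeneration}; by Theorem~\ref{tGeneratorsY} the six cycles $Z_{03},Z_{30},Z_{12},Z_{21},Z_\Delta,Z_D$ furnish an explicit $\Z$-basis of $\Lambda$. First I would compute the Smith normal form of the $17\times 6$ integer matrix whose columns are the coordinate vectors of these six cycles; I expect the elementary divisors to be $1,1,1,1,1,2$, so that $\Lambda^{\mathrm{sat}}/\Lambda\cong\Z/2$ and in particular $[\Lambda^{\mathrm{sat}}:\Lambda]=2$. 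To name the nontrivial coset, put $w:=\tfrac12\bigl(Z_{03}+Z_{30}+Z_{12}+Z_{21}+Z_\Delta+Z_D\bigr)$. Then $2w\in\Lambda$, so $w$ lies in $\Lambda^{\mathrm{sat}}$ as soon as $w$ is an integral vector of $\Z^{17}$, i.e.\ the sum of the six coordinate vectors is even in every entry --- a direct check. Moreover $w\notin\Lambda$, since the coefficient vector $(1,1,1,1,1,1)$ is not divisible by~$2$ in $\Z^6\cong\Lambda$. As the index is exactly~$2$, this forces $\Lambda^{\mathrm{sat}}=\Lambda+\Z w$, which is precisely the assertion. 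All of these are elementary linear-algebra-over-$\Z$ verifications and are carried out in the Macaulay2 script \cite{BBG-M2}.

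The one point that is special to this geometry rather than formal is the Smith normal form of $M$ --- equivalently, that the saturation is obtained from $\Lambda$ by adjoining a single half-class and nothing more (no further $2$-divisibility, and no contribution from odd primes). This is the lattice-theoretic shadow of the characteristic-$2$ anomaly already met in the proof of Theorem~\ref{tCubicThreefoldDegeneration}, where $\delta$ and $\rho$ drop rank precisely at $p=2$; pinning down exactly which half-integral combination of the $Z$'s survives in the saturation is what the explicit matrices in \cite{BBG-M2} settle.
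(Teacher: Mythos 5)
Your argument is correct and is essentially the paper's own proof: both reduce to the explicit $17\times 6$ integer matrix of the six generators inside $\Z^{17}$, show the lattice they span has index $2$ in its saturation (your Smith normal form $1,1,1,1,1,2$ is equivalent to the paper's statement that the gcd of the maximal minors is $2$), and identify the missing class as half the sum of the generators via the mod-$2$ kernel, with the computation delegated to the same Macaulay2 script \cite{BBG-M2}.
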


\begin{proof}
The group $\Num_{\mathrm{prelog, sat}}^3 (Y)$ is the saturation of $\Num_{\mathrm{prelog}}^3 (Y)$ in the lattice $\mathrm{coker}\,\delta / (torsion)\simeq \Z^{17}$. The calculation is done in \cite[\textsf{prelogY.m2} using \textsf{prelogYparts.m2}]{BBG-M2}: writing the generators given in Theorem \ref{tGeneratorsY} in the standard basis of $\Z^{17}$ gives a $17\times 6$ matrix $N$. The gcd of its maximal minors is equal to $2$. Therefore, $N$ has full rank in every characteristic except $2$. Moreover, in characteristic $2$, $N$ has rank $5$. The kernel of $N$ in characteristic $2$ is generated by the sum of the $6$ generators of Theorem \ref{tGeneratorsY}. 
\end{proof}

\providecommand{\bysame}{\leavevmode\hbox to3em{\hrulefill}\thinspace}
\providecommand{\href}[2]{#2}

\end{document}